\theoremstyle{plain}
\newtheorem{theorem}{Theorem}[section]
\newtheorem*{maintheorem}{Main Theorem}
\newtheorem{lemma}[theorem]{Lemma}
\newtheorem{proposition}[theorem]{Proposition}
\newtheorem{corollary}[theorem]{Corollary}
\theoremstyle{definition}
\newtheorem{conjecture}[theorem]{Conjecture}
\theoremstyle{remark}
\newtheorem*{remark}{Remark}
\DeclareMathOperator{\Ker}{Ker}
\DeclareMathOperator{\Coker}{Coker}
\DeclareMathOperator{\Ima}{Im}
\DeclareMathOperator{\Gal}{Gal}
\DeclareMathOperator{\Sel}{Sel}
\DeclareMathOperator{\disc}{disc}
\DeclareMathOperator{\rk}{rank}
\DeclareMathOperator{\ord}{ord}
\newcommand{\calC}{\mathcal{C}}
\newcommand{\frakp}{\mathfrak{p}}
\newcommand{\frakq}{\mathfrak{q}}
\newcommand{\CC}{\mathbf{C}}
\newcommand{\RR}{\mathbf{R}}
\newcommand{\QQ}{\mathbf{Q}}
\newcommand{\ZZ}{\mathbf{Z}}
\newcommand{\FF}{\mathbf{F}}
\newcommand{\sqfr}[1]{#1^\times / #1^{\times 2}}
\newcommand{\torgp}{E(\QQ)_\mathrm{tors}}
\newcommand{\quadsym}[2]{\left( \dfrac{#1}{#2} \right)}
\newcommand{\lp}{\left(}
\newcommand{\rp}{\right)}
\newcommand{\lbr}{\left\{}
\newcommand{\rbr}{\right\}}
\DeclareFontFamily{U}{wncy}{}
\DeclareFontShape{U}{wncy}{m}{n}{<->wncyr10}{}
\DeclareSymbolFont{mcy}{U}{wncy}{m}{n}
\DeclareMathSymbol{\Sh}{\mathord}{mcy}{"58}
\title{On a conjecture of Gross and Zagier}
\author{Dongho Byeon, Taekyung Kim, and Donggeon Yhee}
\date{\today}
\thanks{The first author was supported by Basic Science Research Program through the National Research Foundation of Korea (NRF) funded by the Ministry of Education (NRF-2013R1A1A2007694). The second author was partially supported by Global Ph. D. Fellowship Program through the National Research Foundation of Korea (NRF) funded by the Ministry of Education (grant number 2011-0007588). The third author was supported by Basic Research Program through the National Research Foundation of Korea(NRF) funded by the Ministry of Education (2013053914).}
\subjclass[2010]{11G05}
\keywords{elliptic curves, isogeny, optimal curves, Gross--Zagier conjecture, Birch and Swinnerton-Dyer conjecture, Mordell--Weil Groups, Tamagawa numbers, Manin constant, descent on elliptic curves}
\begin{document}

\begin{abstract}
Let $E$ be an elliptic curve defined over $\QQ$ of conductor $N$, let $M$ be the Manin constant of $E$, and $C$ be the product of local Tamagawa numbers of $E$ at prime divisors of $N$. Let $K$ be an imaginary quadratic field in which each prime divisor of $N$ splits, $P_K$ be the Heegner point in $E(K)$, and $\Sh(E/K)$ be the Tate--Shafarevich group of $E$ over $K$. Also, let $2u_K$ be the number of roots of unity contained in $K$. In \cite{GZ}, Gross and Zagier conjectured that if $P_K$ has infinite order in $E(K)$, then the integer $u_K \cdot C \cdot M \cdot \lp \# \Sh(E/K) \rp^{1/2}$ is divisible by $\#\torgp$. In this paper, we show that this conjecture is true.
\end{abstract}

\maketitle

\tableofcontents

\section{Introduction}

The goal of this paper is to prove a conjecture made by Gross and Zagier in \cite{GZ} concerning certain divisibility among arithmetic invariants of elliptic curves. This gives a theoretical evidence to the ``strong form'' of Birch and Swinnerton-Dyer conjecture, predicting that the leading coefficient of the Hasse--Weil $L$-function of an elliptic curve encodes some precise arithmetic invariants of the curve.

In \cite{GZ}, Gross and Zagier gave a formula for the first derivative at $s=1$ of $L$-series of certain modular forms. In particular, they transferred the formula to the realm of $L$-functions of elliptic curves. So let $E$ be an elliptic curve defined over $\QQ$ with conductor $N$. For a negative square-free integer $d$, we consider the quadratic twist $E_d$ of $E$ which is in general \emph{not} isomorphic to $E$ over $\QQ$ but becomes isomorphic over the imaginary quadratic field $K = \QQ(\sqrt{d})$. We denote the discriminant of $K$ over $\QQ$ by $\disc(K)$ which is equal to $d$ when $d \equiv 1 \pmod{4}$ and to $4d$ otherwise. We also assume a close relation between $E$ and $K$ in such a way that each prime number dividing $N$ splits completely in $K$. This is called the \emph{Heegner condition} or \emph{Heegner hypothesis} in the literature, which we assume throughout this paper. The corresponding $L$-functions are also strongly related: we have $L(E/K, s) = L(E/\QQ,s) \cdot L(E_d/\QQ,s)$. By computing root numbers, the Heegner condition forces that $L(E/K,1)=0$. Throughout this paper, we use the following notations.
\begin{itemize}
    \item $N$ is the conductor of $E$.
    \item $\omega$ is the \emph{Néron differential} of $E$ over $\QQ$ and $\| \omega \|^2 := \int_{E(\CC)} | \omega \wedge \bar{\omega} |$ is the complex period.
    \item $\hat{h}$ is the \emph{Néron--Tate height} attached to $E$.
    \item $M$ is the \emph{Manin constant} of $E$, i.e., if $f$ is the newform attached to $E$ and $\pi: X_0(N) \to E$ is a modular parametrisation, then $M$ is the ratio satisfying $\pi^* \omega = M \cdot 2 \pi i f(\tau) d\tau$. We have $M \in \QQ^\times$ and a famous conjecture of Y. Manin is that $M = 1$ for all \emph{strong Weil curves} $E$. For general discussions on the constant and current status about the conjecture, see \cite{ARS}.
    \item $P_K \in E(K)$ is the \emph{Heegner point} over $K$. This depends on the elliptic curve and its modular parametrisation chosen.
    \item $2u_K$ is the number of roots of unity contained in the field $K$. $u_K = 1$ for all imaginary quadratic fields $K$ except when $K = \QQ(\sqrt{-1})$ and $K = \QQ(\sqrt{-3})$, in these cases we have $u_K = 2$ and $u_K = 3$ respectively.
    \item $C$ is the \emph{Tamagawa number} of $E$ over $\QQ$ which is defined by the product $C = \prod_{p \mid N} C_p$ of all local Tamagawa numbers.
\end{itemize}
Now the main theorem of Gross and Zagier (\cite{GZ}, Theorem I.6.3) has the following consequence.

\begin{theorem}[\cite{GZ}, Theorem V.2.1]
\begin{equation}\label{eq:GZ_formula}
L'(E/K, 1) = \frac{\| \omega \|^2 \cdot \hat{h}(P_K)}{M^2 \cdot u_K^2 \cdot |\disc(K)|^{1/2}}.
\end{equation}
\end{theorem}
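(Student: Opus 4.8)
The plan is to deduce the displayed formula \eqref{eq:GZ_formula} from the main theorem of Gross and Zagier (\cite{GZ}, Theorem I.6.3), which is phrased in the language of modular forms and Heegner divisors rather than elliptic curves. Let $f = \sum a_n q^n$ be the newform attached to $E$, so that modularity gives $L(E/K, s) = L(f/K, s)$, the base change to $K$ of the $L$-series of $f$. Theorem I.6.3 expresses $L'(f/K, 1)$ in terms of the Néron--Tate height of the $f$-isotypic projection $c_f$ of the Heegner divisor class in $J_0(N) = \Jac(X_0(N))$, with a proportionality constant assembled from the Petersson norm $\langle f, f \rangle$, the factor $u_K^2$ accounting for the automorphisms coming from the units of $\calO_K$, and $|\disc(K)|^{1/2}$; concretely it has the shape
\[
L'(f/K, 1) = \frac{8\pi^2 \langle f, f \rangle}{u_K^2 \, |\disc(K)|^{1/2}} \cdot \langle c_f, c_f \rangle,
\]
the pairing on the right being the canonical height on $J_0(N)$. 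The task is therefore one of translating the two quantities $\langle f, f\rangle$ and $\langle c_f, c_f\rangle$ into the intrinsic invariants $\| \omega \|^2$ and $\hat{h}(P_K)$ of $E$, and of checking that the remaining constants match.

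First I would carry out the period comparison. The Manin constant is defined by pulling the Néron differential back along the modular parametrisation, $\pi^* \omega = M \cdot 2 \pi i f(\tau)\, d\tau$. Computing the complex period by pushing the integral down along $\pi$ (a map of degree $\deg \pi$) and using $|f\,d\tau \wedge \overline{f\,d\tau}| = 2 |f(\tau)|^2\, dx\, dy$ for $\tau = x+iy$, one obtains
\[
\| \omega \|^2 = \int_{E(\CC)} |\omega \wedge \bar\omega| = \frac{1}{\deg \pi} \int_{X_0(N)(\CC)} |\pi^*\omega \wedge \overline{\pi^*\omega}| = \frac{8\pi^2 M^2}{\deg \pi}\, \langle f, f \rangle,
\]
so that $\langle f, f\rangle = \dfrac{\deg \pi \cdot \| \omega \|^2}{8\pi^2 M^2}$. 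Substituting this into the formula above clears the factor $8\pi^2$ and produces $M^2$ in the denominator, exactly as required, at the cost of a spurious factor of $\deg \pi$ in the numerator.

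Next I would run the height comparison to absorb that factor. The morphism $\pi$ sends the Heegner divisor to $P_K$, and the induced map $\pi_* : J_0(N) \to E$ carries $c_f$ to $P_K$. By functoriality of the Néron--Tate height under $\pi_*$ together with the compatibility between the theta polarisation of $J_0(N)$ restricted to the $f$-part and the principal polarisation of $E$, the height $\langle c_f, c_f \rangle$ on $J_0(N)$ and $\hat{h}(P_K)$ on $E$ must differ precisely by $\deg \pi$, i.e.\ $\deg \pi \cdot \langle c_f, c_f \rangle = \hat{h}(P_K)$. This cancels the leftover $\deg \pi$ from the period step and yields $L'(E/K,1) = \| \omega \|^2 \, \hat{h}(P_K) / (M^2 u_K^2 |\disc(K)|^{1/2})$, which is \eqref{eq:GZ_formula}.

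I expect the \emph{main obstacle} to be the height comparison rather than the period comparison. One must pin down the exact normalisation of the pairing used in I.6.3, control how the $f$-isotypic projection interacts with $\pi^*$ and $\pi_*$ and with the two polarisations, and verify that the power of $\deg \pi$ is correct and that no extraneous rational factor (in particular none interacting with the Manin constant) survives. The period comparison, by contrast, is essentially a change-of-variables bookkeeping once $M$ and the degree of $\pi$ are in hand, and the powers of $\pi$ and the factor $u_K$ fall out automatically from the shape of I.6.3.
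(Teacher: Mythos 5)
The paper offers no proof of this statement: it is quoted verbatim from Gross--Zagier as \cite{GZ}, Theorem V.2.1, so there is nothing internal to compare against. Your sketch correctly reproduces the derivation that Gross and Zagier themselves give in \S V.2 of \cite{GZ} from their Theorem I.6.3 --- the period identity $\deg\pi\cdot\|\omega\|^2 = 8\pi^2 M^2\langle f,f\rangle$ coming from $\pi^*\omega = M\cdot 2\pi i f\,d\tau$, together with the fact that $\pi_*$ multiplies N\'eron--Tate heights by $\deg\pi$ on the $f$-isotypic component, so the two factors of $\deg\pi$ cancel exactly as you describe.
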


Now the Birch and Swinnerton-Dyer conjecture comes into the picture. We assume here and thereafter that the Heegner point $P_K$ has infinite order, so that $L'(E/K,1) \neq 0$. For more details for the following conjecture, we refer \cite{AEC}, appendix C.16.

\begin{conjecture}[Birch and Swinnerton-Dyer]\label{conj:BSD}
If $\ord_{s=1} L(E/K, s) = 1$, then the Tate--Shafarevich group $\Sh(E/K)$ of $E$ over $K$ is finite, and $L'(E/K, s) = \mathrm{BSD}_{E/K}$, where
\begin{equation}\label{eq:BSD_formula}
\mathrm{BSD}_{E/K} = \frac{\| \omega \|^2 \cdot C^2 \cdot \hat{h}(P_K) \cdot \# \Sh(E/K) }{ | \disc(K) |^{1/2} \cdot  \left[ E(K) : \ZZ P_K \right]^2}.
\end{equation}
\end{conjecture}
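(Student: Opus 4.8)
The statement is the analytic rank one case of the Birch and Swinnerton-Dyer conjecture for $E$ over $K$, and it splits into a \emph{qualitative} half (finiteness of $\Sh(E/K)$, together with the implicit $\rk E(K) = 1$ that makes the index $[E(K):\ZZ P_K]$ finite) and a \emph{quantitative} half (the exact identity $L'(E/K,1) = \mathrm{BSD}_{E/K}$). The two halves require very different machinery, so I would treat them in turn.

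For the qualitative half I would build directly on the Gross--Zagier formula \eqref{eq:GZ_formula}. The hypothesis $\ord_{s=1} L(E/K,s) = 1$ gives $L'(E/K,1) \neq 0$, and since every factor on the right of \eqref{eq:GZ_formula} except $\hat{h}(P_K)$ is nonzero, this forces $\hat{h}(P_K) \neq 0$; hence $P_K$ has infinite order and $\rk E(K) \geq 1$. Because the Heegner hypothesis makes $P_K$ a genuine Heegner point, I would then invoke Kolyvagin's Euler system of Heegner points: the non-triviality of $P_K$ simultaneously yields the reverse inequality $\rk E(K) \leq 1$ and the finiteness of $\Sh(E/K)$. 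This settles finiteness and shows $\ZZ P_K \subseteq E(K)$ has finite index, so that the right-hand side of \eqref{eq:BSD_formula} is a well-defined finite positive quantity.

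For the quantitative half, comparing \eqref{eq:GZ_formula} with \eqref{eq:BSD_formula} and cancelling the common factors $\|\omega\|^2$, $\hat{h}(P_K)$, and $|\disc(K)|^{1/2}$ shows that the identity $L'(E/K,1) = \mathrm{BSD}_{E/K}$ is equivalent to the purely arithmetic index formula
\begin{equation*}
\left[ E(K) : \ZZ P_K \right]^2 = u_K^2 \, M^2 \, C^2 \cdot \# \Sh(E/K).
\end{equation*}
I would establish this $p$-adically, proving that $\ord_p$ of both sides agree for every prime $p$. For a good ordinary prime $p$ coprime to the relevant conductors, the plan is to combine the anticyclotomic Iwasawa main conjecture --- which equates the characteristic ideal of the Heegner-point Selmer group with a $p$-adic $L$-function --- with a Perrin-Riou style $p$-adic height and $p$-adic Gross--Zagier comparison, so as to read off the $p$-part of $\# \Sh(E/K)$ from the $p$-adic valuation of the Heegner index.

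The main obstacle is uniformity across \emph{all} primes. Kolyvagin's divisibility by itself only bounds the $p$-part of $\# \Sh(E/K)$ up to a fixed power, not exactly, and the Iwasawa-theoretic equalities degenerate precisely at the arithmetically delicate primes: $p \in \{2,3\}$, primes of additive or supersingular reduction, and the primes dividing $N$, $M$, $C$, or $\#\torgp$. These are exactly the primes through which the correction factors $u_K$, $M$, and $C$ enter the index formula above, so controlling their contribution --- rather than absorbing it into an unspecified constant --- is the crux of the argument, and it is the step I expect to resist a clean resolution.
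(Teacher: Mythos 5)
The statement you are trying to prove is not a theorem of the paper at all: it is Conjecture~\ref{conj:BSD}, the rank-one case of the Birch and Swinnerton-Dyer conjecture over $K$, and the paper offers no proof of it. It is used only as an input --- equating \eqref{eq:GZ_formula} with \eqref{eq:BSD_formula} is how Gross and Zagier \emph{derived} Conjecture~\ref{conj:GZ_strong_conjecture}, and the paper's actual contribution is the weak form (Conjecture~\ref{conj:GZ_conjecture}), proved by entirely different, unconditional means (Tamagawa number computations, Kramer's and Cassels' formulas, isogeny invariance). So there is no ``paper's own proof'' to compare against, and any complete proof you produced would be a major new theorem, not a reconstruction.

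Judged on its own terms, your proposal correctly identifies the qualitative half as known (Gross--Zagier forces $\hat h(P_K)\neq 0$, and Kolyvagin then gives $\rk E(K)=1$ and finiteness of $\Sh(E/K)$ --- this is exactly the remark after Conjecture~\ref{conj:BSD} citing \cite{Kol}). But the quantitative half is where the proof fails, and you concede this yourself: the reduction to the index formula $[E(K):\ZZ P_K]^2 = u_K^2 M^2 C^2 \cdot \#\Sh(E/K)$ is just a restatement of the strong Gross--Zagier conjecture, and the Iwasawa-theoretic program you sketch establishes $\ord_p$ of both sides only for good ordinary primes $p$ satisfying various large-image and non-degeneracy hypotheses. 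The primes $p=2$, $p=3$, primes of additive reduction, and primes dividing $N$, $M$, $C$, or $\#\torgp$ are not covered by any existing argument, and these are precisely the primes the paper's Main Theorem is about. A proof that works ``for all but finitely many explicitly problematic primes'' is not a proof of the identity. The honest conclusion is that your proposal is a survey of partial results toward an open conjecture, not a proof of the statement.
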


\begin{remark}
In the literature, the factor $C^2$ in the right hand side of the equation \eqref{eq:BSD_formula} is replaced by the Tamagawa number of $E$ over the extension $K$. However, by the Heegner hypothesis, any prime $p$ dividing $N$ splits in $K$ like $p = \frakp \overline{\frakp}$, and thus the number is equal to the square $C^2$ of the Tamagawa number of $E$ over $\QQ$.
\end{remark}

\begin{remark}
The Tate--Shafarevich group $\Sh(E/K)$ is in fact finite in this case (cf. Theorem 5 in \cite{Kol}).
\end{remark}

Equating the above two formulae \eqref{eq:GZ_formula} and \eqref{eq:BSD_formula}, Gross and Zagier obtained the following conjecture.

\begin{conjecture}[\cite{GZ}, Conjecture V.2.2, Strong Gross--Zagier Conjecture]\label{conj:GZ_strong_conjecture}
If $P_K$ has infinite order in $E(K)$, then $\ZZ P_K$ has finite index in $E(K)$ and we have
\begin{equation}
[E(K): \ZZ P_K ] = u_K \cdot C \cdot M \cdot \lp \# \Sh(E/K) \rp^{1/2}.
\end{equation}
\end{conjecture}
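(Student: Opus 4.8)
The plan is to obtain the identity directly from the two formulae for $L'(E/K,1)$ already in hand, after first checking that both sides are well-defined positive integers. For the left-hand side I would invoke Kolyvagin's theorem (\cite{Kol}): the hypothesis that $P_K$ has infinite order forces $E(K)$ to have Mordell--Weil rank exactly $1$ and $\Sh(E/K)$ to be finite, so $\ZZ P_K$ is a subgroup of finite index and $[E(K):\ZZ P_K]$ is a genuine positive integer. For the right-hand side, $u_K\in\{1,2,3\}$ and $C$ are positive integers, the Manin constant $M$ is a positive integer (see \cite{ARS}), and $\# \Sh(E/K)$ is a perfect square because the Cassels--Tate pairing on the finite group $\Sh(E/K)$ is alternating and nondegenerate; hence $u_K \cdot C \cdot M \cdot \lp \# \Sh(E/K) \rp^{1/2}$ is a well-defined positive integer and the assertion is an equality of integers.

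The core of the argument is then to equate the Gross--Zagier formula \eqref{eq:GZ_formula} with the Birch and Swinnerton-Dyer formula \eqref{eq:BSD_formula}, both of which evaluate the same nonzero number $L'(E/K,1)$:
\begin{equation*}
\frac{\| \omega \|^2 \cdot \hat{h}(P_K)}{M^2 \cdot u_K^2 \cdot |\disc(K)|^{1/2}} = \frac{\| \omega \|^2 \cdot C^2 \cdot \hat{h}(P_K) \cdot \# \Sh(E/K)}{|\disc(K)|^{1/2} \cdot [E(K):\ZZ P_K]^2}.
\end{equation*}
Since $P_K$ has infinite order we have $\hat{h}(P_K)\neq 0$, and the period $\| \omega \|^2$ and $|\disc(K)|^{1/2}$ are nonzero, so these common analytic factors cancel and I am left with $[E(K):\ZZ P_K]^2 = u_K^2 \cdot C^2 \cdot M^2 \cdot \# \Sh(E/K)$. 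Taking positive square roots---legitimate because each factor is positive and $\# \Sh(E/K)$ is a perfect square---gives precisely $[E(K):\ZZ P_K] = u_K \cdot C \cdot M \cdot \lp \# \Sh(E/K) \rp^{1/2}$, the desired equality.

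The decisive obstacle, and the reason the statement is posed as a conjecture, is that one of the two inputs is itself unproven: the exact value \eqref{eq:BSD_formula} predicted by the Birch and Swinnerton-Dyer conjecture (Conjecture \ref{conj:BSD}) is not known. Kolyvagin's Euler system supplies, unconditionally, the rank-one conclusion, the finiteness of $\Sh(E/K)$, and a divisibility relating $\# \Sh(E/K)$ to the $L$-value, but not the sharp equality. Thus the derivation above is best read as a reduction: in this analytic-rank-one Heegner setting the Strong Gross--Zagier identity is \emph{equivalent} to the full refined Birch and Swinnerton-Dyer formula for $E/K$, and any unconditional proof of the displayed equality would have to upgrade Kolyvagin's divisibility to an equality---exactly the part of BSD that remains open.
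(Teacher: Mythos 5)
Your derivation matches exactly how the paper (following Gross--Zagier) arrives at this statement: it is obtained by equating \eqref{eq:GZ_formula} with \eqref{eq:BSD_formula}, cancelling the common nonzero analytic factors, and taking positive square roots, and you correctly observe that this makes the identity conditional on Conjecture \ref{conj:BSD} --- which is precisely why it is stated as a conjecture rather than a theorem. No gap beyond what the paper itself leaves open.
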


As the order of the rational torsion subgroup $\torgp$ clearly divides the index $\left[ E(K) : \ZZ P_K \right]$, they also obtained a weaker version of the conjecture, which we call ``the Gross--Zagier conjecture'' throughout this paper.

\begin{conjecture}[\cite{GZ}, Conjecture V.2.3, Weak Gross--Zagier Conjecture]\label{conj:GZ_conjecture}
If $E(K)$ has analytic rank 1, then the integer $u_K \cdot C \cdot M  \cdot \lp \# \Sh(E/K) \rp^{1/2}$ is divisible by $\# \torgp$.
\end{conjecture}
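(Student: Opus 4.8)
The plan is to reduce the statement to a prime-by-prime valuation inequality and then to feed it through the unconditional isogeny-invariance of the Birch--Swinnerton-Dyer quotient. Set $T := \#\torgp$. Because the torsion injects into $E(K)$ and meets the infinite-order subgroup $\ZZ P_K$ trivially, $T$ divides the index $[E(K) : \ZZ P_K]$; and because the Cassels--Tate pairing makes the finite group $\Sha(E/K)$ of square order, $\lp \#\Sha(E/K) \rp^{1/2}$ is an integer. Writing $v_\ell$ for the $\ell$-adic valuation, it therefore suffices to prove, for every prime $\ell \mid T$,
\[
v_\ell(u_K) + v_\ell(C) + v_\ell(M) + \tfrac{1}{2} v_\ell\lp \#\Sha(E/K) \rp \ \ge\ v_\ell(T).
\]
By Mazur's classification of $\torgp$ only the primes $\ell \in \{2,3,5,7\}$ occur, and then $v_\ell(T) \le 4, 2, 1, 1$ respectively, so the whole argument is a finite case analysis. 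Throughout I use that $M$ is a positive integer whose prime divisors are severely restricted, as recorded in \cite{ARS}; in particular $\ell \nmid M$ for $\ell \ge 5$.

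The engine driving the estimate is the isogeny attached to a rational torsion point, combined with Cassels' theorem that the BSD quotient is invariant under isogeny. Since $\Sha(E/K)$ is finite by \cite{Kol}, this invariance is an unconditional identity relating $\#\Sha$, the Tamagawa factor, the period, the regulator and the order of torsion for two isogenous curves. Given $P \in \torgp$ of order $\ell$, I form the $\QQ$-isogeny $\phi \colon E \to E' := E/\langle P \rangle$, base-change it to $K$, and compare the BSD quotients of $E$ and $E'$ along $\phi$ and its dual. The ratios of periods and of regulators are explicit powers of $\ell$ determined by the degree of $\phi$ and by the scaling of the N\'eron differential under $\phi$, which is where the Manin constant enters; an isogeny descent then expresses the change in $\#\Sha$ through the local terms at the primes $p \mid N$, that is, through the $\ell$-parts of the Tamagawa numbers. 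The Gross--Zagier formula \eqref{eq:GZ_formula}, set against the conjectural expression \eqref{eq:BSD_formula}, is precisely what isolates the correction factors $u_K$ and $M$, so that the factor of $\ell$ lost from the torsion on passing to $E'$ must reappear in $C$, in $M$, or in $\lp \#\Sha(E/K) \rp^{1/2}$.

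For the odd primes the bookkeeping is comparatively mild. When $\ell \ge 5$ one has $\ell \nmid u_K$, $\ell \nmid M$ and $v_\ell(T) \le 1$, so only a single factor of $\ell$ is needed, and the isogeny descent supplies it from $C$ or, failing that, from $\lp \#\Sha(E/K) \rp^{1/2}$. For $\ell = 3$, where $v_3(T)$ may equal $2$, one has in addition the contribution $v_3(u_K) = 1$ for $K = \QQ(\sqrt{-3})$ and the possible $3$-divisibility of $M$, and these together with the Tamagawa and Tate--Shafarevich terms account for both factors; the field $\QQ(\sqrt{-1})$, with $v_2(u_K) = 1$, plays the analogous auxiliary role at $\ell = 2$.

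The prime $\ell = 2$ is the genuine obstacle. Here $\torgp$ may be as large as $\ZZ/2 \times \ZZ/8$, so as many as four factors of $2$ must be accounted for at once among $u_K$, $C$, $M$ and $\lp \#\Sha(E/K) \rp^{1/2}$, and no single invariant can carry them. The argument demands a full $2$-descent over $K$: the Cassels--Tate pairing forces $v_2\lp \#\Sha(E/K) \rp$ to be even, the rational $2$-torsion enlarges the $2$-Selmer group, and the local conditions at the split primes $p \mid N$ tie the Selmer dimension to the $2$-adic valuations of the Tamagawa numbers. The subtlety is that for a curve which is not optimal within its chain of $2$-isogenies the missing powers of $2$ migrate into the Manin constant, so the $2$-adic accounting must be carried out coherently across the whole isogeny class rather than one curve at a time. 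Establishing the requisite lower bound on $v_2\lp \#\Sha(E/K) \rp$ in the cases of largest $2$-torsion, and reconciling it with the contributions of $C$ and $M$, is where the real difficulty lies.
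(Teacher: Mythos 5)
There is a genuine gap: your proposal is a plan rather than a proof, and its central mechanism cannot close the argument on its own. The ``engine'' you describe --- forming $\phi\colon E \to E' = E/\langle P\rangle$ and invoking the isogeny invariance of the BSD quotient --- only \emph{transfers} the desired divisibility between isogenous curves; it never produces it unconditionally. Concretely, Proposition \ref{prop:isoginv}(c) of the paper shows that if the weak conjecture holds for $E$ and $E(K)_\mathrm{tors}=E(\QQ)_\mathrm{tors}$, then it holds for $E'$; but this requires a base case for which the inequality $\ord_p \#\torgp \le \ord_p\lp u_K\cdot C\cdot M\cdot(\#\Sh(E/K))^{1/2}\rp$ has already been established by other means. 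Your sketch identifies no such base case, and the sentence ``the isogeny descent supplies it from $C$ or, failing that, from $\lp\#\Sh(E/K)\rp^{1/2}$'' simply asserts the conclusion. Your closing paragraph concedes that the lower bound on $v_2(\#\Sh(E/K))$ in the large $2$-torsion cases ``is where the real difficulty lies'' --- which is to say the proof stops exactly where the work begins.

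What the paper actually does, and what is absent from your proposal, is the following. The torsion types with $k\ge 5$ a point of order $k$ are dispatched by Lorenzini's explicit divisibility results $k\mid C$ (with finitely many listed exceptions checked by hand), not by any isogeny argument. For $\torgp\simeq\ZZ/2\oplus\ZZ/2$, $\ZZ/2\oplus\ZZ/4$, $\ZZ/2\oplus\ZZ/6$, the divisibility $\#\torgp\mid C$ is extracted by running Tate's algorithm over Kubert's explicit parametrisations. For $\ZZ/2$ and $\ZZ/4$, where $C$ alone is insufficient, the paper produces a nontrivial element of $\Sh(E/K)[2]$ via Kramer's formula $\dim_{\FF_2}\Sh(E/K)[2]=\sum i_\ell+\dim\Phi+\dim NS'-\rk E(K)-2\dim E(\QQ)[2]$, which requires computing local norm indices and the everywhere-local norm group $\Phi=\Sel^2(E/\QQ)\cap\Sel^2(E_d/\QQ)$ by explicit $2$-descent; isogeny invariance is used only for residual exceptional families (e.g.\ $A^2+4$ prime) whose $2$-isogenous partners have full $2$-torsion and hence are already covered. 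For $\ZZ/3$, the paper needs Cassels' Selmer-ratio formula together with Hadano's equation for $E/T$ to bound $\dim_{\FF_3}\Sel^3(E/K)$ from below, and --- crucially --- the Byeon--Yhee and Vatsal theorems on optimal curves to force $3\mid M$ in the remaining cases. None of this machinery, nor any substitute for it, appears in your proposal; in addition, your claim that $\ell\nmid M$ for $\ell\ge 5$ is not justified for non-optimal curves, whose Manin constants can absorb the isogeny degree (the paper itself exploits $3\mid M$ for non-optimal curves in its $\ZZ/3$ argument).
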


Rational torsion subgroups of elliptic curves $E$ over $\QQ$ are completely  classified by Mazur \cite{Ma78}: $\torgp$ is isomorphic to one of the following groups:
\begin{equation*}
\begin{cases}
\ZZ/n\ZZ & \text{ for } 1 \le n \le 10,\; n=12, \\
\ZZ/2\ZZ \oplus \ZZ/n\ZZ& \text{ for } n = 2, 4, 6, 8.
\end{cases}
\end{equation*}
In \cite{Lo}, Lorenzini obtained the following theorem.

\begin{theorem} [\cite{Lo}, Proposition 1.1]
Let $E$ be an elliptic curve defined over $\QQ$ with a $\QQ$-rational point of order $k$. Then the following statements hold with at most five explicit exceptions for a given $k$. The exceptions are given by their labels in Cremona's table \cite{Cr}.
\begin{enumerate}
    \item If $k=4$, then $2 \mid C$, except for `15a7', `15a8', and `17a4'.
    \item If $k=5,6$, or $12$, then $k \mid C$, except for
`11a3', `14a4', `14a6', and `20a2'.
    \item If $k =7,8$, or $9$, then $k^2 \mid C$, except for `15a4', `21a3', `26b1', `42a1', `48a6', `54b3', and `102b1'.
    \item If $k=10$, then $50 \mid C$.
\end{enumerate}
Without exception, $k \mid C$ if $k=7,8,9,10$ or $12$.
\end{theorem}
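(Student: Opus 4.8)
The plan is to exploit the fact that for each relevant order $k \in \{4,5,6,7,8,9,10,12\}$ the modular curve $X_1(k)$ has genus $0$, so that elliptic curves over $\QQ$ carrying a rational point of order $k$ are parametrised by a single rational parameter. Concretely, I would put $E$ in Tate normal form
\[E \colon\quad y^2 + (1-c)xy - by = x^3 - bx^2, \qquad P = (0,0),\]
in which a point of order $\ge 4$ sits at the origin, and impose the relation on $(b,c)$ that forces $\ord(P)=k$ exactly; for each $k$ in the list this relation defines a rational curve and yields an explicit parametrisation $(b,c) = (b(t),c(t))$ with $t \in \QQ$ outside a finite set. The discriminant $\Delta(t)$ is then an explicit element of $\QQ(t)$, and the whole problem becomes: for a minimal model of $E_t$, control the Kodaira type at each prime $p \mid \Delta(t)$ and show the product $C = \prod_p C_p$ is divisible by the asserted quantity.

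The key local input is a purely Néron-model statement. First I would record that for a prime $p \nmid k$ the exact sequence $0 \to E_0(\QQ_p) \to E(\QQ_p) \to \Phi_p(\FF_p) \to 0$ sends the $k$-torsion point $P$ to an element $\bar P$ of order dividing $k$, while the filtration of $E_0(\QQ_p)$ by the (pro-$p$) formal group shows that the prime-to-$p$ torsion of $E_0(\QQ_p)$ injects into $\tilde E_{\mathrm{ns}}(\FF_p)$, hence into a cyclic group of order $p-1$ in the split multiplicative case. Consequently, at a prime of multiplicative reduction of type $I_n$ one has the dichotomy: either $P$ maps to a point of $\Phi_p \cong \ZZ/n\ZZ$ of order $d \mid \gcd(k,n)$, whence $d \mid C_p$, or $P$ lands in the identity component, forcing $k \mid p-1$ (resp. $p+1$ in the non-split case). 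For the additive types one lists the possible component groups $\Phi_p$ (of order $1,2,3$, or $4$) and reads off the analogous constraints. This lemma is what converts the mere existence of a point of order $k$ into divisibility of the local Tamagawa numbers.

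With the engine in place I would run it through each universal family. For fixed $k$, Tate's algorithm applied to $E_t$ determines, as a function of $t$, which primes $p \mid \Delta(t)$ carry multiplicative reduction and with what $n$; the valuation $v_p(\Delta(t))$ together with the shape of the parametrisation forces $\bar P$ to have order exactly $k$ (resp. a prescribed power) in the relevant component group, giving $k \mid C$, $k^2 \mid C$, or $50 \mid C$ as required. The appearance of $k^2$ for $k=7,8,9$ reflects that $\Delta(t)$ carries the governing factor to a power large enough to produce $I_n$ with $k^2 \mid n$, or two distinct primes each contributing a factor $k$; the value $50 = 2 \cdot 5^2$ for $k=10$ reflects the combination of the order-$2$ and order-$5$ subgroups, the $5$-part being pushed from $5$ up to $5^2$ by the presence of the extra $2$-torsion. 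The last clause ($k \mid C$ for $k=7,8,9,10,12$) is then formally weaker, since $k \mid k^2$ and $10 \mid 50$.

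The exceptions arise precisely at the finitely many rational specialisations of $t$ where the generic behaviour degenerates: either the member $E_{t_0}$ acquires good reduction at the expected bad prime, or the relevant prime divides $k$ so the formal-group injection above no longer applies, or the Kodaira type drops so that the component group is too small to receive $\bar P$. Since each defect is governed by the vanishing or $p$-divisibility of an explicit polynomial in $t$, only finitely many $t_0$ can occur for each $k$, and one identifies them by direct computation and matches them to the curves in Cremona's tables, producing the explicit lists of exceptions and checking the residual divisibility ($k \mid C$) on each by hand. I expect the genuine difficulty to be twofold: the local analysis at primes $p \mid k$, where $P$ can enter the formal group and the clean dichotomy fails, and the bookkeeping needed both to pin down the \emph{exact} power of $k$ dividing $C$ and to certify that the exceptional set has been found in its entirety.
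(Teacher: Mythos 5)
The paper does not prove this statement: it is imported verbatim from Lorenzini (\cite{Lo}, Proposition 1.1), so there is no internal proof to compare against. That said, your outline is essentially the strategy that \cite{Lo} actually follows, and it is also the template the present paper uses for the torsion groups it does handle in \S 3--\S 9: put the curve in Tate normal form / Kubert's parametrisation (possible since $X_1(k)$ has genus $0$ for the relevant $k$), run Tate's algorithm on the resulting one-parameter family, and convert the existence of the rational $k$-torsion point into divisibility of $C_p$ via its image in the component group $\Phi_p$ (the paper's Lemma \ref{lem:torsionpoints_generating_component_group} is exactly this device, and the injectivity of prime-to-$p$ torsion of $E_0(\QQ_p)$ into $\tilde{E}_{\mathrm{ns}}(\FF_p)$ is the standard input you correctly identify). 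The one caveat is that your proposal is genuinely only an outline: all of the substance of Lorenzini's result lives in the case analysis you defer to ``direct computation'' --- in particular the certification that the exceptional lists are complete, the extraction of the full $k^2$ (rather than just $k$) for $k=7,8,9$, and the local analysis at primes dividing $k$ where the formal-group argument degenerates. As written, the proposal would not by itself constitute a proof, but as a description of the method behind the cited theorem it is accurate.
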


For the exceptions of above proposition, we can check that $\# \torgp $ divides $C \cdot M$, except for `15a7', which is considered in \S \ref{section:torgp_type_4}. So the only remaining cases for the validity of the conjecture are those when $\torgp$ is isomorphic to the following 6 groups: $\ZZ/2\ZZ$, $\ZZ/3\ZZ$, $\ZZ/4\ZZ$, $\ZZ/2\ZZ \oplus \ZZ/2\ZZ$, $\ZZ/2\ZZ \oplus \ZZ/4\ZZ$, and $\ZZ/2\ZZ \oplus \ZZ/6\ZZ$. Our goal here is to prove these remaining cases, thus to complete the proof of the conjecture.

\begin{maintheorem}
Let $E$ be an elliptic curve defined over $\QQ$ such that the rational torsion subgroup $\torgp$ is isomorphic to one of the 6 groups: $\ZZ/2\ZZ$,
$\ZZ/3\ZZ$, $\ZZ/4\ZZ$, $\ZZ/2\ZZ \oplus \ZZ/2\ZZ$, $\ZZ/2\ZZ \oplus \ZZ/4\ZZ$, and $\ZZ/2\ZZ \oplus \ZZ/6\ZZ$. Let $K$ be an imaginary quadratic field such that $E(K)$ is of (analytic) rank 1 and that $K$ satisfies the Heegner condition. Then the conjecture \ref{conj:GZ_conjecture} is true, i.e., $\#\torgp$ divides $C \cdot M \cdot u_k \cdot \lp \# \Sh(E/K) \rp^{1/2}$.
\end{maintheorem}

From now on, $E$ always denotes an elliptic curve defined over $\QQ$ having torsion subgroup isomorphic to one of the above 6 groups, and $K$ is always an imaginary quadratic field such that $\ord_{s=1} L(E/K, s) = 1$ and that $K$ satisfies the Heegner hypothesis.

Let us briefly explain how to prove the Main Theorem. The present article is divided into two parts. The first part (\S \ref{section:preliminaries_part_1} $\sim$ \S \ref{section:torgp_type_2}) is dealing with the case that $\torgp$ has order a power of 2. When $\torgp$ contains full 2-torsion subgroup $E[2]$, i.e., when $\torgp \simeq \ZZ/2\ZZ \oplus \ZZ/2\ZZ$, or $\ZZ/2\ZZ \oplus \ZZ/4\ZZ$, the situations are a lot easier than the other cases, and we can prove the Main Theorem by computing Tamagawa numbers using Tate's algorithm (\S \ref{section:torgp_type_2_4} and \S \ref{section:torgp_type_2_2}). For the other cases, i.e., when $\torgp \simeq \ZZ/2\ZZ$ or  $\ZZ/4\ZZ$ , there are curves having Tamagawa numbers not divisible by $\# \torgp$, so we need to compute the size of the 2-torsion part of the Tate--Shafarevich groups over $K$ using Kramer's formula. There are some `exceptional families' for which $C \cdot \lp \# \Sh(E/K) \rp^{1/2}$ does not have enough power of 2. For these cases, we avoid difficulties by considering isogeny invariance of the Gross--Zagier conjecture. Kramer's formula and the isogeny invariance are located at the heart of techniques in the proof, so in the preliminary section \S \ref{section:preliminaries_part_1} we give sufficient background to these techniques.

The second part (\S \ref{section:preliminaries_part_2} $\sim$ \S \ref{section:torgp_type_3}) is devoted to the case in which $\torgp$ has a rational torsion point of order 3. When $\torgp \simeq \ZZ/2\ZZ \oplus \ZZ/6\ZZ$, we can prove the Main Theorem by computing only Tamagawa numbers (\S \ref{section:torgp_type_2_6}). But when $\torgp \simeq \ZZ/3\ZZ$, there are also curves having Tamagawa numbers not divisible by $\# \torgp$, so we need to compute the lower bound of the size of the 3-torsion part of Tate--Shafarevich groups over $K$ using Cassels' formula or need to compute the Manin constants using the phenomenon that optimal curves differ by a 3-isogeny (\S \ref{section:torgp_type_3}). Same as Part 1, preliminaries are summarised in \S \ref{section:preliminaries_part_2}.

All expicit computations in this paper were done using Sage Mathematics Software \cite{sagemath}. When we do computations with Weierstrass equations, we frequently change the variables of an equation to obtain another. In particular, when we use the clause ``make a change of variables via $[u,r,s,t]$'', it should be understood to take the change of variables formula given by
\begin{equation*}
x = u^2 x' + r \quad \text{and} \quad y = u^3 y' + u^2 s x'+ t.
\end{equation*}
For the details, we refer \cite{AEC}, \S III.1.

\part{$E(\QQ)_\mathrm{tors}$ has order a power of 2}

\section{Preliminaries for Part 1}
\label{section:preliminaries_part_1}

\subsection{Kramer's formula}
\label{subsection:Kramer}

In this subsection we introduce a formula of Kramer \cite{Kr}, and discuss how to measure the size of the Tate--Shafarevich group of elliptic curve using it. Of course the purpose of this section is to provide a tool to show the Main Theorem for the cases $\torgp \simeq \ZZ/2\ZZ$ or $\ZZ/4\ZZ$. Thus, throughout this subsection, we assume $\torgp \simeq \ZZ/2\ZZ$ or $\ZZ/4\ZZ$, and consequently $E(\QQ)[2] \simeq \ZZ/2\ZZ$.

Since the Tate--Shafarevich group $\Sh(E/K)$ is finite (Theorem 5 in \cite{Kol}), its 2-primary part $\Sh(E/K)[2^\infty]$ has perfect square order. So if we find a non-trivial element in $\Sh(E/K)[2]$, (or equivalently $\dim_{\FF_2} \Sh(E/K)[2] \ge 1$), we can immediately see that $2 \mid \lp \# \Sh(E/K) \rp^{1/2}$. So in this subsection, we are concentrating on how to find such a non-trivial element.

Let $p$ be a prime number. We use the following notations.
\begin{itemize}
    \item $i_p = \dim_{\FF_2} \Coker N = \dim_{\FF_2} E(\QQ_p)/NE(K_\frakp)$, where $N: E(K_\frakp) \to E(\QQ_p)$ is the \emph{norm map}. This quantity is called \emph{local norm index} of $E$ at $p$.
    \item Let
    \begin{equation*}
    \Phi = \left\{ \xi \in \Sel^2(E/\QQ) : \xi \in N_p\left( \prod_{\frakp \mid p} \Sel^2(E/K_\frakp) \right) \right\}.
    \end{equation*}
    This group is called the \emph{everywhere-local norm group}.
    \item $NS'$ is the image of the norm map $\Sel^2(E/K) \to \Sel^2(E/\QQ)$, which we do not need in this paper.
\end{itemize}

\begin{theorem}[\cite{Kr}, Theorem 1] \label{th:Kr}
The dimension of $\Sh (E/K)[2]$ (over $\FF_2$) is equal to
\begin{equation*}
\sum i_\ell + \dim_{\FF_2} \Phi + \dim_{\FF_2} NS' - \rk E(K) - 2 \dim_{\FF_2} E(\QQ)[2],
\end{equation*}
where the sum is taken over all primes (including infinity) of $\QQ$.
\end{theorem}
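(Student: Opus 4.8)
The plan is to compare the $2$-Selmer groups of $E$ over $K$ and over $\QQ$ through the action of the group $G = \Gal(K/\QQ) = \langle \sigma \rangle \cong \ZZ/2\ZZ$, the two decisive inputs being the Hochschild--Serre (inflation--restriction) machinery for $K/\QQ$ and Poitou--Tate global duality for the self-dual module $E[2]$. First I would record the fundamental descent sequence
\begin{equation*}
0 \to E(K)/2E(K) \to \Sel^2(E/K) \to \Sh(E/K)[2] \to 0,
\end{equation*}
so that $\dim_{\FF_2}\Sh(E/K)[2] = \dim_{\FF_2}\Sel^2(E/K) - \dim_{\FF_2}E(K)/2E(K)$, reducing the theorem to computing these two global dimensions. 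For the second term I would use $\dim_{\FF_2}E(K)/2E(K) = \rk E(K) + \dim_{\FF_2}E(K)[2]$, where the rank splits as $\rk E(K) = \rk E(\QQ) + \rk E_d(\QQ)$ via the $\sigma$-eigenspace decomposition of $E(K)\otimes\QQ$. This already isolates the $-\rk E(K)$ appearing in the final formula, while the $2\dim_{\FF_2}E(\QQ)[2]$ normalization will emerge later from the local terms and the duality pairing.

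Next I would bring in the norm map $N\colon \Sel^2(E/K) \to \Sel^2(E/\QQ)$, whose image is exactly $NS'$ and whose kernel is governed by the Tate cohomology $\hat{H}^1(G, E(K))$ through inflation--restriction. Writing the large commutative diagram whose rows are the defining sequences of $\Sel^2(E/K)$ and $\Sel^2(E/\QQ)$ as kernels of the localization maps into the products of local points, and whose vertical arrows are the global and local norm maps, I would apply the snake lemma. In this computation the cokernels of the local norm maps contribute precisely $\sum_\ell i_\ell$, and the discrepancy between the global kernel and the product of local kernels is, by its very definition, the everywhere-local norm group $\Phi$.

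The local analysis is then carried out place by place: at each prime $\ell$ I would evaluate $i_\ell = \dim_{\FF_2}\Coker\bigl(N\colon E(K_\frakp) \to E(\QQ_\ell)\bigr)$ by a Herbrand-quotient and local-class-field-theory calculation using the explicit structure of $E$ over local fields. This is routine for $\ell$ of good or split reduction, but the archimedean place and the primes of bad or additive reduction (notably $p=2$) demand separate bookkeeping, since it is there that the relevant local pairing degenerates. Finally I would invoke Poitou--Tate duality for $E[2]$ to identify the global-to-local kernel produced by the snake lemma with $\Phi$, and assemble the alternating sum of all the dimensions into a single Euler-characteristic identity, which after cancellation yields
\begin{equation*}
\dim_{\FF_2}\Sh(E/K)[2] = \sum_\ell i_\ell + \dim_{\FF_2}\Phi + \dim_{\FF_2}NS' - \rk E(K) - 2\dim_{\FF_2}E(\QQ)[2].
\end{equation*}

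The hard part will be the global duality step. Matching the cohomological kernel coming from the snake lemma with the everywhere-local norm group $\Phi$ requires the precise reciprocity statement of class field theory, and keeping the exact contribution of every term straight — in particular the $2\dim_{\FF_2}E(\QQ)[2]$ normalization, which collects the degenerate local behaviour at $p=2$ and at the infinite place — is where the counting is most delicate and where an off-by-one error is easiest to commit.
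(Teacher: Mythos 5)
The paper does not prove this statement at all: it is imported verbatim as Theorem 1 of Kramer's article \cite{Kr}, so there is no internal proof to compare yours against. Judged on its own terms, your outline follows the same general strategy Kramer actually uses---comparing $\Sel^2(E/K)$ with $\Sel^2(E/\QQ)$ via the norm map, feeding in the local norm cokernels $i_\ell$, and closing the count with global duality---so the architecture is the right one.

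As written, however, the proposal has a genuine gap precisely at the point you yourself flag as the hard part. The assertion that the discrepancy between the global kernel and the product of local kernels ``is, by its very definition, the everywhere-local norm group $\Phi$'' is not correct as a matter of definition: $\Phi$ is cut out inside $\Sel^2(E/\QQ)$ by the condition of being an everywhere-local norm from the \emph{local Selmer groups}, whereas the snake lemma applied to your diagram produces kernels and cokernels of norm maps on points and on $H^1$'s. Identifying the two, and simultaneously accounting for the $-2\dim_{\FF_2}E(\QQ)[2]$ normalization---which you defer twice, once to ``the local terms and the duality pairing'' and once to the final ``Euler-characteristic identity''---is exactly the content of Kramer's duality propositions; without carrying that step out, the alternating sum cannot be assembled and the formula is not established. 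The local computations of $i_\ell$ at $p=2$, at the archimedean place, and at primes of bad reduction are likewise only announced, not performed. So the proposal is a plausible road map for reconstructing Kramer's argument, but it is not yet a proof.
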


Back to our case. Because $\rk E(K)=1$ and $E(\QQ)[2] \simeq \ZZ/2\ZZ$, by Theorem \ref{th:Kr}, $\dim_{\FF_2} \Sh(E/K)[2] \ge 1$ if and only if the quantity
\begin{equation*}
\sum i_\ell + \dim_{\FF_2} \Phi + \dim_{\FF_2} NS'
\end{equation*}
is greater than or equal to 4.

\subsubsection{Local norm indices}

For general introduction and useful facts about the numbers $i_p$, we refer \S 4 of \cite{Ma72} and \S 2 of \cite{Kr}. We only concern those numbers relevant to our situation. The proof of the following proposition can be found in \S 2 of \cite{Kr}.

\begin{proposition}\label{prop:computing_local_norm_indices}
Let $E$ be an elliptic curve over $\QQ$ with $E(\QQ)[2] \simeq \ZZ/2\ZZ$ and let $K =\QQ(\sqrt{d})$ be an imaginary quadratic field satisfying the Heegner hypothesis. The local norm indices $i_\ell$ for various primes $\ell$ are given as follows.
\begin{enumerate}
    \item One has
$i_\infty = i(\CC\mid\RR) = \begin{cases}
                           0 & \text{ if } \Delta_\mathrm{min} < 0, \\
                           1 & \text{ if } \Delta_\mathrm{min} > 0.
                           \end{cases}$
    \item Let $p$ be an odd prime. If $p$ is a good prime for $E$ and is ramified in $K$, then one has $i_p = \dim_{\FF_2} E[2](k)$, where $k$ is the residue field of $\QQ_p$. Otherwise one has $i_p = 0$.
    \item If $2$ is a good prime for $E$ and is ramified in $K$, then one has $i_2 = \begin{cases}
2 & \text{if $(\Delta_\mathrm{min},d)_{\QQ_2} = +1$,} \\
1 & \text{if $(\Delta_\mathrm{min},d)_{\QQ_2} = -1$,}
\end{cases}$ where $(-,-)_{\QQ_2}$ denotes the Hilbert norm-residue symbol. Otherwise, one has $i_2 =0$.
\end{enumerate}
\end{proposition}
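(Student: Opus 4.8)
The plan is to reduce every case to the computation of a single Tate cohomology group. Writing $G_p = \Gal(K_\frakp/\QQ_p)$ for a prime $\frakp$ of $K$ above $p$, the fixed points of $E(K_\frakp)$ are exactly $E(\QQ_p)$, and the norm map $N$ coincides with the action of the norm element $\sum_{g \in G_p} g$, so by definition
\begin{equation*}
E(\QQ_p)/N E(K_\frakp) = E(K_\frakp)^{G_p}/N E(K_\frakp) = \hat{H}^0(G_p, E(K_\frakp)),
\end{equation*}
and hence $i_p = \dim_{\FF_2}\hat{H}^0(G_p, E(K_\frakp))$. When $p$ splits in $K$ the local extension is trivial, $G_p = 1$, and this group vanishes; in particular every prime of bad reduction splits under the Heegner hypothesis and contributes $i_p = 0$. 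So only the archimedean place and the places that are inert or ramified require work, and for those $G_p = \ZZ/2\ZZ$. I would exploit the $2$-periodicity of Tate cohomology of cyclic groups together with the two standard short exact sequences attached to $E$: the real-component sequence at infinity and the formal-group/reduction sequence at the finite places.

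For the archimedean place (part (1)) I would argue directly with $N \colon E(\CC) \to E(\RR)$. Since $E(\CC)$ is connected and divisible, its image is a connected subgroup of finite index, namely $E(\RR)^0$, so the cokernel is the component group $\pi_0(E(\RR))$. This group has order $2$ precisely when $E(\RR)$ is disconnected, i.e.\ when the minimal model has $\Delta_{\mathrm{min}} > 0$ (the reduced cubic has three real roots), and order $1$ when $\Delta_{\mathrm{min}} < 0$; thus $i_\infty \in \{0,1\}$ exactly as claimed. For an inert place of good reduction, whatever the residue characteristic, I would invoke the standard fact that $E(K_\frakp)$ has vanishing Tate cohomology over an unramified extension: the formal part $\hat{E}(\mathfrak{m}_{K_\frakp})$ is cohomologically trivial (the ring of integers admits a normal integral basis, hence is induced over $\ZZ_p[G_p]$), and the reduction part has surjective norm by Lang's theorem. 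Hence $i_p = 0$ at every inert prime.

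The ramified good case for odd $p$ (part (2)) is where the reduction genuinely appears. Feeding
\begin{equation*}
0 \to \hat{E}(\mathfrak{m}_{K_\frakp}) \to E(K_\frakp) \to \tilde{E}(\FF_p) \to 0
\end{equation*}
into the long exact Tate cohomology sequence, I would first note that for $p$ odd the formal group $\hat{E}(\mathfrak{m}_{K_\frakp})$ is uniquely $2$-divisible, while its Tate cohomology is annihilated by $|G_p| = 2$; being both $2$-divisible and $2$-torsion it vanishes, so $\hat{H}^0(G_p, E(K_\frakp)) \cong \hat{H}^0(G_p, \tilde{E}(\FF_p))$. Since the extension is ramified the residue field is still $\FF_p$ and $G_p$ acts trivially on $\tilde{E}(\FF_p)$, so the norm element acts as multiplication by $2$ and $\hat{H}^0(G_p,\tilde{E}(\FF_p)) = \tilde{E}(\FF_p)/2\tilde{E}(\FF_p)$. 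As $\dim_{\FF_2} A/2A = \dim_{\FF_2} A[2]$ for a finite abelian group $A$, this equals $\dim_{\FF_2}\tilde{E}[2](\FF_p) = \dim_{\FF_2} E[2](k)$, giving part (2). (No bad ramified prime can occur, since bad primes split.)

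The main obstacle is part (3), the ramified prime $p = 2$. Here the argument above collapses: the formal group is pro-$2$ and $|G_2| = 2$, so $\hat{E}(\mathfrak{m}_{K_\frakp})$ is no longer cohomologically trivial, and its contribution is exactly what makes $i_2$ jump between $1$ and $2$. My plan is to first compute the Herbrand quotient of $E(K_\frakp)$, which is $1$ because $\tilde{E}(\FF_2)$ is finite and the additive group $\calO_{K_\frakp}$ (to which $\hat{E}(\mathfrak{m}_{K_\frakp})$ is commensurable as a $\ZZ_2[G_2]$-module via the formal logarithm) has Herbrand quotient $1$ by the normal basis theorem; this forces $\#\hat{H}^0 = \#\hat{H}^1$ but not their common value. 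To pin down $i_2$ I would then identify $\hat{H}^0(G_2, \hat{E}(\mathfrak{m}_{K_\frakp}))$ with an explicit norm cokernel on $\calO_{K_\frakp}$ and detect whether the relevant local unit is a norm from the ramified quadratic extension $K_\frakp/\QQ_2$; local class field theory translates this into the value of the Hilbert norm-residue symbol $(\Delta_{\mathrm{min}}, d)_{\QQ_2}$, which is $+1$ precisely when the extra factor of $2$ appears. Carrying out this last identification carefully, matching the formal-group cohomology at $2$ to the symbol $(\Delta_{\mathrm{min}}, d)_{\QQ_2}$, is the delicate heart of the computation, and is exactly the content supplied by \S 2 of \cite{Kr}.
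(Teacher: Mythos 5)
Your reconstruction is correct and follows the same (standard) route as the source the paper relies on: the paper offers no proof of this proposition beyond the citation to \S 2 of \cite{Kr}, and your reduction to Tate cohomology of the cyclic local Galois group, the treatment of split and inert primes, the component-group argument at infinity, and the formal-group/reduction d\'evissage at ramified odd primes are all carried out correctly. The one step you defer to \cite{Kr} --- identifying the norm cokernel of the formal group at a ramified prime above $2$ with the Hilbert symbol $(\Delta_\mathrm{min}, d)_{\QQ_2}$ --- is indeed the technical heart of part (3), but the paper likewise reproduces none of it, so your proposal if anything supplies more detail than the text does.
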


\subsubsection{Everywhere-local norm group}

Now we provide a way to compute the everywhere-local norm group $\Phi$. The following is the key.

\begin{proposition}[\cite{Kr}, Proposition 7]\label{prop:str_of_Phi}
The everywhere-local norm group $\Phi$ is the intersection of $\Sel^2(E/\QQ)$ and $\Sel^2(E_d/\QQ)$ inside $H^1(\QQ, E[2]) \simeq H^1(\QQ, E_d[2])$, where $E_d$ is the quadratic twist of $E$ by $d$.
\end{proposition}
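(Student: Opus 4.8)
The plan is to deduce the global identity from a purely local statement at each place, after first making the identification $H^1(\QQ,E[2])\simeq H^1(\QQ,E_d[2])$ explicit. I would record that if $\psi\colon E\to E_d$ is an isomorphism over $K$ with $\psi^\sigma=-\psi$ (where $\sigma$ generates $\Gal(K/\QQ)$), then on $E[2]$ the sign $-1$ acts trivially, so $\psi$ restricts to a $\Gal(\bar\QQ/\QQ)$-equivariant isomorphism $E[2]\simeq E_d[2]$; this induces $\psi_*$ on $H^1$ globally and over every completion, and it preserves the Weil pairing. Writing $\delta_E$ for the Kummer map, $L_v(E)=\Ima\big(E(\QQ_v)/2E(\QQ_v)\to H^1(\QQ_v,E[2])\big)$, and $L_v(E_d)$ for its counterpart transported by $\psi_*$, both $\Sel^2(E/\QQ)$ and $\Sel^2(E_d/\QQ)$ consist of the global classes whose localisation at every $v$ lies in $L_v(E)$, resp.\ $L_v(E_d)$. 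Since the norm on points corresponds to corestriction on Kummer images, one has $N_v\big(\prod_{\frakp\mid v}\Sel^2(E/K_\frakp)\big)=\mathrm{cor}_v(L_v'(E))$, where $L_v'(E)\subseteq H^1(K_v,E[2])$ is the local Kummer image over $K_v=K\otimes_\QQ\QQ_v$. As corestriction carries Kummer images into Kummer images, the everywhere-local-norm condition already forces membership in $\Sel^2(E/\QQ)$, so it suffices to prove, at each $v$, the local identity
\begin{equation*}
\mathrm{cor}_v\big(L_v'(E)\big)=L_v(E)\cap L_v(E_d)\quad\text{inside }H^1(\QQ_v,E[2]).
\end{equation*}
Granting this everywhere, a global class lies in $\Phi$ iff all its localisations lie in $L_v(E)\cap L_v(E_d)$, which is precisely $\Sel^2(E/\QQ)\cap\Sel^2(E_d/\QQ)$.

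For the places $v$ that split in $K$ --- in particular all $v\mid N$ by the Heegner hypothesis --- the claim is immediate: there $K_v=\QQ_v\times\QQ_v$, the curves $E$ and $E_d$ are already isomorphic over $\QQ_v$ so $L_v(E)=L_v(E_d)$, and the norm $E(\QQ_v)\times E(\QQ_v)\to E(\QQ_v)$ is addition, hence surjective, so both sides equal $L_v(E)$. The content lies at the non-split places, where $K_\frakp/\QQ_v$ is a quadratic field with $G=\Gal(K_\frakp/\QQ_v)=\{1,\sigma\}$. Here I would exploit the local isomorphism $\psi\colon E\to E_d$ over $K_\frakp$ with $\psi^\sigma=-\psi$, which on points interchanges the norm $1+\sigma$ on $E$ with the map $1-\sigma$ on $E_d$; this is the mechanism translating ``being a local norm'' into ``lying in the twisted Kummer image.''

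The inclusion $\mathrm{cor}_v(L_v'(E))\subseteq L_v(E)\cap L_v(E_d)$ I would prove cleanly by local Tate duality. Under the cup product valued in $H^2(\QQ_v,\mu_2)=\FF_2$ coming from the Weil pairing, each of $L_v(E)$ and $L_v(E_d)$ is a maximal isotropic subspace of $H^1(\QQ_v,E[2])$, and $L_v'(E)$ is maximal isotropic in $H^1(K_\frakp,E[2])$. Restriction sends both $L_v(E)$ and $L_v(E_d)$ into the single Lagrangian $L_v'(E)$, because over $K_\frakp$ the two curves coincide. Since corestriction is adjoint to restriction for these pairings, for $x\in L_v'(E)$ and $y\in L_v(E)$ (or $y\in L_v(E_d)$) one gets $\langle\mathrm{cor}_v(x),y\rangle_v=\langle x,\mathrm{res}_v(y)\rangle_{K_\frakp}=0$, as $\mathrm{res}_v(y)\in L_v'(E)=L_v'(E)^\perp$. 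Because each Lagrangian is its own annihilator, $\mathrm{cor}_v(x)$ lands in both, giving the inclusion.

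The reverse inclusion is the crux and the main obstacle. A cocycle chase shows that if $\xi=\delta_E(Q)\in L_v(E)\cap L_v(E_d)$ with $Q\in E(\QQ_v)$, then $Q$ differs from a genuine norm $N(W)$, $W\in E(K_\frakp)$, by an element of the rational $2$-torsion $E(\QQ_v)[2]$; controlling this $2$-torsion discrepancy is exactly what is delicate, and it is where the behaviour at $v=2$ and at ramified $v$ (the non-semisimplicity of $\FF_2[G]$) enters. I would finish by a dimension count rather than by hand. Setting $m=\dim_{\FF_2}E(\QQ_v)/2E(\QQ_v)$ and using $2E(\QQ_v)\subseteq N(E(K_\frakp))$ (since $2Q=N(Q)$ for $Q\in E(\QQ_v)$), the norm image has codimension $i_v$ in $E(\QQ_v)/2E(\QQ_v)$, so $\dim_{\FF_2}\mathrm{cor}_v(L_v'(E))=m-i_v$, where $i_v$ is the local norm index computed independently in Proposition \ref{prop:computing_local_norm_indices} (equivalently via the Herbrand quotient of $E(K_\frakp)$). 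On the other hand $\dim_{\FF_2}\big(L_v(E)\cap L_v(E_d)\big)=m-\mathrm{rk}(B)$ for the restricted cup pairing $B\colon L_v(E)\times L_v(E_d)\to\FF_2$. The inclusion of the previous paragraph already gives $\mathrm{rk}(B)\le i_v$, so everything reduces to the reverse inequality $\mathrm{rk}(B)\ge i_v$ --- equivalently, that the $2$-torsion discrepancy above is always itself a norm. This single local fact, to be verified place by place at the dyadic and ramified primes, is where I expect the real work to concentrate; once it is in hand, the matching dimensions combine with the inclusion to give the local identity, and hence the proposition.
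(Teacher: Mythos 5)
The paper does not actually prove this statement: it is quoted verbatim as Proposition 7 of Kramer's paper \cite{Kr}, so there is no internal proof to compare against. Judged on its own terms, your reconstruction has the right architecture --- identify $E[2]$ with $E_d[2]$ via the $K$-isomorphism $\psi$ (using that $-1$ acts trivially on $2$-torsion), reduce the global statement to the local identity $\mathrm{cor}_v(L_v'(E))=L_v(E)\cap L_v(E_d)$ at each place, dispose of the split places, and prove the inclusion $\subseteq$ by the adjunction of restriction and corestriction against the Lagrangian property of local Kummer images. All of that is correct, and it is essentially how Kramer organises his argument.

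But the proof is not complete, and you say so yourself: the reverse inclusion $L_v(E)\cap L_v(E_d)\subseteq\mathrm{cor}_v(L_v'(E))$ at the non-split places --- equivalently the inequality $\mathrm{rk}(B)\ge i_v$ --- is exactly the content of the proposition there, and you leave it as a fact ``to be verified place by place.'' Everything you do establish (the containment one way, the dimension formula $\dim_{\FF_2}\mathrm{cor}_v(L_v'(E))=m-i_v$, the identity $\dim_{\FF_2}\lp L_v(E)\cap L_v(E_d)\rp=m-\mathrm{rk}(B)$) is soft; the hard local input is the claim that a point of $E(\QQ_v)$ whose Kummer class also lies in the twisted Kummer image is genuinely a norm from $E(K_\frakp)$. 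In Kramer's paper this is its own proposition (the local norm image is identified with $\lbrace P\in E(\QQ_v):\delta_E(P)\in L_v(E_d)\rbrace$ by an explicit analysis of the maps $Q\mapsto Q+Q^\sigma$ and $Q\mapsto Q-Q^\sigma$ on $E(K_\frakp)$ and of the resulting $2$-torsion ambiguity), and it is precisely the delicate ramified and dyadic cases that you defer. As it stands the proposal is a correct reduction plus an honest statement of what remains, not a proof.
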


Let $E_d$ be the quadratic twist of $E$ by $d$. In particular, suppose $E$ is defined by the Weierstrass equation
\begin{equation}\label{eq:Weierstrass_having_2_torsion}
y^2 = x^3 + Ax^2 + Bx,
\end{equation}
which has discriminant $\Delta = 2^4 B^2 ( A^2 - 4B)$. Then $E_d$ has the Weierstrass equation of the form
\begin{equation}\label{eq:Weierstrass_quadratic_twist}
y^2 = x^3 + Ad x^2 + Bd^2x.
\end{equation}
The discriminant of the above equation \eqref{eq:Weierstrass_quadratic_twist} is given by $\Delta_d = 16d^6 B^2(A^2 - 4B)$.

\begin{proposition}\label{prop:canonical_isomorphism_of_torsion_groups}
The 2-torsion subgroups $E[2]$ and $E_d[2]$ are canonically isomorphic as $\Gal(\overline{\QQ}|\QQ)$-modules. Consequently, the Galois cohomology groups $H^\bullet (\QQ, E[2])$ and $H^\bullet(\QQ, E_d[2])$ are isomorphic. In particular, we identify $H^1(\QQ, E[2]) = H^1(\QQ, E_d[2])$ in the sequel.
\end{proposition}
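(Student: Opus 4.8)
The plan is to produce the isomorphism explicitly from the two given Weierstrass models, observe that it is a priori only defined over $K$, and then show that its restriction to the $2$-torsion nonetheless descends to $\QQ$. From equations \eqref{eq:Weierstrass_having_2_torsion} and \eqref{eq:Weierstrass_quadratic_twist}, the substitution $X = dx$, $Y = d\sqrt{d}\,y$ identifies a point $(x,y)$ of $E$ with the point $(dx, d\sqrt{d}\,y)$ of $E_d$; this is an isomorphism $\phi \colon E \to E_d$ defined over $K = \QQ(\sqrt{d})$. On the $2$-torsion we have $E[2] = \lbr O, (0,0), (\alpha,0), (\beta,0) \rbr$, where $\alpha, \beta$ are the roots of $x^2 + Ax + B$, and correspondingly $E_d[2] = \lbr O, (0,0), (d\alpha,0), (d\beta,0) \rbr$, so that $\phi$ carries $(x,0)$ to $(dx,0)$. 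The point to exploit is that this last formula involves no square root, and the plan is to upgrade this into genuine $\Gal(\overline{\QQ}|\QQ)$-equivariance.

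First I would record the Galois action on $\phi$. Since $d \in \QQ$, the only non-rational quantity appearing is $\sqrt{d}$, so for $\sigma \in \Gal(\overline{\QQ}|\QQ)$ one has ${}^\sigma\phi(x,y) = (dx, \chi(\sigma)\,d\sqrt{d}\,y)$, where $\chi$ is the quadratic character cutting out $K$; equivalently ${}^\sigma\phi = [\chi(\sigma)] \circ \phi$, with $[\chi(\sigma)] \in \mathrm{Aut}(E_d)$ being either the identity or the negation $[-1]\colon (X,Y) \mapsto (X,-Y)$. Now I would restrict to the $2$-torsion. Every point of $E_d[2]$ equals its own inverse, so $[-1]$ acts as the identity on $E_d[2]$; hence $[\chi(\sigma)]$ restricts to the identity for every $\sigma$, and therefore ${}^\sigma\!\left(\phi|_{E[2]}\right) = \phi|_{E[2]}$. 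This is precisely the statement that $\phi|_{E[2]} \colon E[2] \to E_d[2]$ commutes with the Galois action, and, being a bijection, it is the desired isomorphism of $\Gal(\overline{\QQ}|\QQ)$-modules.

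Finally, the isomorphism is \emph{canonical} because the formula $(x,0) \mapsto (dx,0)$ does not depend on the chosen sign of $\sqrt{d}$, i.e.\ on the two embeddings $K \hookrightarrow \overline{\QQ}$; conceptually, passing to the quadratic twist alters the Galois action on torsion by the character $\chi$, which is invisible on $E[2]$ since $\chi$ is valued in $\lbr \pm 1 \rbr$ and $-1$ acts trivially there. The cohomological consequence is then immediate from the functoriality of $H^\bullet(\QQ, -)$: an isomorphism of Galois modules induces isomorphisms $H^\bullet(\QQ, E[2]) \simeq H^\bullet(\QQ, E_d[2])$ in every degree, and in degree one this furnishes the identification used in the sequel. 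I expect no serious obstacle here; this is essentially a structural fact, and the only computation that needs care is the cocycle relation ${}^\sigma\phi = [\chi(\sigma)] \circ \phi$, after which the vanishing of $[-1]$ on $E_d[2]$ makes everything formal.
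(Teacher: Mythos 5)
Your proof is correct and follows essentially the same route as the paper, which simply exhibits the map $(t,0) \mapsto (dt,0)$ and asserts its Galois-equivariance. Your additional verification — that the $K$-isomorphism $\phi$ satisfies ${}^\sigma\phi = [\chi(\sigma)]\circ\phi$ and that $[-1]$ acts trivially on $2$-torsion — is exactly the detail the paper leaves implicit.
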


\begin{proof}
The Galois-equivariant isomorphism $E[2] \to E_d[2]$ is given by $(t,0) \mapsto (dt,0)$.
\end{proof}

Denote by $P$ (resp. $P_d$) the rational torsion point of order 2 in $E$ (resp. $E_d$) corresponding to $(0,0)$ in the equation \eqref{eq:Weierstrass_having_2_torsion} (resp. $(0,0)$ in the equation \eqref{eq:Weierstrass_quadratic_twist}). Let $E'$ (resp. $E'_d$) be the elliptic curve $E/\langle P \rangle$ (resp. $E_d / \langle P_d \rangle$) and let $\phi$ (resp. $\phi_d$) be the canonical quotient 2-isogeny $E \to E'$ (resp. $E_d \to E'_d$).

\begin{proposition}
There are canonical homomorphisms
\begin{equation*}
H^1(\QQ, E[\phi]) \to H^1(\QQ, E[2]), \qquad \text{and} \qquad
H^1(\QQ, E_d[\phi_d]) \to H^1(\QQ, E_d[2]),
\end{equation*}
and they induce
\begin{equation*}
\Sel^\phi (E/\QQ) \to \Sel^2 (E/\QQ), \qquad \text{and} \qquad
\Sel^{\phi_d} (E_d/\QQ) \to \Sel^2 (E_d/\QQ).
\end{equation*}
\end{proposition}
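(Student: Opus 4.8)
The plan is to realize both displayed maps as the homomorphism induced on cohomology by the inclusion of Galois modules $E[\phi] \hookrightarrow E[2]$, and then to check that this homomorphism respects the local conditions that cut out the two Selmer groups. The assertion for $(E_d, \phi_d, E_d')$ is formally identical to the one for $(E, \phi, E')$, so I will write out only the latter and regard the former as obtained by substituting $E_d$ for $E$ throughout.

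First I would observe that $E[\phi] = \langle P \rangle$ is a Galois-stable subgroup of $E[2]$: writing $\hat\phi \colon E' \to E$ for the dual isogeny, the relation $\hat\phi \circ \phi = [2]$ gives $\ker \phi \subseteq \ker [2]$, and the kernel of an isogeny defined over $\QQ$ is stable under $\Gal(\overline{\QQ}/\QQ)$. Hence the inclusion $i \colon E[\phi] \hookrightarrow E[2]$ is a morphism of Galois modules, and functoriality of group cohomology produces the canonical homomorphism $i_* \colon H^1(\QQ, E[\phi]) \to H^1(\QQ, E[2])$. This is the first assertion, and the same construction over each completion $\QQ_v$ gives $i_* \colon H^1(\QQ_v, E[\phi]) \to H^1(\QQ_v, E[2])$.

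To obtain the induced map on Selmer groups, I would assemble the two descent sequences into a single morphism of short exact sequences of Galois modules,
\[
\xymatrix{
0 \ar[r] & E[\phi] \ar[r] \ar[d]_{i} & E \ar[r]^{\phi} \ar@{=}[d] & E' \ar[r] \ar[d]^{\hat\phi} & 0 \\
0 \ar[r] & E[2] \ar[r] & E \ar[r]_{[2]} & E \ar[r] & 0,
}
\]
all of whose squares commute, the only nonformal verification being $\hat\phi \circ \phi = [2]$. Passing to cohomology over $\QQ$ and over each $\QQ_v$ and using functoriality of the connecting homomorphisms yields, for every place $v$, a commutative square of local Kummer maps
\[
\xymatrix{
E'(\QQ_v)/\phi E(\QQ_v) \ar[r]^-{\delta_v} \ar[d]_{\hat\phi} & H^1(\QQ_v, E[\phi]) \ar[d]^{i_*} \\
E(\QQ_v)/2E(\QQ_v) \ar[r]^-{\delta_v^{(2)}} & H^1(\QQ_v, E[2]),
}
\]
in which the left vertical arrow is well defined because $\hat\phi\bigl(\phi E(\QQ_v)\bigr) = 2E(\QQ_v)$.

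Finally I would invoke the descriptions of $\Sel^\phi(E/\QQ)$ and $\Sel^2(E/\QQ)$ as the sets of global classes whose restriction at each place $v$ lands in $\Ima(\delta_v)$ and $\Ima(\delta_v^{(2)})$ respectively. Given $\xi \in \Sel^\phi(E/\QQ)$, for every $v$ the commuting square gives $\res_v(i_* \xi) = i_*(\res_v \xi) \in i_*\bigl(\Ima(\delta_v)\bigr) = \Ima(\delta_v^{(2)} \circ \hat\phi) \subseteq \Ima(\delta_v^{(2)})$, so $i_* \xi$ satisfies every local condition defining $\Sel^2(E/\QQ)$ and hence lies in it. I do not expect a genuine obstacle: the sole arithmetic input is the factorization $[2] = \hat\phi \circ \phi$, which at once furnishes the morphism of short exact sequences and guarantees that $\hat\phi$ carries the local $\phi$-descent classes into the local doubling classes, everything else being a formal diagram chase. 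The one point that deserves care is to check the containment of local images in the correct direction, namely that $i_*$ sends $\Ima(\delta_v)$ into $\Ima(\delta_v^{(2)})$, which is exactly what the second commutative square provides.
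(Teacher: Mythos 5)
Your proof is correct and follows essentially the same route as the paper: the map on cohomology is the one induced by the inclusion of Galois modules $E[\phi] \hookrightarrow E[2]$ (which the paper packages as the exact sequence $0 \to E[\phi] \to E[2] \to E'[\phi'] \to 0$), and it restricts to the Selmer groups. You supply the verification of the Selmer-group compatibility via the morphism of descent sequences and the identity $\hat\phi \circ \phi = [2]$, a step the paper simply asserts; your added detail is accurate.
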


\begin{proof}
If we denote the unique dual rational 2-isogeny of $\phi$ by $\phi'$, then we have a canonical exact sequence
\begin{equation}\label{eq:isogeny_SES}
0 \longrightarrow E[\phi] \longrightarrow E[2] \longrightarrow E'[\phi'] \longrightarrow 0.
\end{equation}
This defines a canonical map $H^1(\QQ, E[\phi]) \to H^1(\QQ, E[2])$ on cohomology groups, and it restricts to the map $\Sel^\phi (E/\QQ) \to \Sel^2 (E/\QQ)$ of subgroups. For $E_d$ and $\phi_d$ the proof is \textit{mutatis mutandis} the same.
\end{proof}

\begin{proposition}
There are canoncial isomorphisms $H^1(\QQ, E[\phi]) \simeq \sqfr{\QQ}$ and $H^1(\QQ, E_d[\phi_d]) \simeq \sqfr{\QQ}$. Moreover, the isomorphisms are compatible in the sense that the following diagram is commutative:
\begin{equation*}\begin{gathered}
\xymatrix{
& H^1(\QQ, E[\phi]) \ar[dd] \ar[r] & H^1(\QQ,E[2]) \ar[dd]^= \\
\sqfr{\QQ} \simeq H^1(\QQ,\mu_2) \ar[ur]^\sim \ar[dr]_\sim && \\
& H^1(\QQ, E_d[\phi_d]) \ar[r] & H^1(\QQ,E_d[2]) \\
}
\end{gathered}\end{equation*}
where the vertical map in the middle is induced by the canonical isomorphism in the Proposition \ref{prop:canonical_isomorphism_of_torsion_groups}.
\end{proposition}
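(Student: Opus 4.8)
The plan is to reduce both isomorphisms to Kummer theory for $\mu_2$. The isogeny $\phi : E \to E'$ has kernel $E[\phi] = \langle P \rangle = \{O, P\}$ with $P = (0,0)$ a $\QQ$-rational point; hence $E[\phi]$ is a group scheme of order $2$ on which $\Gal(\overline{\QQ}|\QQ)$ acts trivially, so $E[\phi] \simeq \ZZ/2\ZZ$ as a Galois module. Since $-1 \in \QQ$, the group scheme $\mu_2$ is likewise constant, and there is a canonical isomorphism of Galois modules $E[\phi] \simeq \mu_2$ sending the unique nonzero element $P$ to $-1$. First I would record this identification explicitly and observe that the identical reasoning applied to $\phi_d : E_d \to E_d'$ yields a canonical $E_d[\phi_d] \simeq \mu_2$ sending $P_d = (0,0)$ to $-1$.

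Next I would invoke the Kummer exact sequence $1 \to \mu_2 \to \mathbf{G}_m \xrightarrow{x \mapsto x^2} \mathbf{G}_m \to 1$. Passing to Galois cohomology and using Hilbert's Theorem 90, $H^1(\QQ, \mathbf{G}_m) = 0$, gives the canonical isomorphism $H^1(\QQ, \mu_2) \simeq \sqfr{\QQ}$. Composing with the coefficient-module isomorphisms of the previous step produces the two desired identifications $H^1(\QQ, E[\phi]) \simeq \sqfr{\QQ}$ and $H^1(\QQ, E_d[\phi_d]) \simeq \sqfr{\QQ}$, both of which factor canonically through the common group $H^1(\QQ, \mu_2)$.

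For the commutativity of the diagram I would argue entirely at the level of Galois modules and then pass to cohomology by functoriality of $H^1(\QQ, -)$. The two slanted isomorphisms are, by construction, the identifications $\mu_2 \simeq E[\phi]$ and $\mu_2 \simeq E_d[\phi_d]$, while the horizontal maps are induced by the inclusions $E[\phi] \hookrightarrow E[2]$ and $E_d[\phi_d] \hookrightarrow E_d[2]$ coming from \eqref{eq:isogeny_SES}. It therefore suffices to check that the composite $\mu_2 \to E[\phi] \hookrightarrow E[2]$, followed by the canonical isomorphism $E[2] \simeq E_d[2]$ of Proposition \ref{prop:canonical_isomorphism_of_torsion_groups}, equals the composite $\mu_2 \to E_d[\phi_d] \hookrightarrow E_d[2]$. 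The first sends $-1 \mapsto P = (0,0)$, and the isomorphism $(t,0) \mapsto (dt,0)$ carries $(0,0)$ to $(d \cdot 0, 0) = (0,0) = P_d$; the second sends $-1 \mapsto P_d = (0,0)$. Since these agree, the triangle of coefficient modules commutes, and hence so does the induced diagram on $H^1$.

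The argument is genuinely formal once the coefficient modules are correctly identified, so I do not anticipate a serious obstacle. The one point demanding care — and the only place a sign or indexing slip could enter — is ensuring that the \emph{same} copy of $\mu_2$ (equivalently, the same generator $-1$) underlies both isomorphisms, so that the compatibility assertion is even meaningful; this is exactly what the computation $(0,0) \mapsto (0,0)$ confirms. I would also verify that the horizontal maps are precisely those induced by \eqref{eq:isogeny_SES}, and not a twisted variant, since it is this that licenses the reduction to coefficient modules.
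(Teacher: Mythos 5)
Your proposal is correct and follows essentially the same route as the paper's (much terser) proof: identify $E[\phi]$ and $E_d[\phi_d]$ with the constant group $\mu_2$, apply Kummer theory for $H^1(\QQ,\mu_2)\simeq\sqfr{\QQ}$, and check compatibility at the level of coefficient modules before passing to cohomology by functoriality. Your explicit verification that $(0,0)\mapsto(d\cdot 0,0)=(0,0)$ under the isomorphism of Proposition \ref{prop:canonical_isomorphism_of_torsion_groups} is exactly the content the paper leaves implicit in the phrase ``whence the result follows.''
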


\begin{proof}
Clearly the isomorphisms $\mu_2 \to E[\phi]$ and $\mu_2 \to E_d[\phi_d]$ are compatible in the sense the left triangle commutes. By Kummer theory we know $H^1(\QQ, \mu_2) =\sqfr{\QQ}$, whence the result follows. 
\end{proof}

\begin{proposition}\label{prop:kernel_of_selmer}
Let $G$ be the subgroup of $\sqfr{\QQ}$ generated by the class of $A^2 - 4B$. Then $G$ is the kernel of the homomorphisms $H^1(\QQ, E[\phi]) \to H^1(\QQ, E[2])$ and $H^1(\QQ, E_d[\phi_d]) \to H^1(\QQ, E_d[2])$. Thus,
\begin{equation*}
\Ker \lp \Sel^\phi (E/\QQ) \to \Sel^2 (E/\QQ) \rp = G \cap \Sel^\phi(E/\QQ) \subset \Sel^\phi (E/\QQ).
\end{equation*}
Similarly,
\begin{equation*}
\Ker \lp \Sel^{\phi_d} (E_d/\QQ) \to \Sel^2 (E_d/\QQ) \rp = G \cap \Sel^{\phi_d}(E_d/\QQ) \subset \Sel^\phi (E_d/\QQ).
\end{equation*}
\end{proposition}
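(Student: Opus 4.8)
The plan is to read off the kernel directly from the long exact cohomology sequence attached to the short exact sequence \eqref{eq:isogeny_SES}. Applying $H^\bullet(\QQ, -)$ to $0 \to E[\phi] \to E[2] \to E'[\phi'] \to 0$ produces
\[
E'[\phi']^{\Gal(\overline{\QQ} \mid \QQ)} \xrightarrow{\ \delta\ } H^1(\QQ, E[\phi]) \longrightarrow H^1(\QQ, E[2]),
\]
so the kernel of the second arrow is precisely the image of the connecting homomorphism $\delta$. It therefore suffices to identify the source $E'[\phi']^{\Gal(\overline{\QQ}\mid\QQ)}$ and to compute $\delta$ explicitly under the Kummer identification $H^1(\QQ, E[\phi]) \simeq \sqfr{\QQ}$ fixed in the previous propositions.

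First I would determine $E'[\phi']$ as a Galois module. Writing the nontrivial $2$-torsion of \eqref{eq:Weierstrass_having_2_torsion} as $P = (0,0)$ together with $(\alpha,0)$ and $(\beta,0)$, where $\alpha,\beta = (-A \pm \sqrt{A^2-4B})/2$ are the roots of $x^2 + Ax + B$, the relation $(\alpha,0) + (\beta,0) = P$ shows that $(\alpha,0)$ and $(\beta,0)$ have the same image $\bar Q$ in $E'[\phi'] = E[2]/E[\phi]$. Since $\Gal(\overline{\QQ}\mid\QQ)$ either fixes or interchanges $(\alpha,0)$ and $(\beta,0)$, the class $\bar Q$ is Galois-invariant, whence $E'[\phi']^{\Gal(\overline{\QQ}\mid\QQ)} = E'[\phi'] \simeq \ZZ/2\ZZ$ is generated by $\bar Q$.

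Next I would evaluate $\delta(\bar Q)$. Lifting $\bar Q$ to $Q = (\alpha,0)\in E[2]$, the connecting map yields the cocycle $\sigma \mapsto \sigma(Q) - Q \in E[\phi]$, which is trivial when $\sigma$ fixes $\sqrt{A^2-4B}$ and equals $(\beta,0)-(\alpha,0) = (\alpha,0)+(\beta,0) = P$ when $\sigma$ negates it. Under the canonical identification $E[\phi] \simeq \mu_2$ (with $P \leftrightarrow -1$) and Kummer theory $H^1(\QQ,\mu_2) \simeq \sqfr{\QQ}$, this cocycle is exactly the quadratic character cutting out $\QQ(\sqrt{A^2-4B})$, i.e.\ the class of $A^2-4B$; hence $\Ima\delta = G$. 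For $E_d$ the identical computation with $A \mapsto Ad$ and $B \mapsto Bd^2$ produces the class of $(Ad)^2 - 4Bd^2 = d^2(A^2-4B)$, which equals the class of $A^2-4B$ in $\sqfr{\QQ}$ because $d^2$ is a square; so the kernel is again $G$, in agreement with the commutative diagram relating the two identifications. The Selmer statements then follow formally: as $\Sel^\phi(E/\QQ) \to \Sel^2(E/\QQ)$ is the restriction of $H^1(\QQ, E[\phi]) \to H^1(\QQ, E[2])$ to the Selmer subgroups, its kernel is $\Sel^\phi(E/\QQ) \cap \Ker\big(H^1(\QQ,E[\phi]) \to H^1(\QQ,E[2])\big) = G \cap \Sel^\phi(E/\QQ)$, and likewise for $E_d$.

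The main obstacle is the explicit evaluation of $\delta$: one must track simultaneously the isomorphism $\mu_2 \simeq E[\phi]$ and the Kummer isomorphism so that the resulting cocycle is matched with the class of $A^2-4B$ itself rather than some square multiple, and one must verify that $\bar Q$ is genuinely $\Gal(\overline{\QQ}\mid\QQ)$-fixed even in the generic case where $A^2-4B$ is not a square, so that the source of $\delta$ really has order $2$.
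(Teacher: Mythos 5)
Your proposal is correct and follows essentially the same route as the paper: both identify the kernel of $H^1(\QQ,E[\phi])\to H^1(\QQ,E[2])$ with the image of the connecting map out of $E'(\QQ)[\phi']$ and then compute that image as the cocycle $\sigma\mapsto\sigma(Q)-Q$ attached to $Q=\bigl((-A\pm\sqrt{A^2-4B})/2,\,0\bigr)$, matching it with the class of $A^2-4B$ under the Kummer identification. Your direct verification that the generator of $E'[\phi']$ is Galois-fixed (via $(\alpha,0)+(\beta,0)=P$) and your explicit check that the twisted discriminant $d^2(A^2-4B)$ gives the same square class are slightly more detailed than the paper's ``mutatis mutandis,'' but the argument is the same.
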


\begin{proof}
We only give a proof for $E$ and $\phi$. For $E_d$ and $\phi_d$, everything is the same under making certain notational change. From the short exact sequence \eqref{eq:isogeny_SES}, we have the long exact sequence of cohomology groups:
\begin{equation*}
0 \to E(\QQ)[\phi] \to E(\QQ)[2] \to E'(\QQ)[\phi'] \xrightarrow{\eta} H^1(\QQ,E[\phi]) \to H^1(\QQ, E[2]) \to H^1(\QQ,E'[\phi']) \to \cdots
\end{equation*}
Because we only consider those elliptic curves with $E(\QQ)[\phi] = E(\QQ)[2]$, the map $E(\QQ)[2] \to E'(\QQ)[\phi']$ is the zero map, and this again forces us that $\eta : E'(\QQ)[\phi'] \to H^1(\QQ,E[\phi])$ is injective. The image $\eta \lp E'(\QQ)[\phi'] \rp$ is the kernel of $H^1(\QQ, E[\phi]) \to H^1(\QQ, E[2])$.

We claim that this kernel is equal to $G$. Write $E(\overline{\QQ})[2] = \lbrace O, P, Q, P+Q \rbrace$, where $O$ is the identity of $E$ and $P \in E(\QQ)$, and similarly write $E'(\overline{\QQ})[\phi'] = \lbrace O', T \rbrace$, where $O'$ is the identity of $E'$. Clearly $T \in E'(\QQ)$. Since $E(\overline{\QQ})[2] \to E'(\overline{\QQ})[\phi']$ is surjective but $E(\QQ)[2] \to E'(\QQ)[\phi']$ is the zero map, the point $Q$ is mapped onto $T$ under $E(\overline{\QQ})[2] \to E'(\overline{\QQ})[\phi']$. Then, $\eta(T) \in H^1(\QQ,E[\phi])$ is defined by the 1-cocyle
\begin{equation*}
\sigma \mapsto \sigma(Q) - Q = \begin{cases}
P & \text{ if $\sigma(Q) = P+Q \neq Q$, } \\
0 & \text{ if $\sigma(Q) = Q$.}
\end{cases}
\end{equation*}
However, this 1-cocycle corresponds to the 1-cocycle $\sigma \mapsto \sigma(\sqrt{b})/\sqrt{b}$ defining an element $H^1(\QQ, \mu_2)$, where $b=A^2 - 4B$, since in the Weierstrass equation \eqref{eq:Weierstrass_having_2_torsion}, $Q$ corresponds to the point $\displaystyle \left( \frac{-A \pm \sqrt{A^2 - 4B}}{2}, 0 \right)$ and thus $\sigma(Q) = Q$ if and only if $\sigma \left( \sqrt{A^2 - 4B} \right) = \sqrt{A^2 - 4B}$. Clearly the 1-cocycle $\sigma \mapsto \dfrac{\sigma(\sqrt{A^2-4B})}{\sqrt{A^2 - 4B}}$ defining an element $H^1(\QQ, \mu_2)$ corresponds to $A^2 - 4B$ in $\sqfr{\QQ}$.
\end{proof}

Recall (Proposition \ref{prop:str_of_Phi}) that the everywhere-local norm group $\Phi$ is the intersection of two Selmer groups $\Sel^2(E/\QQ)$ and $\Sel^2(E_d/\QQ)$ inside $H^1(\QQ, E[2]) = H^1(\QQ, E_d[2])$. In order to identify elements in the intersection, we need to find $b \in \sqfr{\QQ}$ such that $b \in \Sel^\phi(E/\QQ) \cap \Sel^{\phi_d}(E/\QQ)$ by descent arguments (cf. \cite{AEC}, chapter X). In order to ensure this is not the identity element in $\Phi$, we should check $b \not\in G$. This will be done when we deal with $\torgp \simeq \ZZ/4\ZZ$ or $\ZZ/2\ZZ$.

\subsection{Isogeny invariance of the Gross--Zagier conjecture}

Let $E$ and $E'$ be isogenous elliptic curves defined over $\QQ$, and $K$ be an imaginary quadratic field satisfying the Heegner hypothesis. We consider those curves with fixed modular parametrisations $\pi: X_0(N) \to E$ and $\pi': X_0(N) \to E'$.

\begin{proposition}\label{prop:isoginv}
Let $\theta: E \to E'$ be a rational isogeny.
\begin{enumerate}
\item
If the strong Gross--Zagier conjecture (Conjecture \ref{conj:GZ_strong_conjecture}) is true for $E$ then it is also true for $E'$.

\item
Suppose that $\theta$ respects modular parametrisations of $E$ and $E'$, i.e., $\pi' = \theta \circ \pi$. Then we have
\begin{equation}\label{eq:isoginv}
\frac{M^2 \cdot C^2 \cdot \# \Sh(E/K)}{[E(K):\ZZ P_K]^2} = \frac{M'^2 \cdot C'^2 \cdot \# \Sh(E'/K)}{[E'(K):\ZZ P_K']^2}.
\end{equation}

\item
Let $p$ be a prime. If
\begin{enumerate}[label=(\roman*)]
\item
$\ord_p \# E(K)_\mathrm{tors} = \ord_p \# E(\QQ)_\mathrm{tors}$, and

\item
$\ord_p \# E(\QQ)_\mathrm{tors} \le \ord_p \lp u_K \cdot C \cdot M \cdot \lp \# \Sh(E/K) \rp^{1/2} \rp$,
\end{enumerate}
then
\begin{equation*}
\ord_p \# E'(\QQ)_\mathrm{tors} \le \ord_p \lp u_K \cdot C' \cdot M' \cdot \lp \# \Sh(E'/K) \rp^{1/2} \rp.
\end{equation*}
In particular, if $E(K)_\mathrm{tors} = E(\QQ)_\mathrm{tors}$, and if the weak Gross--Zagier conjecture (Conjecture \ref{conj:GZ_conjecture}) for $E$ is true, then it is also true for $E'$.
\end{enumerate}
\end{proposition}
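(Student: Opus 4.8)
The plan is to treat part (2) as the quantitative heart and read off (1) and (3) from it. Everything rests on tying together two isogeny invariances by means of the Gross--Zagier formula \eqref{eq:GZ_formula}, which holds unconditionally for every curve and every compatible parametrisation. The first invariance is analytic: isogenous curves share an $L$-function, so $L'(E/K,1)=L'(E'/K,1)$. The second is arithmetic: the Birch--Swinnerton-Dyer quantity $\mathrm{BSD}_{E/K}$ of \eqref{eq:BSD_formula} satisfies $\mathrm{BSD}_{E/K}=\mathrm{BSD}_{E'/K}$; this is Cassels' isogeny theorem (extended to the number field $K$), and it is unconditional in our setting because $\Sh(E/K)$ and $\Sh(E'/K)$ are finite by Kolyvagin. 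I would first record the elementary identity $\hat{h}(P_K)/[E(K):\ZZ P_K]^2=\mathrm{Reg}(E/K)/(\#E(K)_\mathrm{tors})^2$, valid for any infinite-order point since $\ZZ P_K$ is free of rank $\rk E(K)=1$; this shows that $\mathrm{BSD}_{E/K}$ written with the Heegner index does not depend on the choice of $P_K$, so Cassels applies to it directly.

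To prove (2), I would write \eqref{eq:GZ_formula} for $E$ and for $E'$, substitute $\|\omega\|^2\hat{h}(P_K)=L'(E/K,1)\cdot M^2 u_K^2|\disc(K)|^{1/2}$ (and its analogue for $E'$) into the two instances of \eqref{eq:BSD_formula}, and obtain the clean identity
\begin{equation*}
\frac{M^2\cdot C^2\cdot\#\Sh(E/K)}{[E(K):\ZZ P_K]^2}=\frac{\mathrm{BSD}_{E/K}}{u_K^2\cdot L'(E/K,1)},
\end{equation*}
together with the corresponding one for $E'$. Since both $L'(\,\cdot\,/K,1)$ and $\mathrm{BSD}_{\,\cdot\,/K}$ are isogeny-invariant and $u_K$ depends only on $K$, the two right-hand sides coincide, which is exactly \eqref{eq:isoginv}. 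The hypothesis $\pi'=\theta\circ\pi$ enters to identify the Heegner points, $P_K'=\theta(P_K)$: it keeps the Manin constant and the index in each fraction attached to the same parametrisation, and it lets one check, via $\theta^*\omega'=(M'/M)\omega$, $\|\omega'\|^2=(M'^2/M^2\deg\theta)\|\omega\|^2$ and $\hat{h}'(P_K')=\deg\theta\cdot\hat{h}(P_K)$, that the analytic invariance is consistent with the period and height transformations.

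Part (1) is then immediate: rearranging the strong conjecture for $E$ gives $M^2C^2\#\Sh(E/K)/[E(K):\ZZ P_K]^2=u_K^{-2}$, and (2) shows this ratio is unchanged on passing to $E'$; since $u_K$ depends only on $K$, rearranging back yields the strong conjecture for $E'$. For (3) I would take $\ord_p$ of the (positive) square root of \eqref{eq:isoginv}, namely of $u_K C M(\#\Sh(E/K))^{1/2}/[E(K):\ZZ P_K]=u_K C' M'(\#\Sh(E'/K))^{1/2}/[E'(K):\ZZ P_K']$. Writing $[F(K):\ZZ P_K^{(F)}]=m_F\cdot\#F(K)_\mathrm{tors}$ for $F\in\{E,E'\}$ (taking $\pi'=\theta\circ\pi$ as in (2), so $P_K'=\theta(P_K)$), one gets $m_{E'}=n\, m_E$ with $n\ge 1$ the index of the free part of $\theta(E(K))$ in that of $E'(K)$, whence $\ord_p m_{E'}\ge\ord_p m_E$. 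Combining this with $\#F(\QQ)_\mathrm{tors}\mid\#F(K)_\mathrm{tors}\mid[F(K):\ZZ P_K^{(F)}]$ and hypotheses (i)--(ii) produces the chain $\ord_p\#E'(\QQ)_\mathrm{tors}\le\ord_p\#E'(K)_\mathrm{tors}\le\ord_p\lp u_K C' M'(\#\Sh(E'/K))^{1/2}\rp$; applying it at every $p$ gives the final ``in particular'' assertion.

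The main obstacle is the arithmetic invariance $\mathrm{BSD}_{E/K}=\mathrm{BSD}_{E'/K}$: this is where the genuine content lies, since it packages the precise way in which $\#\Sh$, the Tamagawa numbers, the torsion and the period trade off against one another under $\theta$. Everything else---the $L$-function invariance, the period and height formulae, and the valuation bookkeeping in (3)---is routine once this is in hand. A secondary point demanding care is the dependence on the modular parametrisation: one must keep $M$ and the Heegner index attached to the same $\pi$ throughout, which is exactly what the assumption $\pi'=\theta\circ\pi$ secures.
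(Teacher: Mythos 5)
Your proposal is correct and follows essentially the same route as the paper: unconditional isogeny invariance of $L'(\,\cdot\,/K,1)$ plus Cassels' invariance of $\mathrm{BSD}_{\,\cdot\,/K}$ combined via the Gross--Zagier formula to get \eqref{eq:isoginv}, and then the same valuation bookkeeping with $[F(K):\ZZ P_K^{(F)}]=m_F\cdot\#F(K)_\mathrm{tors}$ and $m_{E'}=n\,m_E$ (the paper's $\nu'/\nu$) for part (3). The only cosmetic difference is that you derive (1) from (2) via the identity $M^2C^2\#\Sh(E/K)/[E(K):\ZZ P_K]^2=\mathrm{BSD}_{E/K}/(u_K^2L'(E/K,1))$, whereas the paper observes directly that the strong conjecture is the equation of two isogeny-invariant quantities; both are sound.
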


\begin{proof}
(a) Isogenous curves $E$ and $E'$ have the same $L$-functions and the same BSD formulae, i.e., $L(E/K, s) = L(E'/K, s)$ and $\mathrm{BSD}_{E/K} = \mathrm{BSD}_{E'/K}$ (cf. Conjecture \ref{conj:BSD}). The latter is a theorem of Cassels \cite{Ca1}. As the strong Gross--Zagier conjecture is obtained by simply equating these formulae, it is clearly isogeny invariant.

(b) Let $P_K'$ be the Heegner point for $E'$ defined by $P_K' = \theta (P_K)$. Since $L'(E/K, s) = L'(E'/K,s)$, we have
\begin{equation*}
\frac{\| \omega \|^2 \cdot \hat{h}(P_K)}{\| \omega' \|^2 \cdot \hat{h}(P_K')} = \frac{M^2}{M'^2}.
\end{equation*}
Similarly, from $\mathrm{BSD}_{E/K} = \mathrm{BSD}_{E'/K}$, we get
\begin{equation*}
\frac{\| \omega \|^2 \cdot \hat{h}(P_K)}{\| \omega' \|^2 \cdot \hat{h}(P_K')} = \frac{\# \Sh(E'/K) \cdot C'^2 \cdot [E(K):\ZZ P_K]^2}{\# \Sh(E/K) \cdot C^2 \cdot [E'(K):\ZZ P_K']^2}.
\end{equation*}
Equating, we obtain the equation \ref{eq:isoginv}.

(c) Let $P$ (resp. $P'$) be a generator of the group $E(K)/E(K)_\mathrm{tors}$ (resp. $E'(K)/E'(K)_\mathrm{tors}$), and let $P_K = \nu P$ (resp. $P_K' = \nu' P'$). As $P_K' = \theta(P_K) = \nu \theta(P)$, the index $\nu'$ is divisible by $\nu$. The assumption (i) $\ord_p \# E(K)_\mathrm{tors} = \ord_p \# E(\QQ)_\mathrm{tors}$ implies that $\ord_p [E(K)_\mathrm{tors}: E(\QQ)_\mathrm{tors}] = 0$. By the equation \ref{eq:isoginv}, we have
\begin{equation*}
\frac{u_K^2 \cdot M^2 \cdot C^2 \cdot \# \Sh(E/K)}{\lp \# E(\QQ)_\mathrm{tors} \rp^2 \cdot [E(K)_\mathrm{tors}:E(\QQ)_\mathrm{tors}]^2} = \frac{u_K^2 \cdot M'^2 \cdot C'^2 \cdot \# \Sh(E'/K)}{\lp \frac{\nu'}{\nu} \rp^2 \cdot \lp \# E'(\QQ)_\mathrm{tors} \rp^2 \cdot [E'(K)_\mathrm{tors}:E'(\QQ)_\mathrm{tors}]^2},
\end{equation*}
and by the assumption (ii) the left hand side of the above equation is a $p$-adic integer. Thus,
\begin{align*}
\ord_p \lp u_K \cdot C' \cdot M' \cdot \lp \# \Sh(E'/K) \rp^{1/2} \rp & \ge \ord_p \lp \frac{\nu'}{\nu}  \cdot \lp \# E'(\QQ)_\mathrm{tors} \rp \cdot [E'(K)_\mathrm{tors}:E'(\QQ)_\mathrm{tors}] \rp \\
& \ge \ord_p \# E'(\QQ)_\mathrm{tors}.
\end{align*}
\end{proof}

\begin{remark}
By \cite{GJT} Corollary 4 or \cite{Naj}, Theorem 2, for a given elliptic curve $E$ defined over $\QQ$, there are at most 4 quadratic fields $K$ such that $E(K)_\mathrm{tors} \neq E(\QQ)_\mathrm{tors}$.
\end{remark}

\section{$\torgp \simeq \ZZ/2\ZZ \oplus \ZZ/4\ZZ$}
\label{section:torgp_type_2_4}

In this section, we prove the Main Theorem for the cases when $\torgp$ is isomorphic to $\ZZ/2\ZZ \oplus \ZZ/4\ZZ$.

\begin{theorem}\label{th:torgp_type_2_4}
Suppose that $\torgp$ is isomorphic to $\ZZ/2\ZZ \oplus \ZZ/4\ZZ$. Then the order $8 = \# \torgp$ divides the Tamagawa number $C$ of $E$, except for the curve `15a3', in which case $C \cdot M = 8$.
\end{theorem}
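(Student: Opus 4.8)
The plan is to reduce to a single parametrised family and then to run Tate's algorithm. Curves with $\torgp \simeq \ZZ/2\ZZ \oplus \ZZ/4\ZZ$ are classified by the rational points of the genus-zero modular curve $X_1(2,4)$, and therefore form a one-parameter family; no separate twist parameter is needed, since full $2$-torsion is twist-invariant while a point of exact order $4$ (which necessarily has nonzero $y$-coordinate) is destroyed by every nontrivial quadratic twist. First I would write this family as a minimal integral Weierstrass equation $E_t$ in a rational parameter $t$, record explicit coordinates for the three points of order $2$ and for a point of order $4$, and factor its discriminant $\Delta(t)$; the primes of bad reduction are precisely those dividing the resultants appearing in $\Delta(t)$.

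Next I would run Tate's algorithm on the generic member $E_t$, prime by prime. The organising principle is that the large supply of torsion forces large component groups: at a prime of split multiplicative reduction the component group $\Phi_p$ is cyclic of order $C_p = v_p(\Delta_{\min})$ (with $C_p \in \{1,2\}$ in the non-split case), and the reductions of the torsion points bound this order from below; at a prime of additive reduction $C_p \le 4$, with $\Phi_p$ one of $\ZZ/2\ZZ$, $\ZZ/3\ZZ$, $\ZZ/4\ZZ$, or $\ZZ/2\ZZ \oplus \ZZ/2\ZZ$. Tracking where the eight torsion points reduce—in particular how the order-$4$ point and the three order-$2$ points distribute among the bad primes—partitions the parameter $t$ into finitely many congruence classes (governed by $v_2(t)$, $v_3(t)$, and a few small residue conditions), each yielding a definite reduction pattern. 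For each pattern I would read the local numbers off Tate's algorithm and verify that $8 \mid \prod_{p \mid N} C_p = C$.

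The main obstacle is the additive regime, and especially the behaviour at $p = 2$ and $p = 3$, where the component groups are smallest and the torsion need not spread out enough to guarantee the full factor of $8$. The computation should show that such a deficiency can occur for only finitely many specialisations of $t$, and that, after passage to minimal models, the unique genuine failure is the curve `15a3', where the expected contributions collapse at the additive bad primes. For that single curve the Tamagawa product $C$ is short by a factor of $2$; this is exactly compensated by the Manin constant $M$, which is nontrivial because `15a3' is not the optimal curve in its isogeny class. I would conclude by checking directly that $C \cdot M = 8 = \#\torgp$ for `15a3', which closes the remaining case and completes the case analysis.
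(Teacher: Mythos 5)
Your high-level strategy---Kubert's one-parameter family for curves with $\ZZ/2\ZZ\oplus\ZZ/4\ZZ$ torsion, Tate's algorithm prime by prime, a finite exception list, and the Manin constant rescuing `15a3'---is the same as the paper's. But there is a genuine gap at the central step. Since the component group at a multiplicative prime is cyclic and the image of $\ZZ/2\ZZ\oplus\ZZ/4\ZZ$ in a cyclic group has order at most $4$ (while $C_p\le 4$ at additive primes), torsion considerations at a single bad prime can never force more than $4\mid C_p$: the factor $8$ must be assembled from at least two bad primes. Your plan asserts that finitely many congruence classes of the parameter each "yield a definite reduction pattern," but a congruence class only pins down the behaviour at $2$ (and perhaps $3$); the number of the remaining odd bad primes and their individual contributions depend on how the specialised discriminant factors, which varies without bound within any fixed congruence class. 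In the paper's normalisation $\lambda=(16\alpha^2-\beta^2)/(16\beta^2)$ one gets $C_p=4\,\ord_p\lambda$ at every prime with $\ord_p\lambda>0$ and even $C_q$ at every odd $q$ with $\ord_q\lambda<0$, and the entire content of the theorem is then the Diophantine observation that $16\alpha^2-\beta^2$ is (essentially) the product of the two coprime integers $4\alpha\pm\beta$, so that outside an explicitly computable finite set either two distinct primes divide the numerator of $\lambda$, or one divides the numerator and one divides $\beta$. This coprime-factorisation step is what makes the exceptional set finite; it is exactly the step your proposal leaves as "the computation should show\dots" without an argument, and no amount of congruence bookkeeping in the parameter will produce it.

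Two smaller inaccuracies. First, `15a3' has squarefree conductor $15$ and is therefore semistable, so its deficiency is not a "collapse at the additive bad primes" but simply that the two multiplicative primes $3$ and $5$ contribute only $C=4$ in total. Second, non-optimality of a curve does not by itself force $M\neq 1$: with the paper's normalisation $\pi^{*}\omega=M\cdot 2\pi i f\,d\tau$ one has $M\neq\pm 1$ only when the isogeny from the optimal curve fails to be \'etale (indeed the paper's table in the $\ZZ/4\ZZ$ section lists the non-optimal curve `24a3' with $M=1$). The direct verification of $C\cdot M=8$ for `15a3' that you propose is of course the right way to close that case.
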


From \cite{Ku}, table 3, such elliptic curves can be parametrised by one parameter $\lambda \in \QQ$ by
\begin{equation}\label{eq:Weierstrass_basic_for_type_2_4}
y^2 + xy - \lambda y = x^3 - \lambda x^2,
\end{equation}
where $\displaystyle \lambda = \left( \frac{\alpha}{\beta} \right)^2 - \frac{1}{16} = \frac{16\alpha^2 - \beta^2}{16\beta^2}$, with positive integers $\alpha, \beta$ having no common prime divisor, and $\alpha/\beta \neq  1/4$. The discriminant of the equation is $\Delta = \lambda^4(1+16\lambda) \neq 0$. Note that since we take $\alpha$ and $\beta$ relatively prime, there are no common prime divisor of $16\alpha^2 - \beta^2$ and $16\beta^2$ except 2.

\begin{proposition}
\label{prop:something_good_happened_for_eq_lambda}
Let $p$ be a prime.

\begin{enumerate}
\item
If $m:=\ord_p \lambda >0$, then the reduction of $E$ modulo $p$ is (split) multiplicative of type $\mathrm{I}_{4m}$. Consequently the Tamagawa number at $p$  of $E$ is $C_p = 4m$.

\item
Suppose that $p \neq 2$. If $m := \ord_p \lambda < 0$, then $m$ is always even, and the minimal Weierstrass equation at $p$ is given by
\begin{equation}\label{eq:m-even}
y^2 + p^z xy -up^z y = x^3 -ux^2,
\end{equation}
where $u \in \ZZ_p^\times$ satisfying $\lambda = up^m$ in $\ZZ_p$, and where $z$ is a positive integer. The reduction type of the equation modulo $p$ is $\mathrm{I}_n$ with $n = 2z$, whence $C_p = 2z$.
\end{enumerate}
\end{proposition}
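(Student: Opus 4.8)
The plan is to analyze the Weierstrass equation \eqref{eq:Weierstrass_basic_for_type_2_4} at the prime $p$ via Tate's algorithm, treating the two cases according to the sign of $m = \ord_p \lambda$ separately. For part (a), when $m > 0$, I would first observe that $\lambda \in \ZZ_p$ and examine the reduction of \eqref{eq:Weierstrass_basic_for_type_2_4} modulo $p$. Since $\Delta = \lambda^4(1 + 16\lambda)$, we compute $\ord_p \Delta = 4m + \ord_p(1 + 16\lambda)$; because $\ord_p \lambda = m > 0$, the factor $1 + 16\lambda$ is a unit (at least for $p$ odd; the case $p = 2$ needs a separate check that $16\lambda$ does not cancel the $1$, which holds since $\ord_2(16\lambda) = 4 + m > 0$), so $\ord_p \Delta = 4m$. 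The reduced curve acquires a node, so the reduction is multiplicative of type $\mathrm{I}_n$ with $n = \ord_p \Delta = 4m$. To pin down that it is \emph{split} multiplicative and to confirm $C_p = 4m$ rather than a divisor thereof, I would locate the singular point, check that the slopes of the two tangent directions at the node lie in $k = \FF_p$ (equivalently that the relevant quadratic has roots in $k$), and then invoke the standard fact that $C_p = n$ for split $\mathrm{I}_n$ and the equation is already minimal since $\ord_p$ of the coefficients is too small to permit a reduction step in Tate's algorithm.

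For part (b), with $p \neq 2$ and $m = \ord_p \lambda < 0$, the equation \eqref{eq:Weierstrass_basic_for_type_2_4} is no longer integral at $p$, so the first task is to produce a minimal model. Writing $\lambda = u p^m$ with $u \in \ZZ_p^\times$ and recalling that $\lambda = (16\alpha^2 - \beta^2)/(16\beta^2)$, the negativity of $m$ forces $p \mid \beta$; since $\gcd(\alpha,\beta) = 1$ and $p \neq 2$, we have $p \nmid (16\alpha^2 - \beta^2)$, whence $m = -2\ord_p \beta$ is even. This gives the parity claim directly. To obtain \eqref{eq:m-even}, I would make a change of variables via $[u', r, s, t]$ clearing the denominators: scaling $x \mapsto p^{-m} x'$ (using $m$ even, so $-m/2 \in \ZZ$) and correspondingly $y$, one rewrites \eqref{eq:Weierstrass_basic_for_type_2_4} in the proposed form, with the integer $z := -m/2 > 0$ appearing in the coefficients of $xy$ and $y$, and with $u \in \ZZ_p^\times$ as the reduced value of $\lambda p^{-m}$. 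I would then verify that \eqref{eq:m-even} is integral and minimal at $p$, compute its discriminant, and find $\ord_p \Delta_{\min} = 2z$; as the reduction is again a node, the type is $\mathrm{I}_{2z}$ and $C_p = 2z$ (after confirming splitness, which here follows from the shape of the reduced equation $y^2 = x^3 - \bar{u}x^2$ having a node with rational tangent slopes).

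The main obstacle I anticipate is the bookkeeping of the change of variables in part (b): matching the transformed equation exactly to the stated form \eqref{eq:m-even} and correctly identifying the exponent $z$ and the unit $u$ requires care, since the coefficients $1$, $-\lambda$, and $-\lambda$ in \eqref{eq:Weierstrass_basic_for_type_2_4} scale by different powers of $p$ under $[u',r,s,t]$. A secondary subtlety is the determination of \emph{split} versus non-split multiplicative reduction, which in principle affects whether $C_p$ equals $n$ or a smaller divisor; I would resolve this by examining the tangent cone at the node in each case and confirming the tangent slopes are defined over the residue field. The parity argument in (b) and the minimality of \eqref{eq:m-even} are, by contrast, routine once the valuation $\ord_p \lambda$ is tracked through the relation $\lambda = (16\alpha^2 - \beta^2)/(16\beta^2)$.
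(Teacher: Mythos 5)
Your route coincides with the paper's: both parts are direct applications of Tate's algorithm, your parity argument in (b) (for odd $p$ the numerator and denominator of $\lambda = (16\alpha^2-\beta^2)/(16\beta^2)$ are coprime, so $m = -2\ord_p\beta$) is exactly the paper's, and the change of variables $[p^{-z},0,0,0]$ producing \eqref{eq:m-even} is the one the paper borrows from Lorenzini. Part (a) of your argument is correct as outlined: $\ord_p c_4=0$ and $\ord_p\Delta=4m$, the reduced curve is $y^2+xy=x^3$ with tangent cone $y(y+x)=0$, so the two tangent directions are genuinely rational and the reduction is split $\mathrm{I}_{4m}$ for every $p$, including $p=2$.

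There is, however, a genuine gap in part (b), at the step where you claim splitness ``follows from the shape of the reduced equation $y^2=x^3-\bar{u}x^2$ having a node with rational tangent slopes.'' The tangent cone at that node is $y^2+\bar{u}x^2=0$, so the slopes are $\pm\sqrt{-\bar{u}}$ and lie in $\FF_p$ only when $-\bar{u}$ is a square. Writing $\beta=p^z\beta_0$ one finds $\bar{u}\equiv(\alpha/\beta_0)^2\pmod{p}$, which is a square, so the reduction is split exactly when $p\equiv 1\pmod{4}$; for $p\equiv 3\pmod{4}$ it is non-split, and then $C_p=2$ rather than $2z$, since $n=2z$ is even. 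For instance $\alpha=1$, $\beta=9$ gives non-split $\mathrm{I}_4$ at $p=3$ with $C_3=2$. To be fair, the proposition as printed makes the same overstatement and the paper's one-line proof does not address splitness either; the conclusion that survives in every case --- and the only one used downstream, namely that $C_p$ is even for $p\in T$ --- follows from $n=2z$ being even regardless of whether the reduction is split. You should therefore replace the assertion of rational tangent slopes by the weaker but correct statement that $C_p\in\lbr 2z, 2\rbr$ and in particular $2\mid C_p$.
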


\begin{proof}
(a) This can be shown by directly applying Tate's algorithm (see \cite{advAEC}, \S IV.9) to the Weierstrass equation \eqref{eq:Weierstrass_basic_for_type_2_4}.

(b) Since $\gcd \lp 16\alpha^2 - \beta^2, 16\beta^2 \rp$ is a power of $2$, if $m = \ord_p \lambda  = \ord_p \lp  (16\alpha^2 - \beta^2) / 16\beta^2 \rp <0$ then the exponent $m$ is always even. Changing Weierstrass equation (cf. \cite{Lo}, proof of Proposition 2.4), we get the equation \eqref{eq:m-even}. We use Tate's algorithm again for this equation to obtain the minimality and reduction type.
\end{proof}

Let
\begin{equation*}
S = \lbr p \text{ primes}: \ord_p \lambda > 0 \rbr, \qquad
T = \lbr p \text{ primes}: p \neq 2,\, \ord_p \lambda < 0 \rbr.
\end{equation*}
Proposition \ref{prop:something_good_happened_for_eq_lambda} says that Theorem \ref{th:torgp_type_2_4} is true
when (i) $\# S \ge 2$; or (ii) $\# S = 1$ and $\# T \ge 1$.
Thus the following proposition shows Theorem \ref{th:torgp_type_2_4}.


\begin{proposition}\label{prop:relations_of_S_and_T}
With possible finite number of exceptions, we have $\# S \ge 1$ and moreover if $T = \emptyset$, then $\# S \ge 2$. The exceptions are exactly the following curves: `15a1', `15a3', `21a1', `24a1', `48a3', `120a2', `240a3', `240d5', and `336e4'. But in any case including these exceptions, we have $8 \mid C \cdot M$.
\end{proposition}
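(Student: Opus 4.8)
The plan is to convert the statement into an assertion about the distinct prime divisors of $16\alpha^2 - \beta^2 = (4\alpha - \beta)(4\alpha + \beta)$. Since $\gcd(16\alpha^2-\beta^2,\,16\beta^2)$ is a power of $2$, for an odd prime $p$ one has $p \in S$ exactly when $p \mid 16\alpha^2-\beta^2$ and $p \in T$ exactly when $p \mid \beta$; the prime $2$ is treated separately by computing $\ord_2\lambda = \ord_2(16\alpha^2-\beta^2) - 4 - 2\ord_2\beta$ according to the $2$-adic valuation of $\beta$. I would also record the identity $1 + 16\lambda = (4\alpha/\beta)^2$, so that the remaining bad primes — the odd primes dividing the numerator of $1+16\lambda$ — are precisely the odd primes dividing $\alpha$, and each contributes an \emph{even} Tamagawa factor. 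This is the extra input that compensates when $S$ and $T$ alone are too small.

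For the first assertion ($\#S\ge1$), I would argue that $\#S = 0$ forces $16\alpha^2-\beta^2 = \pm 2^j$, so both factors $4\alpha\mp\beta$ are $\pm$ powers of $2$. Their sum and difference are $8\alpha$ and $2\beta$, and coprimality of $\alpha,\beta$ bounds $\gcd(4\alpha-\beta,\,4\alpha+\beta)$ by a fixed power of $2$; hence the smaller of the two $2$-adic valuations is bounded, and solving the resulting system shows that either $j$ is bounded — giving finitely many $(\alpha,\beta)$ — or $j$ is large, in which case $\ord_2\lambda>0$ and $2\in S$, contradicting $\#S=0$. So $\#S=0$ occurs for only finitely many curves.

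For the second assertion, $T=\emptyset$ forces $\beta=2^b$ with $\alpha$ odd. If $\#S\le1$, then $16\alpha^2-2^{2b}$ has at most one odd prime factor $q$; since the two factors are coprime away from $2$, one of them is $\pm2^e$ and the other $\pm2^{e'}q^f$. For $b\ge3$ this pins down $\alpha = 2^{b-2}\pm1$ together with $2^{b-1}\pm1 = \pm q^f$. When $f\ge2$, Mihailescu's theorem (Catalan) leaves only $2^3+1=3^2$; the delicate case is $f=1$, where $q$ ranges over a possibly infinite family of Fermat- or Mersenne-type primes and no second element of $S$ ever appears. Here I would invoke the supplement from the first paragraph: unless $\alpha=1$, the odd number $\alpha=2^{b-2}\pm1$ has an odd prime factor $r$, which (through $1+16\lambda=(4\alpha/\beta)^2$) produces an extra even Tamagawa factor $C_r$, so $8 \mid 4\cdot C_r \mid C$ after all; and $\alpha=1$ holds only for small $b$. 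Thus the curves for which the conclusion genuinely fails form a finite set.

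The main obstacle is exactly this $f=1$ branch: it is a family in which $\#S$ really does stay equal to $1$, so the clean combinatorial criterion $\#S\ge2$ cannot by itself cut it down, and finiteness is recovered only through the perfect-square structure of $1+16\lambda$. Once an effective bound on $(\alpha,\beta)$ is in hand, I would enumerate the surviving cases with Sage to obtain the nine listed curves, and for each verify $8\mid C\cdot M$ directly from the local reduction data and the computed Manin constant — in particular recovering $C\cdot M=8$ for `15a3'.
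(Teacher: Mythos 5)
Your proposal is essentially correct, and for the one genuinely hard case it takes a different route from the paper. The shared skeleton is the same: factor $16\alpha^2-\beta^2=(4\alpha-\beta)(4\alpha+\beta)$, use coprimality of the two factors away from $2$, and reduce to a case analysis on $\ord_2\beta$; your finiteness argument for $\#S=0$ and your use of Catalan for the $f\ge 2$ branch are sound (the paper handles the latter implicitly by never needing to decide whether $\#S=1$ or $2$). The divergence is in the branch you correctly single out as the obstacle: $\beta=2^b$ with $b\ge 3$, $\alpha=2^{b-2}\pm 1$, and $2^{b-1}\pm1$ prime, where $T=\emptyset$ and $\#S$ really is $1$. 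The paper extracts the missing factor of $2$ at the prime $2$ itself: it applies Tate's algorithm to the model $y^2+2^nxy-2^nuy=x^3-ux^2$ and shows $C_2$ is even (type $\mathrm{I}_k^\ast$ when the model is minimal, type $\mathrm{I}_{2n-8}$ after minimalisation otherwise), then combines this with $4\mid C_p$ for the odd $p\in S$. You instead extract it at an odd prime $r\mid\alpha$ via the identity $1+16\lambda=(4\alpha/\beta)^2$: since $\gcd(\alpha,\beta)=1$, such an $r$ satisfies $\ord_r\lambda=0$, $\ord_r\Delta=2\ord_r\alpha$ and $\ord_r c_4=\ord_r(16\lambda^2+16\lambda+1)=0$, so the reduction is multiplicative of even type $\mathrm{I}_{2\ord_r\alpha}$ and $C_r$ is even whether or not it is split; this is correct and avoids any $2$-adic Tate computation, at the cost of the separate check that $\alpha=2^{b-2}\pm1>1$ outside finitely many $b$. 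A side benefit of your analysis is that it makes explicit something the Proposition's literal wording obscures: on this family the combinatorial assertion ``$T=\emptyset\Rightarrow\#S\ge2$'' genuinely fails infinitely often (e.g.\ $\alpha=9$, $\beta=32$, $\lambda=17/1024$ has $S=\{17\}$, $T=\emptyset$), and only the final conclusion $8\mid C\cdot M$ survives, rescued by the extra even local factor --- at $2$ in the paper, at $r\mid\alpha$ in your version. The concluding Sage enumeration of the nine sporadic curves matches the paper's practice.
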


\begin{proof}
Write $\beta = 2^n \beta'$ with $n \ge 0$ and $\beta'$ odd. The condition $T = \emptyset$ is equivalent to the condition $\beta' = 1$. We divide the proof according to the value of $n$.

Suppose $n=0$.
In this case $16\alpha^2 - \beta^2$ is odd and so $\gcd \lp 16\alpha^2 - \beta^2, 16\beta^2 \rp =1$.
Suppose that there is no prime dividing $16\alpha^2 - \beta^2$. We then have $16\alpha^2 - \beta^2 = \pm 1$.
This is possible only if $\alpha = 0$, a contradiction. For the second statement, assume $\beta = 1$.
In this case, we get $\lambda = (16\alpha^2 - 1)/16$.
If there is only one odd prime $p$ dividing $16\alpha^2 - 1$,
then we must have $4\alpha - 1 =1$, a contradiction ($\alpha \in \ZZ_{>0}$).

Suppose $n=1$. We have $\displaystyle \lambda = \frac{16\alpha^2 -
4\beta'^2}{16\cdot 4 \beta'^2}
 = \frac{4\alpha^2 - \beta'^2}{16\beta'^2}$, and $\gcd \lp 4\alpha^2 - \beta'^2, 16\beta'^2 \rp =1$.
 If there were no odd prime dividing $4\alpha^2 - \beta'^2$, we would have $4\alpha^2 - \beta'^2 = \pm 1$,
 whence $\alpha = 0$, a contradiction. If $\beta=2$ (equivalently $\beta' = 1$),
 and if there were only one prime dividing $4\alpha^2 - \beta'^2$,
 then either one of the relations $2\alpha - 1 =1$ or $2\alpha + 1 = -1$ would hold.
 Thus we must have $\alpha = 1$. In this case we get the curve `48a3', having $C_2 = C_3 = 4$.

Suppose $n \ge 5$. In this case we can take another Weierstrass
equation (cf. Equation \eqref{eq:m-even}) of $E$ of the following
form:
\begin{equation}\label{eq:at2}
y^2 + 2^n xy - 2^n u y = x^3 - ux^2,
\end{equation}
where $u = \alpha^2 - 2^{2n-4}$. This equation has discriminant
$\Delta = (2^{2n} + 16u) 2^{2n} u^4$ and $c_4 = 2^{2n + 4} u + 16
u^2 + 2^{4n}$, so $\ord_2(\Delta) = 2n+4$ and $\ord_2(c_4) = 4$.
Moreover, by \cite{AEC}, Proposition VII.5.5, since its
$j$-invariant has order $8-2n < 0$, $E$ has potentially
multiplicative reduction modulo $2$. If this equation \eqref{eq:at2}
is minimal at the prime 2, then the curve has additive reduction
modulo 2 ($\ord_2(c_4)>0$). Tate's algorithm says that $E$ has
reduction of type $\mathrm{I}_k^\ast$ for some $k$, with Tamagawa
number 2 or 4. Suppose that the equation \eqref{eq:at2} is not
minimal modulo $2$. Then we can transform \eqref{eq:at2} into a
minimal model modulo $2$, which has discriminant of order $2n+4 - 12
= 2n - 8$ at 2 and $c_4$ of order 0. Since the order of the minimal
discriminant is even and $>0$, and since $E$ has multiplicative
reduction ($\ord_2 c_4 = 0$), we have even $C_2$ by Tate's
algorithm. As $C_2$ is even and $4 \mid C_p$ for some odd $p \in S$,
the proof of this case is completed.

Remaining cases ($n = 2, 3,$ and 4) can be shown similarly.
\end{proof}

\section{$\torgp \simeq \ZZ/2\ZZ \oplus \ZZ/2\ZZ$}
\label{section:torgp_type_2_2}

In this section, we prove the Main Theorem for the cases when $\torgp$ is isomorphic to $\ZZ/2\ZZ \oplus \ZZ/2\ZZ$.

\begin{theorem}\label{th:torgp_type_2_2}
Suppose that $\torgp$ is isomorphic to $\ZZ/2\ZZ \oplus \ZZ/2\ZZ$. Then the order $4 = \# \torgp$ divides the Tamagawa number $C$ of $E$, except for two curves `17a2' and `32a2'. For these two cases we have $4 = C \cdot M$.
\end{theorem}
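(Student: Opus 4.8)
The plan is to mimic the proof of Theorem \ref{th:torgp_type_2_4}: parametrise the curves with full rational $2$-torsion, compute reduction types and local Tamagawa numbers with Tate's algorithm, and reduce the assertion $4 \mid C$ to a finite list of curves that can be checked by hand. Concretely, I would take a Weierstrass model $y^2 = x(x-a)(x-b)$ with $a,b \in \ZZ$, suitably normalised (dividing out common square factors by a change of variables $[u,0,0,0]$), so that the three rational points of order $2$ are $(0,0)$, $(a,0)$, $(b,0)$ and $\Delta = 16 a^2 b^2 (a-b)^2$. A bad odd prime $p$ is multiplicative exactly when it divides precisely one of $a,\,b,\,a-b$, and additive when it divides all three.

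The key local input is that \emph{at every odd prime $p$ of multiplicative reduction the Tamagawa number $C_p$ is even}. Indeed, since $E[2]$ is rational over $\QQ_p$, the reduction map $E(\QQ_p)[2] \to E(\QQ_p)/E_0(\QQ_p)$ into the order-$C_p$ component group has kernel $E_0(\QQ_p)[2]$; for multiplicative reduction at odd $p$ this kernel has order exactly $2$ (the formal group contributes no $2$-torsion, and the nonsingular reduction $\FF_p^\times$, or the nonsplit torus, contributes a single point of order $2$). Hence the image has order $2$ and $2 \mid C_p$; equivalently, in the split case $C_p = \ord_p \Delta$ is visibly even. At odd additive primes and at $p=2$ I would run Tate's algorithm directly, paying attention to minimality of the model at $2$ and to the $\mathrm{I}_n^\ast$ cases, to decide when these primes supply an additional factor of $2$.

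It then follows that $4 \mid C$ whenever there are two distinct odd multiplicative primes, or one such prime with $\ord_p \Delta \ge 4$, or one odd multiplicative prime together with an even contribution from $p=2$. The remaining possibility is that $a,\,b,\,a-b$ are supported on $2$ and at most one odd prime occurring to the first power, and that $p=2$ contributes nothing more; this is a finite condition bounding $|a|,|b|$, whose solutions I would enumerate in Sage. For all but finitely many of these $4 \mid C$ still holds; the two genuine failures are `17a2' and `32a2', and for these I would verify by direct computation that $C \cdot M = 4 = \#\torgp$, where $4 \nmid C$ and the deficiency is made up by a non-trivial Manin constant $M$. I expect the endgame to be the main obstacle: the prime $2$, where reduction type, non-minimality and the parity of $C_2$ all interact, and the computation of $M$ for the two exceptional curves, since it is precisely $M \ne 1$ that saves the conjecture there.
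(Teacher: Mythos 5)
Your plan is essentially the paper's proof: take Kubert's model $y^2 = x(x+a)(x+b)$, run Tate's algorithm at each prime dividing $ab(a-b)$, observe that every odd bad prime contributes an even local Tamagawa number, analyse $p=2$ by congruence cases, and reduce to the two exceptional curves where $M=2$ supplies the missing factor. Two steps in your reduction need repair, however. First, a single odd multiplicative prime with $\ord_p \Delta \ge 4$ does \emph{not} force $4 \mid C_p$: for nonsplit reduction of type $\mathrm{I}_n$ one has $C_p = \gcd(2,n) \le 2$ however large $n$ is, so these curves cannot be discharged at that stage and must remain in your residual pool. (The paper correspondingly never extracts more than a factor of $2$ from such a prime; the second factor of $2$ comes from a second bad prime or from an $\mathrm{I}_k^\ast$ fibre at a prime dividing all of $a$, $b$, $a-b$.) Second, the residual condition --- $a$, $b$, $a-b$ supported on $2$ and at most one odd prime --- does not bound $|a|$ and $|b|$, so it is not a finite Sage enumeration; as in the proof of Proposition \ref{prop:relations_of_S_and_T}, one closes this case by an elementary factorization argument (identities of the shape $2\alpha \pm 1 = \pm 1$ forcing the parameters to be small) together with the observation from the $2$-adic table that for large $\ord_2$ the curve has multiplicative reduction of even type at $2$. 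With those two repairs your outline coincides with the paper's argument, whose own endgame is in fact only stated as being ``similar to Proposition \ref{prop:relations_of_S_and_T}''; your component-group justification for the evenness of $C_p$ at odd multiplicative primes is a valid (and slightly more conceptual) substitute for the paper's direct reading of the Kodaira type $\mathrm{I}_{2\ord_p(a)}$.
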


Following \cite{Ku}, we can take a Weierstrass model of the form
\begin{equation}\label{eq:Weierstrass_basic_for_type_2_2}
y^2 = x(x+a)(x+b),
\end{equation}
where $a, b \in \ZZ$ with $a \neq b \neq 0 \neq a$. Note that $a$ and $b$ is in general not relatively prime. The discriminant of the equation \eqref{eq:Weierstrass_basic_for_type_2_2} is $\Delta = 16 (a - b)^2 a^2 b^2$ and $c_4 = 16a^2 - 16 ab + 16 b^2$. Let $c = a-b \neq 0$. If there is a prime $p$ dividing both $a$ and $b$, then by changing the equation via $[p, 0, 0, 0]$ if necessary, we assume $\min \lp \ord_p a, \ord_p b \rp = 1$.

We first investigate the Tamagawa number $C_p$ for primes $p$ dividing $abc$.

\begin{proposition}\label{prop:prime_dividing_a_xor_b}
Let $p$ be a prime. Assume that either (i) $p \mid a$ and $p \nmid bc$; or (ii) $p \mid b$ and $p \nmid ac$. Then we have the following.
\begin{enumerate}
    \item If $p$ is odd, then $E$ has reduction of type $\mathrm{I}_{\ord_p \Delta} = \mathrm{I}_{2\ord_p(a)}$ modulo $p$, with even Tamagawa number at $p$.
    \item Suppose that $p = 2$. If $m:=\ord_2 a = 4$ and if $b \equiv 1 \pmod{4}$, then $E$ has good reduction modulo $2$, whence $C_2 = 1$. Otherwise, $C_2$ is even.
\end{enumerate}
\end{proposition}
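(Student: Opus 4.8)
The plan is to apply Tate's algorithm directly to the Weierstrass equation \eqref{eq:Weierstrass_basic_for_type_2_2} under each of the two symmetric hypotheses, exploiting the fact that the three $2$-torsion points force a predictable factorization of the discriminant. Since the hypotheses (i) and (ii) are interchanged by the involution $a \leftrightarrow b$ (which is an isomorphism of the curve over $\QQ$), it suffices to treat, say, case (i): $p \mid a$, $p \nmid bc$. In this situation $\ord_p \Delta = \ord_p \lp 16(a-b)^2 a^2 b^2 \rp = 2\ord_p(a)$ for odd $p$, since $p$ divides neither $b$ nor $c = a - b$, and $p \neq 2$.

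For part (a), I would observe that when $p$ is odd and $p \mid a$ but $p \nmid bc$, the point $(0,0)$ is the unique singular point of the reduction $\tilde E$ modulo $p$, and the reduction is nodal (multiplicative) because $c_4 = 16(a^2 - ab + b^2) \equiv 16 b^2 \not\equiv 0 \pmod p$. Hence $E$ has multiplicative reduction of type $\mathrm I_n$ with $n = \ord_p \Delta = 2\ord_p(a)$, and the Tamagawa number is either $n$ or $\gcd(n,2)$ depending on whether the reduction is split or non-split. In either case $C_p$ is \emph{even} because $n = 2\ord_p(a)$ is even, which is exactly the claim. The only point requiring care is confirming that $\ord_p \Delta$ is genuinely $2\ord_p(a)$ and not larger — but this is immediate from $\gcd(p, bc) = 1$.

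For part (b), with $p = 2$, the analysis is more delicate because the factor $16 = 2^4$ in $\Delta$ and in $c_4$ interferes with the valuations, and because the model \eqref{eq:Weierstrass_basic_for_type_2_2} need not be minimal at $2$. I would run Tate's algorithm (or equivalently invoke the classification via the $2$-adic behaviour of $c_4, c_6, \Delta$). The special subcase $m = \ord_2 a = 4$ and $b \equiv 1 \pmod 4$ should be handled by producing an explicit change of variables via some $[u,r,s,t]$ with $u = 2$ that transforms \eqref{eq:Weierstrass_basic_for_type_2_2} into an equation with good reduction at $2$; here the hypotheses are engineered precisely so that the nonminimality is realized and the resulting minimal model is smooth modulo $2$, giving $C_2 = 1$. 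In all other subcases I would show the minimal discriminant still has even order at $2$ (or that the reduction type forced by Tate's algorithm, such as $\mathrm I_n^\ast$, carries an even Tamagawa number), yielding even $C_2$.

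\textbf{The main obstacle} I anticipate is the $p = 2$ case of part (b): the $2$-adic valuations of $a$ and $b$ interact with the built-in factor of $16$ in a way that produces several subcases (according to $\ord_2 a$, $\ord_2 b$, and the residues of $a, b \pmod 4$ or $\pmod 8$), some of which require a nonminimal-to-minimal transformation before the reduction type can be read off. Organizing these subcases cleanly — and in particular isolating the single good-reduction exception $(m = 4,\ b \equiv 1 \pmod 4)$ from the neighbouring cases that instead give additive reduction with even Tamagawa number — is where the real bookkeeping lies, and is best carried out by a careful but routine application of Tate's algorithm (confirmed by the Sage computations mentioned in the introduction).
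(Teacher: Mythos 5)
Your proposal is correct and follows essentially the same route as the paper: reduce to case (i) by the $a \leftrightarrow b$ symmetry, read off type $\mathrm{I}_{2\ord_p(a)}$ for odd $p$ from $\ord_p\Delta = 2\ord_p(a)$ and $p \nmid c_4$, and for $p=2$ run Tate's algorithm case by case, passing to a non-minimal-to-minimal change of variables with $u=2$ (the paper uses $y^2+xy = x^3 + \frac{a+b-1}{4}x^2 + \frac{ab}{16}x$) to exhibit good reduction exactly when $m=4$ and $b \equiv 1 \pmod 4$. The paper likewise leaves the $p=2$ bookkeeping to a summary table, so nothing essential is missing from your plan.
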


\begin{proof}
We only give the proof for the case (i). By the symmetry of the roles of $a$ and $b$ in the equation, the case (ii) follows immediately.

(a) This is immediate from Tate's algorithm.

(b) Suppose that $2 \mid a$ and $2 \nmid bc$. We do a case-by-case study. In order to help readers to re-construct proofs of the results in the following table, we remark that we mostly apply Tate's algorithm to the Weierstrass equation \eqref{eq:Weierstrass_basic_for_type_2_2}, while for the case $m=4$ and $b \equiv 1 \pmod{4}$ and for $m \ge 5$, we apply the algorithm to another Weierstrass equation $\displaystyle y^2 + xy = x^3 + \frac{a+b-1}{4}x^2 + \frac{ab}{2^4} x$.

\begin{center}
\begin{tabular}{|c|c|c|c|}
\hline
$m$ & $b \mod{4}$ & Reduction Type of $E$ at $p=2$ & $C_2$ \\ \hline\hline
$1$ & $1$ or $3$ & $\mathrm{III}$ & 2 \\ \hline
\multirow{2}{*}{$2$} & $1$ & $\mathrm{I}_n^\ast$ & 2 or 4 \\ \cline{2-4}
 & $3$ & $\mathrm{I}_0^\ast$ & 2 \\ \hline
\multirow{2}{*}{$3$} & $1$ & $\mathrm{III}^\ast$ & 2 \\ \cline{2-4}
 & $3$ & $\mathrm{I}_n^\ast$ & 2 or 4 \\ \hline
\multirow{2}{*}{$4$} & $1$ & $\mathrm{I}_0$ (good) & 1 \\ \cline{2-4}
 & $3$ & $\mathrm{I}_n^\ast$ & 2 or 4 \\ \hline
$ \ge 5$ & $1$ or $3$ & $\mathrm{I}_{2m - 8}$ & even \\ \hline
\end{tabular}
\end{center}
\end{proof}

\begin{proposition}\label{prop:prime_dividing_c}
Let $p$ be a prime such that $p \mid c$ and $p \nmid ab$.
\begin{enumerate}
    \item If $p$ is odd, then $E$ has reduction of type $\mathrm{I}_{\ord_p \Delta} = \mathrm{I}_{2\ord_p(c)}$ modulo $p$, with even Tamagawa number at $p$.
    \item Suppose that $p = 2$. If $m:=\ord_2 c = 4$ and if $a \equiv b \equiv 3 \pmod{4}$, then $E$ has good reduction modulo $2$, whence $C_2 = 1$. Otherwise, $C_2$ is even.
\end{enumerate}
\end{proposition}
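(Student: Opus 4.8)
The plan is to treat the two parts separately, and in particular to reduce the delicate case $p=2$ entirely to the already-proved Proposition~\ref{prop:prime_dividing_a_xor_b} by a single integral change of variables, thereby avoiding a fresh run of Tate's algorithm at $2$.

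For part (a) I would argue directly with Tate's multiplicative-reduction criterion. Since $p$ is odd and $p \nmid ab$, the formula $\Delta = 16(a-b)^2a^2b^2 = 16c^2a^2b^2$ gives $\ord_p \Delta = 2\ord_p c > 0$. To rule out additive reduction, note that $p \mid c = a-b$ forces $a \equiv b \pmod p$, so
\[
c_4 = 16(a^2 - ab + b^2) \equiv 16 a^2 \pmod p,
\]
which is a $p$-adic unit because $p$ is odd and $p \nmid a$. In particular the model is minimal at $p$, so $E$ has reduction of type $\mathrm{I}_n$ with $n = \ord_p \Delta = 2\ord_p c$. As $n$ is even, $C_p$ is even in both the split case ($C_p = n$) and the non-split case ($C_p = \gcd(2,n) = 2$), which is the assertion.

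For part (b) the key observation is that the hypotheses $2 \mid c$, $2 \nmid ab$ can be converted into the hypotheses of Proposition~\ref{prop:prime_dividing_a_xor_b}(b) by making a change of variables via $[1,-b,0,0]$, which transforms \eqref{eq:Weierstrass_basic_for_type_2_2} into
\[
y^2 = x(x+c)(x-b).
\]
This is again of the form \eqref{eq:Weierstrass_basic_for_type_2_2}, with the pair $(a,b)$ replaced by $(c,-b)$ and with corresponding difference parameter $c-(-b) = a$. Because $[1,-b,0,0]$ has $u=1$, it is an integral unimodular isomorphism and hence preserves the reduction type and Tamagawa number at every prime. For the new curve the hypotheses read $2 \mid c$ and $2 \nmid (-b)\cdot a$, i.e.\ exactly case (i) of Proposition~\ref{prop:prime_dividing_a_xor_b} with $p \mid a$ and $p \nmid bc$ in the new coordinates, so I may invoke that proposition directly. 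It then remains to translate its good-reduction criterion back: Proposition~\ref{prop:prime_dividing_a_xor_b}(b) yields good reduction precisely when $\ord_2 c = 4$ and $-b \equiv 1 \pmod 4$, that is $m = \ord_2 c = 4$ and $b \equiv 3 \pmod 4$, and $C_2$ even otherwise. To match the stated condition, observe that when $m=4$ we have $4 \mid c = a - b$, so $a \equiv b \pmod 4$; thus $b \equiv 3 \pmod 4$ already forces $a \equiv 3 \pmod 4$, and the good-reduction locus is exactly $a \equiv b \equiv 3 \pmod 4$, as claimed.

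The only genuinely delicate input, namely the $p=2$ analysis of Tate's algorithm, is bypassed entirely by this reduction; what I expect to be the main point to verify carefully is that the substitution is integral (so that reduction type and Tamagawa number are unchanged) and that the congruence conditions are tracked correctly through the replacement $b \mapsto -b$, including the observation that $m=4$ collapses the two apparently different good-reduction conditions into one.
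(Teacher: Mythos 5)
Your proof is correct, and for part (b) it takes a genuinely different route from the paper's. The paper also translates the origin --- via $[1,-a,0,0]$, obtaining $y^2 = x^3 + (-2a+b)x^2 + a(a-b)x$ --- but then re-runs Tate's algorithm at $2$ from scratch on that model (plus an auxiliary model $y^2+xy=\cdots$ for the subcases $a\equiv b\equiv 3 \pmod{4}$, $m\ge 4$), recording the outcome in a fresh table of reduction types. You instead observe that the translate $[1,-b,0,0]$ lands back in the standard form \eqref{eq:Weierstrass_basic_for_type_2_2} with $(a,b)$ replaced by $(c,-b)$ and difference parameter $a$, so that the hypotheses here become exactly case (i) of Proposition \ref{prop:prime_dividing_a_xor_b}, whose conclusion can be quoted verbatim; since the translation is integral with $u=1$ it indeed preserves reduction type and Tamagawa number. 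Your bookkeeping is right: the good-reduction condition $\ord_2 c = 4$, $-b\equiv 1\pmod{4}$ becomes $\ord_2 c = 4$, $b\equiv 3\pmod{4}$, and $4 \mid c$ forces $a\equiv b\pmod{4}$, so this coincides with the stated condition $a\equiv b\equiv 3\pmod{4}$. What your approach buys is economy and a conceptual explanation of why the tables in Propositions \ref{prop:prime_dividing_a_xor_b} and \ref{prop:prime_dividing_c} mirror each other; what the paper's direct computation buys is the explicit list of Kodaira types ($\mathrm{III}$, $\mathrm{I}_k^\ast$, $\mathrm{III}^\ast$, $\mathrm{I}_{2m-8}$) in each subcase. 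Your part (a) (unit $c_4$ since $a\equiv b\pmod{p}$, hence minimal and multiplicative of type $\mathrm{I}_{2\ord_p c}$ with even Tamagawa number) is essentially the paper's one-line appeal to Tate's algorithm, made explicit.
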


\begin{proof}
We make a change of variables via $[1,-a,0,0]$, to get another equation
\begin{equation}\label{eq:for_looking_c}
y^2 = x^3 + (-2a+b)x^2 + a(a-b)x.
\end{equation}

(a) Immediate from Tate's algorithm applied to equation \eqref{eq:for_looking_c}.

(b) Let $p=2$. Similar as above proposition, the results from Tate's algorithm applied to the equation \eqref{eq:for_looking_c} are summarised as follows. In particular, when dealing with the cases $a \equiv b \equiv 3 \pmod{4}$ and $m \ge 4$, we use the equation $\displaystyle y^2 + xy = x^3 + \frac{-2c -b -1}{4} x^2 + \frac{ac}{16} x$ instead.

\begin{center}
\begin{tabular}{|c|c|c|c|}
\hline
$m$            & $a$ and $b \mod{4}$                                        & Reduction Type of $E$ at $p=2$   & $C_2$ \\ \hline \hline
1              & any                                                        & $\mathrm{III}$                   & 2     \\ \hline
$\ge 2$      & $a \equiv 1 \pmod{4}$ or $b \equiv 1 \pmod{4}$ (or both) & $\mathrm{I}_k^\ast$ for some $k$ & 2 or 4  \\ \hline
$2$ or $3$ & \multirow{3}{*}{$a \equiv b \equiv 3 \pmod{4}$}            & $\mathrm{III}^\ast$              & 2     \\ \cline{1-1} \cline{3-4}
$4$        &                                                            & $\mathrm{I}_0$ (good)            & 1     \\ \cline{1-1} \cline{3-4}
$5$        &                                                            & $\mathrm{I}_{2m-8}$              & even  \\ \hline
\end{tabular}
\end{center}
\end{proof}


\begin{proposition}\label{prop:commonprime_dividing_a_and_b}
Let $p$ be a prime dividing two of $a$, $b$, or $c$. Then clearly it divides the third. By changing variables in the equation \eqref{eq:Weierstrass_basic_for_type_2_2} via $[p,0,0,0]$ if necessary, we assume $\min \lp \ord_p a, \ord_p b \rp = 1$. Then $E$ has reduction of type $\mathrm{I}_k^\ast$ for some $k$, with even Tamagawa number.
\end{proposition}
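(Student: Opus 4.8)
The plan is to run Tate's algorithm on the model \eqref{eq:Weierstrass_basic_for_type_2_2} (for odd $p$) and on a suitable minimal model (for $p=2$), and to show that in every case one reaches the step producing a Kodaira type $\mathrm{I}_k^*$, whose Tamagawa number is always $2$ or $4$. First I would record the invariants of \eqref{eq:Weierstrass_basic_for_type_2_2}: since $a_1=a_3=a_6=0$, $a_2=a+b$, $a_4=ab$, we get $b_2=4(a+b)$, $b_4=2ab$, $b_6=0$, $b_8=-a^2b^2$, together with $\Delta=16a^2b^2(a-b)^2$ and $c_4=16(a^2-ab+b^2)$. After normalising via $[p,0,0,0]$ so that $\min(\ord_p a,\ord_p b)=1$, and using the symmetry of $a$ and $b$ in the equation, I may assume $\ord_p a=1$ and $\ord_p b\ge 1$; since $p$ divides all three of $a,b,c$, each of $a+b$, $ab$, $b_2$, $b_8$ is divisible by $p$.

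For odd $p$ I would check that Tate's algorithm passes Steps 1--5 directly: the reduction of \eqref{eq:Weierstrass_basic_for_type_2_2} modulo $p$ is $y^2=x^3$ with its cusp at the origin, $\ord_p b_2\ge 1$ forces additive reduction (so we are not in type $\mathrm{I}_n$), and $a_6=0$, $\ord_p b_6=\infty$, $\ord_p b_8=2\ord_p a+2\ord_p b\ge 4$ dispatch Steps 3--5. At Step 6 the associated cubic factors as
\begin{equation*}
P(T)=T^3+\tfrac{a+b}{p}T^2+\tfrac{ab}{p^2}T = T\Bigl(T+\tfrac{a}{p}\Bigr)\Bigl(T+\tfrac{b}{p}\Bigr),
\end{equation*}
whose three roots $0,\,-a/p,\,-b/p$ all lie in the residue field. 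Because $\ord_p a=1$, the root $-a/p$ is a unit and is distinct from $0$, so $P$ has at worst a \emph{double} root; hence the reduction is of type $\mathrm{I}_k^*$ for some $k\ge 0$. If the three roots are distinct (type $\mathrm{I}_0^*$) they are all rational, giving $C_p=4$; otherwise $P$ has a genuine double root (type $\mathrm{I}_n^*$, $n\ge 1$), for which $C_p\in\{2,4\}$. In either case $C_p$ is even, as required. A short check that $\ord_p c_4=2$ (with at most $\ord_p\Delta\le 10$ in the residual sub-case) shows the model is already minimal at every odd $p$, so these conclusions are valid.

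The prime $p=2$ is the main obstacle and needs separate treatment. On the one hand the argument should go through cleanly: the cubic $P(T)=T(T+a/2)(T+b/2)$ has all its roots in $\FF_2=\{0,1\}$, so by the pigeonhole principle it must have a repeated root, which can only be double (never triple, as $a/2$ is a unit and $-a/2\equiv 1\neq 0$), again pinning down type $\mathrm{I}_n^*$ with $n\ge 1$ and hence even $C_2$. On the other hand --- and this is the real difficulty --- the model \eqref{eq:Weierstrass_basic_for_type_2_2} is \emph{not} minimal at $2$: one computes $\ord_2 c_4=6$ and, since $\ord_2 a=1$ forces $\{\ord_2 b,\ord_2 c\}$ to satisfy $\ord_2 b+\ord_2 c\ge 3$, one gets $\ord_2\Delta\ge 12$, so the usual non-minimality thresholds are met and minimality must be decided via Kraus's conditions before any reduction type can be read off. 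I would therefore first pass to an integral minimal model at $2$ through an explicit $[2,r,s,t]$ change of variables (analogous to the auxiliary equations used in Propositions \ref{prop:prime_dividing_a_xor_b} and \ref{prop:prime_dividing_c}, but with a different normalisation since here both $a$ and $b$ are even), and then apply Tate's algorithm to that model, verifying that it still reduces to a cusp with additive reduction and reaches Step 6 with a cubic having a double root over $\FF_2$. Organising the finitely many congruence sub-cases for $a,b\bmod 8$ and confirming minimality in each is where the bulk of the routine but delicate work lies.
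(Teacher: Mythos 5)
Your approach is the same as the paper's: run Tate's algorithm on \eqref{eq:Weierstrass_basic_for_type_2_2}, observe that Steps 1--5 pass (cusp at the origin, $a_6=0$, $\ord_p b_8\ge 4$, $b_6=0$), and factor the Step~6 cubic as $T\bigl(T+\tfrac{a}{p}\bigr)\bigl(T+\tfrac{b}{p}\bigr)$; since $\min(\ord_p a,\ord_p b)=1$ one of the two nonzero roots is a unit, so the cubic has at worst a double root, forcing type $\mathrm{I}_k^\ast$ and $C_p\in\{2,4\}$. For odd $p$ this is complete and correct, and it recovers exactly the paper's case split ($\mathrm{I}_0^\ast$ with $C_p=4$ when $0,-a/p,-b/p$ are distinct, $\mathrm{I}_n^\ast$ with $C_p=2$ or $4$ otherwise).

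The flaw is in your treatment of $p=2$, where you assert that the model ``is \emph{not} minimal at $2$'' because $\ord_2 c_4=6\ge 4$ and $\ord_2\Delta\ge 12$, and you then defer the proof to an unperformed congruence analysis after passing to a putative minimal model. This is wrong on two counts. First, for $p=2$ the conditions $\ord_2 c_4\ge 4$ and $\ord_2\Delta\ge 12$ are only \emph{necessary} for non-minimality, not sufficient (that equivalence holds for $p\ge 5$; at $p=2,3$ one needs Kraus's criterion). Indeed the model is typically minimal here: e.g.\ $a=2$, $b=6$ gives $\ord_2\Delta=12$, $\ord_2 c_4=6$, yet no admissible $[2,r,s,t]$ produces an integral model. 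Second, and more to the point, Tate's algorithm is valid for an arbitrary integral Weierstrass equation and only detects non-minimality at Step~11, which is reached only after the Step~6 cubic has a \emph{triple} root; if the algorithm terminates at Step~6 or~7, the equation was automatically minimal at $p$ and the type and Tamagawa number read off are correct. Your own pigeonhole argument shows the cubic has exactly one double root over $\FF_2$ (the root $-a/2$ being a unit rules out a triple root), so the algorithm terminates at Step~7 with type $\mathrm{I}_n^\ast$ and even $C_2$ --- no change of model and no case analysis modulo $8$ is needed. The ``real difficulty'' you identify is illusory; deleting that paragraph (and the incidental claim that $\ord_p c_4=2$ for odd $p$, which can also fail, e.g.\ when $p\equiv 1\pmod 3$ divides $a'^2-a'b'+b'^2$, but is likewise unnecessary for the same reason) leaves a complete proof.
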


\begin{proof}
If $m \neq n$, then we may assume $m > n = 1$ without any loss of generality. By Tate's algorithm, in this case $E$ has reduction of type $\mathrm{I}_k^\ast$ with Tamagawa number 2 or 4. If $m = n = 1$, then we can write $a= pa'$ and $b=pb'$ with $(a',p) = (b',p) = 1$. Hence,
\begin{itemize}
    \item if $a' \not\equiv b' \pmod{p}$, then $E$ has reduction of type $\mathrm{I}_0^\ast$ modulo $p$ with Tamagawa number 4;
    \item if $a' \equiv b' \pmod{p}$, then $E$ has reduction of type $\mathrm{I}_k^\ast$ modulo $p$ with Tamagawa number 2 or 4.
\end{itemize}
\end{proof}



Recall that $E$ is an elliptic curve defined by the equation $y^2 = x(x+a)(x+b)$ with discriminant
$\Delta = 16a^2 b^2 c^2 \neq 0$ where $a, b, c:=a-b \in \ZZ$. We also have assumed that $\min \lp \ord_p a, \ord_p b \rp \le 1$ for all primes $p$. Let
\begin{equation*}
S : = \lbr p \text{ primes}: \ord_p a >0,\, \ord_p b>0 \rbr.
\end{equation*}
If $\# S \ge 2$, then by Proposition
\ref{prop:commonprime_dividing_a_and_b}, then the Tamagawa number
$C$ of $E$ is divisible by 4. Thus the following proposition shows
Theorem \ref{th:torgp_type_2_2}.

\begin{proposition}
Suppose that $\# S \le 1$. Then $4 \mid C$ with only two exceptions: `17a2' and `32a2'. But in both exceptions, we have $C =M =2$.
\end{proposition}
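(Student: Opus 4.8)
The plan is to show that the bad primes of $E$ almost always force $\ord_2 C \ge 2$, and then to dispose of the few remaining configurations by an elementary Diophantine argument backed by a finite computer check. I would begin with the organizing observation that every odd prime dividing $\Delta = 16 a^2 b^2 c^2$ contributes an even local Tamagawa number. Indeed, an odd prime $p \mid abc$ divides at least one of $a, b, c$, and since $c = a - b$ it divides two of them if and only if it divides all three. If it divides exactly one, Propositions \ref{prop:prime_dividing_a_xor_b}(a) and \ref{prop:prime_dividing_c}(a) give multiplicative reduction of type $\mathrm{I}_{2e}$ (with $e$ the relevant valuation), so $2 \mid C_p$; if it divides all three it lies in $S$, and Proposition \ref{prop:commonprime_dividing_a_and_b} gives type $\mathrm{I}_k^\ast$, again with $2 \mid C_p$. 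Moreover, inspecting the tables in Propositions \ref{prop:prime_dividing_a_xor_b}--\ref{prop:commonprime_dividing_a_and_b} shows that $C_2$ is odd exactly when $E$ has good reduction at $2$. Hence $\ord_2 C$ is at least the number of odd primes dividing $abc$, plus one more if $E$ has bad reduction at $2$.

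Consequently $4 \nmid C$ can occur only in one of two situations, which I would treat in turn. In Case (A), no odd prime divides $abc$, so $abc$ is $\pm$ a power of $2$ and $C = C_2$; here the relation $a = b + c$ with $|a|, |b|, |c|$ all powers of $2$ forces, after passing to a minimal model by a scaling $x \mapsto u^2 x$, the unordered triple $\{|a|, |b|, |c|\}$ to be $\{1,1,2\}$ or $\{2,2,4\}$. This leaves finitely many curves, and Tate's algorithm (run in Sage) singles out `32a2' as the only one with $4 \nmid C$. In Case (B), exactly one odd prime $p$ divides $abc$, and we must have $C_p = 2$ together with good reduction at $2$ (so $C_2 = 1$).

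Case (B) is where I expect the real work to lie. I would feed the good-reduction-at-$2$ criteria of Propositions \ref{prop:prime_dividing_a_xor_b}(b) and \ref{prop:prime_dividing_c}(b)---which force exactly one of $a, b, c$ to have $2$-adic valuation $4$ while the rest stay odd---into the hypothesis that $p$ is the only odd prime present. Writing each of $a, b, c$ as $\pm 2^s p^i$ and using $a = b + c$ then collapses to an equation of the shape $|p^i \mp 1| = 16$, since a nontrivial odd factor other than a power of $p$ would introduce a second odd prime; the only solution is $p = 17$, $i = 1$, producing `17a2' (with multiplicative reduction $\mathrm{I}_2$ at $17$, hence $C_{17} = 2$). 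The finitely many companion configurations---in particular those with $p \in S$, where $p$ divides all of $a, b, c$---yield additive reduction of type $\mathrm{I}_n^\ast$; since $E$ then carries full rational $2$-torsion, one checks that $C_p = 4$, so $4 \mid C$ and they are not exceptions. The main obstacle is precisely guaranteeing that Case (B) hides no infinite family: the argument rests entirely on the good-reduction condition pinning a valuation of exactly $4$, which converts the single-odd-prime hypothesis into the finite equation $|p^i \mp 1| = 16$.

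Finally, for the two exceptional curves I would read off $C = 2$ from the reduction types computed above ($\mathrm{I}_2$ at $17$ for `17a2', and the type at $2$ for `32a2'), and obtain $M = 2$ from the known Manin constants of these curves, so that $C \cdot M = 4 = \# \torgp$, as claimed.
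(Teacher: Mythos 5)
Your overall strategy is the same as the paper's (the paper's proof is literally ``similar to Proposition \ref{prop:relations_of_S_and_T}'': a case analysis on the prime divisors of $abc$ driven by Propositions \ref{prop:prime_dividing_a_xor_b}--\ref{prop:commonprime_dividing_a_and_b}, with the surviving configurations pinned down by a Diophantine/finite check), and your reduction to Case (A) ($abc$ a power of $2$) and Case (B) (one odd prime plus good reduction at $2$, forcing $|p^i \mp 1| = 16$ and hence $p = 17$) is sound and correctly isolates `32a2' and `17a2'.

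There is, however, one concretely false step: in Case (B) with $p \in S$ you dismiss the type-$\mathrm{I}_n^\ast$ configurations by asserting that full rational $2$-torsion forces $C_p = 4$. This is not true. Every curve in this section has full rational $2$-torsion, yet the paper's own Proposition \ref{prop:commonprime_dividing_a_and_b} only concludes $C_p \in \{2,4\}$ when $a' \equiv b' \pmod p$ --- and that alternative is genuine: for example $y^2 = x(x+3)(x+12)$ is the ramified quadratic twist by $3$ of $y^2 = x(x+1)(x+4)$, which has \emph{non-split} reduction $\mathrm{I}_2$ at $3$ (as $-1$ is not a square mod $3$), so the twist has type $\mathrm{I}_2^\ast$ at $3$ with $C_3 = 2$ despite carrying full rational $2$-torsion. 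So your lemma cannot be invoked. The gap is repairable because the surviving configurations are so rigid: with $p \in S$, exactly one of $a,b,c$ even with $2$-adic valuation $4$, and $\min\lp \ord_p a, \ord_p b\rp = 1$, the relation $a = b + c$ forces (up to signs and permutation) $\lbrace |a|,|b|,|c| \rbrace = \lbrace 16\cdot 17,\ 17,\ 17^2 \rbrace$, i.e.\ the quadratic twist of `17a2' by $17$; there one checks directly that the reduction of `17a2' at $17$ is \emph{split} $\mathrm{I}_2$ (since $-1$ is a square mod $17$), so the twist has $C_{17} = 4$ and is not an exception --- but this must be verified for the actual curve, not deduced from the torsion subgroup. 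With that substitution (or simply by running Tate's algorithm on the finitely many surviving $(a,b)$, as you propose for Case (A)), the argument closes.
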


\begin{proof}
Proofs are similar to Proposition \ref{prop:relations_of_S_and_T}.
\end{proof}

\section{$\torgp \simeq \ZZ/4\ZZ$}
\label{section:torgp_type_4}

\begin{theorem}\label{th:torgp_type_4}
If $E$ is an elliptic curve defined over $\QQ$, having rational torsion subgroup $\torgp$ isomorphic to $\ZZ/4\ZZ$, then the order $4 = \# \torgp$ divides $u_K \cdot C \cdot M \cdot \lp \# \Sh(E/K) \rp^{1/2}$.
\end{theorem}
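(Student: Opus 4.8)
The plan is to bound a single $2$-adic valuation. Since $\#\torgp = 4 = 2^2$, the statement reduces to the inequality $\ord_2\lp u_K\cdot C\cdot M\cdot(\#\Sh(E/K))^{1/2}\rp \ge 2$. As $\torgp\simeq\ZZ/4\ZZ$ has a unique point of order $2$, we have $E(\QQ)[2]\simeq\ZZ/2\ZZ$, so the entire apparatus of \S\ref{section:preliminaries_part_1} is available. I would fix Kubert's one-parameter normal form (\cite{Ku}, table 3) for curves carrying a rational $4$-torsion point and put it in the shape $y^2=x^3+Ax^2+Bx$ with $(0,0)$ the $2$-torsion point $2R$, so that $\Delta=16B^2(A^2-4B)$. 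Because $(0,0)$ is then divisible by $2$ in $E(\QQ)$, its image $B$ under the $x$-coordinate descent map is trivial, i.e.\ $B$ is a perfect square --- a rigidity I would exploit below. By Lorenzini's Proposition, $2\mid C$ for every such curve outside a finite explicit list, so generically $\ord_2 C\ge1$ and it suffices to produce one further factor of $2$.

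First I would run Tate's algorithm across the family, in the manner of Propositions \ref{prop:something_good_happened_for_eq_lambda}--\ref{prop:commonprime_dividing_a_and_b}, expressing $\ord_2 C$ and the odd-prime contributions as functions of the parameter. In the bulk of the resulting subcases one finds $4\mid C$ outright, settling the theorem. This isolates the families with $\ord_2 C\le1$, together with the sporadic curves `15a7', `15a8', `17a4' (the $k=4$ exceptions of Lorenzini's Proposition) where $\ord_2 C=0$; for `15a8' and `17a4' one checks directly that the Manin constant restores $4\mid C\cdot M$.

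For the families with $\ord_2 C=1$ I would extract the missing factor of $2$ from $\Sh(E/K)$. With $\rk E(K)=1$ and $\dim_{\FF_2}E(\QQ)[2]=1$, Theorem \ref{th:Kr} turns the bound $\dim_{\FF_2}\Sh(E/K)[2]\ge1$ into $\sum_\ell i_\ell+\dim_{\FF_2}\Phi+\dim_{\FF_2}NS'\ge4$, and since $\dim_{\FF_2}NS'\ge0$ it is enough to make $\sum_\ell i_\ell+\dim_{\FF_2}\Phi$ reach $4$. The local indices come from Proposition \ref{prop:computing_local_norm_indices}: $i_\infty$ from the sign of $\Delta_{\mathrm{min}}$, each odd good prime ramified in $K$ contributing $\dim_{\FF_2}E[2](k)$, and $i_2$ from a Hilbert symbol, with the Heegner hypothesis controlling which primes ramify. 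For the term $\dim_{\FF_2}\Phi$, Propositions \ref{prop:str_of_Phi} and \ref{prop:kernel_of_selmer} reduce $\Phi\neq0$ to exhibiting a square class $b\in\Sel^\phi(E/\QQ)\cap\Sel^{\phi_d}(E_d/\QQ)$ with $b\notin G=\langle A^2-4B\rangle$, under the identification $H^1(\QQ,E[2])=H^1(\QQ,E_d[2])$ of Proposition \ref{prop:canonical_isomorphism_of_torsion_groups}. Here the squareness of $B$ rigidifies the $2$-isogeny descent (\cite{AEC}, Chapter X): a square-free divisor of $A^2-4B$ singled out by the parametrisation is the natural candidate, and I would verify its everywhere-local solubility for both $E$ and its twist prime-by-prime. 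When the indices and $\Phi$ together reach $4$ this gives $2\mid(\#\Sh(E/K))^{1/2}$, which with $2\mid C$ completes such a curve.

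The remaining cases are the genuinely \emph{exceptional families} --- those for which $\sum_\ell i_\ell+\dim_{\FF_2}\Phi<4$, so that $C\cdot(\#\Sh(E/K))^{1/2}$ still carries too small a power of $2$ --- together with `15a7'. For these I would invoke isogeny invariance: each such $E$ admits a rational $2$- or $4$-isogeny, and I would establish the inequality on a conveniently chosen member $E_0$ of the isogeny class, one whose Tamagawa product or Manin constant already supplies the factor $4$, then propagate it to $E$ via Proposition \ref{prop:isoginv}(c). The delicate input is the hypothesis $\ord_2\#E_0(K)_{\mathrm{tors}}=\ord_2\#E_0(\QQ)_{\mathrm{tors}}$, i.e.\ that $2$-power torsion does not grow over $K$, which I would secure using that $E_0(K)_{\mathrm{tors}}\neq E_0(\QQ)_{\mathrm{tors}}$ for only finitely many $K$ (the Remark following Proposition \ref{prop:isoginv}); note too that $u_K$ supplies a free factor of $2$ when $K=\QQ(\sqrt{-1})$. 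I expect the principal obstacle to be the uniform control of $\Phi$ across each infinite family: producing a single square class simultaneously in the Selmer groups of $E$ and $E_d$ and excluding it from $G$ requires a careful two-curve $2$-descent whose local conditions must be analysed at every prime, while the isogeny bookkeeping for the exceptional families --- matching the $p$-adic valuations and torsion hypotheses of Proposition \ref{prop:isoginv}(c) --- is the secondary difficulty.
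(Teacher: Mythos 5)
Your proposal follows essentially the same route as the paper: reduce to a $2$-adic bound, run Tate's algorithm over the Kubert family to get $4 \mid C$ generically, use Kramer's formula together with an explicit two-curve $2$-descent (local images of $\delta_\ell$ and $\delta_\ell^d$, with the kernel $G = \langle A^2-4B\rangle$ excluded) to force $\dim_{\FF_2}\Sh(E/K)[2] \ge 1$ in the cases with $\ord_2 C \le 1$, and treat the residual exceptional family by passing to the $2$-isogenous curve with full rational $2$-torsion and invoking Proposition \ref{prop:isoginv}(c), with $u_K = 2$ absorbing the case $K = \QQ(\sqrt{-1})$. This is exactly the paper's decomposition in \S\ref{section:torgp_type_4}, so no further comment is needed.
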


\subsection{Tamagawa numbers}

In order to prove Theorem \ref{th:torgp_type_4}, we first consider Tamagawa numbers of $E$.

From \cite{Ku}, table 3, such elliptic curves can be parametrized by one parameter $\lambda$ by
\begin{equation*}
y^2 + xy - \lambda y = x^3 - \lambda x^2,
\end{equation*}
where the discriminant of the equation $\lambda^4(1+16\lambda) \neq 0$. This is the same as in section \ref{section:torgp_type_2_4}, but without further restriction on $\lambda$. Let $\lambda = \alpha / \beta$, with $\alpha, \beta \in \ZZ$ and $\gcd(\alpha, \beta)=1$. By Proposition \ref{prop:something_good_happened_for_eq_lambda} (a), we may assume $\alpha = 1$. So we begin with the following Weierstrass equation
\begin{equation}\label{eq:type_4_only_beta}
y^2 + \beta xy - \beta^2 y = x^3 - \beta x^2,
\end{equation}
with $\beta \in \ZZ$. Note that this curve has discriminant $\Delta = (16+\beta)\beta^7$ and $c_4 = (16+16\beta+\beta^2)\beta^2$. If $\beta = \pm 1$, then we have either `15a8' or `17a4', both of which have $M=4$. So we may assume that there is at least one prime dividing $\beta$.

Let $p$ be a prime dividing $\beta$, and let $m : = \ord_p \beta > 0$. Write $\beta = p^m u$, for some $u \in \ZZ$ with $\gcd(u,p)=1$. Using Tate's algorithm applied to Weierstrass equations $\displaystyle y^2 + p^{z+1} xy - p^{z+2}u^{-1}y = x^3 -pu^{-1}x^2$ (when $m = 2z+1$ is odd) or $\displaystyle y^2 + p^{z+1} xy - p^{z+2}u^{-1}y = x^3 -pu^{-1}x^2$ (when $m=2z$ is even), we can figure out the reduction types and Tamagawa numbers at primes $p \mid \beta$ for $E$.

\begin{center}
\begin{tabular}{|c|c|c|c|c|}
\hline
$m$                                            & $p$                      & additional conditions             & Reduction Type of $E$ at $p$ & $C_p$ \\ \hline \hline
$m = 2z + 1$ for $z \in \ZZ_{\ge 0}$           & any                      &                                   & $\mathrm{I}_1^\ast$          & 4     \\ \hline
\multirow{3}{*}{$m = 2z$ for $z \in \ZZ_{>0}$} & $p \neq 2$               &                                   & $\mathrm{I}_{2z}$            & even  \\ \cline{2-5}
                                               & \multirow{2}{*}{$p = 2$} & $u \equiv 3 \pmod{4}$ and $m = 8$ & $\mathrm{I}_0$ (good)        & 1     \\ \cline{3-5}
                                               &                          & otherwise                         & bad                          & even  \\ \hline
\end{tabular}
\end{center}
So, in the sequel, we assume
\begin{itemize}
	\item $\ord_p \beta$ is even for all prime $p$;
	\item the number of odd primes dividing $\beta$ is $\le 1$.
\end{itemize}

Moreover, if $\ell$ is an odd prime dividing $\beta + 16$, then $E$ has reduction of type $I_{\ord_\ell (\beta+16)}$ at $\ell$.\footnote{This can be also shown by Tate's algorithm, applied to the equation $\displaystyle y^2 + \beta x y -(\beta+16)^2 y = x^3 - (\beta+ 96) x^2 + 192(\beta+16) x - 128(\beta+24)(\beta+16)$ for $E$.} We furthermore assume throughout this section, that
\begin{itemize}
	\item if $\ell$ is an odd prime dividing $\beta + 16$, then $\ord_\ell \lp \beta + 16 \rp$ is odd. 
\end{itemize}

Suppose that there is no odd prime $p$ dividing $\beta$, i.e., $\beta = \pm 2^m$ for some positive integer $m$. As we can see in the above table, in order to avoid $4 \mid C$, we may assume $m = 2z$ is even. Applying Tate's algorithm to the Weierstrass equation \eqref{eq:type_4_only_beta}, we have the following results.

\begin{center}
\begin{tabular}{|c|c|c|c|}
\hline
$\beta$                       & Curve          & Tamagawa Number             & Manin Constant \\ \hline \hline
$2^2$                         & `40a3'         & $C_2 \cdot C_5 = 2 \cdot 1$ & 2              \\ \hline
$2^4$                         & `32a4'         & $C_2 = 2$                   & 2              \\ \hline
$2^{2z}$ with $z \ge 3$       &                & $C_2=4$                     &                \\ \hline
$-2^2$                        & `24a4'         & $C_2 \cdot C_3 = 2 \cdot 1$ & 2              \\ \hline
$-2^4$                        & singular curve &                             &                \\ \hline
$-2^6$                        & `24a3'         & $C_2 \cdot C_3 = 2 \cdot 1$ & 1              \\ \hline
$-2^8$                        & `15a7'         & $C_3 \cdot C_5 = 1 \cdot 1$ & 2              \\ \hline
$-2^{2z}$ with $z \ge 5$ even &                & $C_2 = 2(z-4)$              &                \\ \hline
$-2^{2z}$ with $z \ge 5$ odd  &                & $C_2 = 2(z-4)$               &                \\ \hline
\end{tabular}
\end{center}
So when $|\beta|$ is a power of 2, then we only need to deal with the cases $\beta = -2^{2z}$ with (i) $z = 4$ or (ii) $z \ge 3$ being odd.

\subsection{$\lp \# \Sh(E/K) \rp^{1/2}$}

In this subsection, we shall see $2 \mid \lp \# \Sh(E/K) \rp^{1/2}$, for various remaining cases left from considerations about Tamagawa numbers. Our main job is to show $\sum i_\ell + \dim \Phi \ge 4$ (notations from subsection \ref{subsection:Kramer}). Then,
\begin{equation*}
\boxed{\sum i_\ell + \dim \Phi \ge 4} \Longrightarrow \boxed{ \dim_{\FF_2} \Sh(E/K)[2] \ge 1} \Longrightarrow \boxed{2 \mid \lp \# \Sh(E/K) \rp^{1/2}}.
\end{equation*}
The first implication follows from Kramer's theorem (see subsection \ref{subsection:Kramer}), and the last implication is due to Kolyvagin's theorem \cite{Kol}.

From the above subsection, we only need to deal with the cases when $\beta$ has at most one odd prime divisor. First, we consider the case where $\beta$ is actually a power of an odd prime.

\begin{proposition}
Suppose that $\beta = p^m$ for some $m >0$. By the previous subsection, we assume 
\begin{itemize}
	\item $m = 2z$ for some positive integer $z$; and
	\item for all odd prime $\ell$ dividing $\beta + 16$, $\ord_\ell \Delta_\mathrm{min} = \ord_\ell \lp \beta + 16 \rp$ is odd.
\end{itemize}
Then we have $\dim_{\FF_2} \Sh(E/K)[2] \ge 1$, i.e., Theorem \ref{th:torgp_type_4} is true, except for a family of curves defined by the equation
\begin{equation*}
y^2 + p^{z} xy - p^{z} y = x^3 - x^2,
\end{equation*}
with $p^{2z} + 16 = \ell^k$ being prime powers.
\end{proposition}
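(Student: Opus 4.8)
The plan is to apply Kramer's criterion (Theorem \ref{th:Kr}): since $\rk E(K)=1$ and $E(\QQ)[2]\simeq\ZZ/2\ZZ$ give $\dim_{\FF_2}E(\QQ)[2]=1$, the formula reduces to the statement that $\sum i_\ell+\dim_{\FF_2}\Phi\ge 4$ forces $\dim_{\FF_2}\Sh(E/K)[2]\ge 1$ (discarding the nonnegative term $\dim NS'$); combined with the evenness of $C_p$ recorded in the table above, this completes Theorem \ref{th:torgp_type_4}. First I would dispose of the local norm indices. Since $\beta=p^{2z}>0$ we have $\Delta=(\beta+16)\beta^{7}>0$, so Proposition \ref{prop:computing_local_norm_indices}(1) gives $i_\infty=1$. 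Every bad prime of $E$ — namely $p$ and the odd primes $\ell\mid\beta+16$ — splits in $K$ by the Heegner hypothesis, hence is \emph{unramified} and contributes $i_\ell=0$; the remaining contributions come only from primes ramifying in $K$, which are necessarily primes of good reduction, and each such prime contributes at least $1$ (an odd ramified prime $q$ contributing $i_q=\dim_{\FF_2}E[2](\FF_q)\in\{1,2\}$). As $|\disc(K)|>1$, at least one such ramified prime is always present.

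Next I would extract an element of $\Phi$ from the rational $4$-torsion. Completing the square and translating the rational $2$-torsion point to the origin puts $E$ in the form $v^2=u^3+Au^2+Bu$ with $B=\beta^2$ and $A=\beta(\beta+8)/4$, whence $A^2-4B=\beta^3(\beta+16)/16\equiv\beta+16\pmod{(\QQ^\times)^2}$ and so $G=\langle\beta+16\rangle$ in $\sqfr{\QQ}$ (Proposition \ref{prop:kernel_of_selmer}). The rational point $P$ of order $4$ has $u$-coordinate $-\beta\equiv-1$, so its image under the descent map is the class $-1\in\sqfr{\QQ}$; because $B=\beta^2$ is a perfect square the kernel controlling the relevant descent is trivial, and in any case $-1\notin G$ since $\beta+16>0$. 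Hence $-1$ yields a nonzero class in $\Sel^2(E/\QQ)$. To land $-1$ in $\Phi$ I would then show $-1\in\Sel^{\phi_d}(E_d/\QQ)$ by verifying that the twisted homogeneous space $-N^2=M^4+Ad\,M^2e^2+\beta^2d^2e^4$ is everywhere locally solvable: at the infinite place this follows from $\beta>0$, since the quadratic in $M^2$ then has two positive roots and the right-hand side takes negative values; at primes where $d$ is a local square it follows from solvability for $E$ itself; and at the finitely many remaining places — $2$ and the primes dividing $d$ — it reduces to a Hilbert-symbol computation. This gives $\dim_{\FF_2}\Phi\ge 1$.

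It then remains to gain the final dimension. Writing $\beta+16=\ell_1^{e_1}\cdots\ell_r^{e_r}$ with all $e_i$ odd, the non-exceptional hypothesis is exactly that $r\ge 2$. If $K$ has a ramified prime $q$ with $\left(\frac{\beta+16}{q}\right)=1$, then $i_q=2$ and $i_\infty+i_q+\dim_{\FF_2}\Phi\ge 1+2+1=4$. In the complementary case, where every ramified prime is inert in $\QQ(\sqrt{\beta+16})$ and hence contributes only $i_q=1$, I would instead produce a \emph{second} everywhere-local class supported on the $\ell_i$ — for instance the class of $\ell_1$, which lies outside $G=\langle\ell_1\cdots\ell_r\rangle$ precisely because $r\ge 2$ — whose membership in $\Sel^2(E_d/\QQ)$ at the ramified prime should be forced by the same non-residue condition that produced $i_q=1$; this raises $\dim_{\FF_2}\Phi$ to $2$ and again yields a total of at least $4$. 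Conversely, when $\beta+16=\ell^k$ is a prime power, $G=\langle\ell\rangle$ exhausts the classes supported on the unique bad prime, no independent second class exists, and the inequality genuinely fails, producing the exceptional family $y^2+p^zxy-p^zy=x^3-x^2$.

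The step I expect to be the main obstacle is this last balancing act: proving uniformly, over \emph{all} admissible $K$, that the deficiency in the local norm indices (the case $i_q=1$) is always exactly offset by the emergence of an extra everywhere-local Selmer class, together with the delicate Hilbert-symbol bookkeeping for the twist $E_d$ at $2$ and at the ramified primes. Showing that the prime-power condition on $\beta+16$ is \emph{precisely} the obstruction — rather than merely one sufficient condition for failure — is where I anticipate the argument to be most technical.
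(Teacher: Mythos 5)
There is a genuine gap, and it sits at the heart of your argument: the class you extract from the rational $4$-torsion point does \emph{not} lie in $\Phi$, because it fails the local condition for the twist $E_d$ at the archimedean place. Concretely, with $E$ put in the form $v^2=u(u^2+Au+B)$, one has $A>0$, $B>0$ and $A^2-4B>0$, so the two nonzero roots of the cubic are negative and the $4$-torsion point $P$ (with $u(P)^2=B$, and in fact $u(P)=-\sqrt{B}$, giving the class $-1$) lies on the bounded real component. For the twist $E_d$ with $d<0$, however, the nonzero roots of $u(u^2+Adu+Bd^2)$ are $-d\bigl(A\mp\sqrt{A^2-4B}\bigr)/2>0$, so \emph{every} real point of $E_d$ has $u\ge 0$ and the image of $E_d(\RR)/2E_d(\RR)$ in $\sqfr{\RR}$ is trivial. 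Hence the restriction of your class to $\RR$, which projects to $-1\in\sqfr{\RR}$, is not in the local image for $E_d$, and $-1\notin\Sel^2(E_d/\QQ)$. Your own homogeneous space confirms this: in $-N^2=M^4-AdM^2e^2+Bd^2e^4$ the roots in $M^2$ are $de^2\bigl(A\pm\sqrt{A^2-4B}\bigr)/2<0$, so the right-hand side is positive definite on $M^2\ge0$ while the left-hand side is $\le 0$; the sign analysis in your proposal is reversed. Since your entire accounting ($\dim_{\FF_2}\Phi\ge 1$ unconditionally, plus one more unit from either $i_q=2$ or a second class) rests on this element, the argument collapses; the subsequent claim that a class supported on $\ell_1$ is ``forced'' into $\Sel^2(E_d/\QQ)$ by the non-residue condition at the ramified prime is also asserted rather than proved.

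The paper's proof proceeds differently and is consistent with the failure above: it never claims $\dim_{\FF_2}\Phi\ge 1$ unconditionally. After reducing to the model $y^2=x^3+(p^{2z}+8)x^2+16x$ (good reduction at $2$, $\Delta_{\mathrm{min}}=p^{2z}(p^{2z}+16)$, and $G=\langle p^{2z}+16\rangle$ as you also find), it writes down the local images $\Ima\delta_\ell$ and $\Ima\delta_\ell^d$ for the $\phi$- and $\phi_d$-descents — note $\Ima\delta_\infty=\{1\}$, so only \emph{positive} classes can arise this way — and observes via the sum-of-two-squares theorem that every odd prime divisor of $p^{2z}+16$ is $\equiv 1\pmod 4$. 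It then runs a case analysis on $d\in\{-2,\,-q,\,-2q,\,-qq'\}$ (all other shapes give $\sum i_\ell\ge 4$ outright), and in each case uses quadratic reciprocity together with the Heegner hypothesis to exhibit a positive class ($p$, or an odd prime $\ell\mid p^{2z}+16$, or $pq$) lying in $\Sel^\phi(E/\QQ)\cap\Sel^{\phi_d}(E_d/\QQ)$ and outside $G$. The argument fails exactly when $p\equiv 3\pmod 4$ and $p^{2z}+16$ has a single odd prime divisor, which is what produces the exceptional family handled later by isogeny invariance. If you want to salvage your approach, you would need to replace the $4$-torsion class by one of these positive classes supported on the odd bad primes, at which point you are essentially reproducing the paper's case analysis.
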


\begin{remark}
For the exceptional family, Theorem \ref{th:torgp_type_4} is also true. This will be shown in the following subsection \ref{subsection:exceptionalZmod4}.
\end{remark}

\begin{proof}
We begin with the following equation: $y^2 + p^{2z} xy - p^{4z} y = x^3 - p^{2z} x^2.$ By a change of variables via $[(1/2)p^{z},0,0,0]$, we get $y^2 + 2p^{z} xy - 8p^{z} y = x^3 - 4x^2.$ Making another change of variables via $[1,4,-p^{z},0]$, we get $y^2 = x^3 + (p^{2z}+8)x^2 + 16x.$ The last equation has discriminant $\Delta = 2^{12}(p^{2z} + 16)p^{2z}$ and $c_4 = 16p^{4z} + 256p^{2z} + 256$. Note that the minimal discriminant of $E$ is given by $\Delta_\mathrm{min} = (p^{2z} + 16)p^{2z}$; in particular, $E$ has good reduction modulo $2$. 

Let $\phi$ be the isogeny $E \to E':= E/E(\QQ)[2]$. (Note that $E(\QQ)[2] \simeq \ZZ/2\ZZ$.) Following \cite{Go}, we compute the Selmer group $\Sel^\phi (E/\QQ)$. For each prime $\ell$ (including $\infty$), we denote by $\delta_\ell$ the map $E'(\QQ_\ell)/ \phi \lp E(\QQ_\ell) \rp \to H^1(\QQ_\ell, E[\phi])$. Since $\Sel^\phi(E/\QQ) \subset H^1(\QQ, E[\phi]) \simeq \sqfr{\QQ}$, the elements of $\Sel^\phi(E/\QQ)$ are those classes of $b \in \QQ^\times$ such that their restrictions $b \in H^1(\QQ_\ell, E[\phi]) \simeq \sqfr{\QQ_\ell}$ are contained in the image $\Ima \delta_\ell$. So by considering the images $\Ima \delta_\ell$, we can figure out which classes are in the Selmer group. For more details of this paragraph, see subsection \ref{subsection:Kramer}.

These local images are given as follows.
\begin{itemize}
    \item $\Ima\delta_\infty = \lbrace 1 \rbrace$.
    \item $\Ima\delta_\ell = \ZZ_\ell^\times \QQ_\ell^{\times 2} / \QQ_\ell^{\times 2}$ for odd primes $\ell \nmid \Delta$.
    \item $\Ima\delta _\ell = \sqfr{\QQ_\ell}$ for odd prime $\ell \mid \Delta$, and $\ell \neq p$.
    \item $\Ima \delta_p = \begin{cases}
    \sqfr{\QQ_p} & \text{ if } p \equiv 1 \pmod{4},\\
    \ZZ_p^\times \QQ_p^{\times 2} / \QQ_p^{\times 2} & \text{ if } p \equiv 3 \pmod{4}.
    \end{cases}$
    \item $\Ima\delta_2 = \lbrace 1, 5 \rbrace \subset \sqfr{\QQ_2}$.
\end{itemize}

Here are some remarks on the odd primes dividing $\Delta$. Since $p^{2z} + 16$ is a sum of two squares, so by the famous theorem on the sum of two squares, if $\ell \equiv 3 \pmod{4}$ divides $\left( p^{2z} + 16 \right)$, then $\ord_\ell \left( p^{2z} + 16 \right)$ must be even. However, we assumed that the exponent $\ord_\ell \left( p^{2z} + 16 \right)$ is always odd. Hence any prime divisor $\ell$ of $\left( p^{2z} + 16 \right)$ must satisfy $\ell \equiv 1 \pmod{4}$.

Let $d$ be a negative, squarefree integer. We now compute the sum of local norm indices $\sum i_\ell$. Note that $i_\infty = 1$. After excluding obvious cases giving $\sum i_\ell \ge 4$, we have the following four cases:
\begin{itemize}
    \item $d = -2$;
    \item $d = -q$ for an odd prime $q$;
    \item $d = -2q$ for an odd prime $q$;
    \item $d = -qq'$ for odd primes $q$, $q'$.
\end{itemize}

Suppose first that $d = -2$. As $\left( \Delta_\mathrm{min}, d \right)_{\QQ_2} = \left( (p^{2z'} + 16)p^{2z}, -2 \right)_{\QQ_2} = \left( 1, -2 \right)_{\QQ_2} = 1$, we have $\sum i_\ell = 3$. Now we compute the Selmer group $\Sel^{\phi_d} (E_d/\QQ)$, where $E_d$ is the quadratic twist of $E$ by $d$, and $\phi_d: E_d \to E_d'$ is the corresponding 2-cyclic isogeny. We denote by $\delta^d_\ell$ the corresponding homomorphism $E_d'(\QQ_\ell) / \phi_d \lp E_d(\QQ_\ell) \rp \to H^1(\QQ_\ell, E_d[\phi_d])$.  Local images $\Ima \delta^d_\ell$ are given as follows.
\begin{itemize}
    \item $\Ima\delta_\infty^d = \sqfr{\RR}$.
    \item $\Ima\delta_\ell^d = \ZZ_\ell^\times \QQ_\ell^{\times 2} / \QQ_\ell^{\times 2}$ for any odd prime $\ell \nmid \Delta$.
    \item $\Ima\delta_\ell^d = \sqfr{\QQ_\ell}$ for any odd prime $\ell \mid \Delta$, with $\ell \neq p$.
    \item $
    \Ima \delta_p^d = \begin{cases}
    \sqfr{\QQ_p} & \text{ if } p \equiv \pm 1 \pmod{8},\\
    \ZZ_p^\times \QQ_p^{\times 2} / \QQ_p^{\times 2} & \text{ if } p \equiv \pm 5 \pmod{8}.
    \end{cases}$
    \item $\Ima\delta_2^d = \lbrace 1, -2 \rbrace$.
\end{itemize}
By Heegner hypothesis, for any prime $\ell \mid \Delta_\mathrm{min}$, we have $\quadsym{-2}{\ell} = \quadsym{-1}{\ell} \quadsym{2}{\ell} = 1$. This implies that such an $\ell$ is congruent to either $1$ or $-5$ modulo $8$. However, by the sum of two squares theorem mentioned above, we must have $\ell \equiv 1 \pmod{8}$ if $\ell \mid \Delta_\mathrm{min}$ and if $\ell \neq p$. If $p \equiv 1 \pmod{8}$, then the image of $p$ is contained in $\Phi$ and is non-trivial, by Proposition \ref{prop:kernel_of_selmer}, and the assumptions we made in the statement of current proposition. So suppose that $p \equiv -5 \pmod{8}$. If there are two distinct odd primes $\ell$ and $\ell'$ dividing $p^{2z} + 16$, then the image of $\ell$ or equivalently of $\ell'$ is contained in $\Phi$ and is non-trivial. So for these cases, we have $\sum i_\ell + \dim_{\FF_2} \Phi \ge 4$. If there is only one odd prime dividing $p^{2z} + 16$, this will be covered in the following subsection \ref{subsection:exceptionalZmod4}.

Suppose that $d = -q$ for some odd prime $q$. Suppose first that $q \equiv 1 \pmod{4}$, i.e. $d \equiv 3 \pmod{4}$. As $\disc(\QQ(\sqrt{d}) |\QQ) = 4d = -4q$, the prime $2$ is ramified in $K = \QQ(\sqrt{d})$. Since $\left( \Delta_\mathrm{min}, d \right)_{\QQ_2} = \left( 1, -q \right)_{\QQ_2} = 1$ as $-q \equiv -1$ or $-5 \pmod{8}$, we have $i_2 = 2$. Since $i_\infty = 1$ and $i_q \ge 1$, we always have $\sum i_\ell \ge 4$.

Now assume that $d = -q$ with a prime $q \equiv 3 \pmod{4}$, then $d \equiv 1 \pmod{4}$. In this case the prime $2$ is unramified in $K$. So we have $i_2 = 0$. Let us consider the Selmer group $\Sel^{\phi_d}(E_d/\QQ)$.
\begin{itemize}
    \item $\Ima \delta_\infty^d = \sqfr{\RR}$.
    \item $\Ima \delta_\ell^d = \ZZ_\ell^\times \QQ_\ell^{\times 2} / \QQ_\ell^{\times 2}$ for any odd prime $\ell \nmid \Delta$, $\ell \neq q$.
    \item $\Ima \delta_\ell^d = \sqfr{\QQ_\ell}$ for any odd prime $\ell \mid \Delta$, and $\ell \neq p$.
    \item $\Ima \delta_p^d = \begin{cases}
    \sqfr{\QQ_p} & \text{ if } p \equiv 1 \pmod{4}, \\
    \ZZ_p \QQ_p^{\times 2} / \QQ_p^{\times 2} & \text{ if } p \equiv 3 \pmod{4}.
    \end{cases}$
    \item $\Ima \delta_q^d = \begin{cases}
    \lbrace 1, qu \rbrace \subset \sqfr{\QQ_q} & \text{ if } q \nmid \left( p^{2z} + 8 \right) \text{ and } \left( \dfrac{p^{2z} + 16}{q} \right) = 1, \\
    \sqfr{\QQ_q} & \text{ otherwise,}
    \end{cases}$ for some $u \in \ZZ_q^\times$.
    \item $\Ima \delta_2^d = \lbrace 1, 5 \rbrace$.
\end{itemize}
Note that for any odd prime $\ell \mid \Delta$, we have $1 = \left( \dfrac{-q}{\ell} \right) = \left( \dfrac{-1}{\ell} \right) \left( -1 \right)^{\frac{q-1}{2}\frac{\ell-1}{2}} \left( \dfrac{\ell}{q} \right) = \left( \dfrac{\ell}{q} \right)$. If $p \equiv 1 \pmod{4}$, then the image of $p$ is contained in $\Phi$ and is non-trivial. Even if $p \equiv 3 \pmod{4}$, if there are at least two odd prime divisors of $\Delta_\mathrm{min}$ apart from $p$, then we also have $\dim_{\FF_2} \Phi \ge 1$, i.e., $\sum i_\ell + \dim_{\FF_2} \Phi \ge 4$. If there is only one odd prime dividing $p^{2z} + 16$, this will be covered in the following `exceptional case' \ref{subsection:exceptionalZmod4}.

Assume $d=-2q$. We have $i_\infty = 1$ always. Note that $\left( \Delta_\mathrm{min}, d \right)_{\QQ_2} = \left( p^{2z} \left( p^{2z} + 16 \right), -2q \right)_{\QQ_2} = \left( 1, -2q \right)_{\QQ_2} = 1$, whence $i_2 = 2$. Since $i_q \ge 1$, we always have $\sum i_\ell \ge 4$.

Finally, assume $d = -qq'$. If the prime $2$ is ramified in $K = \QQ(\sqrt{d})$, then surely we have $\sum_\ell i_\ell \ge 4$. Hence, we must assume the other, i.e., $2$ is unramified, which means that $d \equiv 1 \pmod{4}$. Without loss of generality, we then assume $q \equiv 1 \pmod{4}$ and $q' \equiv 3 \pmod{4}$. Moreover, we further assume $i_q = i_{q'} = 1$, i.e., $ \left( \dfrac{ p^{2z} + 16 }{q} \right) = \left( \dfrac{ p^{2z} + 16 }{q'} \right)= -1$. Now consider the local images of $\Sel^{\phi_d}(E_d / \QQ)$ as follows.
\begin{itemize}
    \item $\Ima \delta_\infty^d = \sqfr{\RR}$.
    \item $\Ima \delta_\ell^d = \ZZ_\ell^\times \QQ_\ell^{\times 2} / \QQ_\ell^{\times 2}$ for any odd prime $\ell \nmid \Delta$, $\ell \neq q$.
    \item $\Ima \delta_\ell^d = \sqfr{\QQ_\ell}$ for any odd prime $\ell \mid \Delta$ and $\ell \neq p$.
    \item $\Ima \delta_p^d = \begin{cases}
    \sqfr{\QQ_p} & \text{ if } p \equiv 1 \pmod{4}, \\
    \ZZ_p \QQ_p^{\times 2} / \QQ_p^{\times 2} & \text{ if } p \equiv 3 \pmod{4}.
    \end{cases}$
    \item $ \Ima \delta_q^d = \begin{cases}
    \sqfr{\QQ_q} & \text{ if } q \nmid \left( p^{2z} + 8 \right), \\
    \lbrace 1, qu \rbrace \subset \sqfr{\QQ_q} & \text{ if } q \mid \left( p^{2z} + 8 \right),
    \end{cases}$    for some $u \in \ZZ_q^\times$.
    \item $\Ima \delta_{q'}^d = \sqfr{\QQ_q}$.
    \item $\Ima \delta_2^d = \lbrace 1, 5 \rbrace$.
\end{itemize}
Note that for any odd primes $\ell$ dividing $\Delta$, we get
\begin{equation*}
1 = \left( \dfrac{-qq'}{\ell} \right) = \left( \dfrac{-1}{\ell} \right) \left( -1 \right)^{\frac{\ell-1}{2}\frac{q-1}{2}} \left( \dfrac{\ell}{q} \right)\left( -1 \right)^{\frac{\ell-1}{2}\frac{q'-1}{2}} \left( \dfrac{\ell}{q'} \right) = \left( \dfrac{\ell}{q} \right) \left( \dfrac{\ell}{q'} \right)
\end{equation*}
and thus we have either $\left( \dfrac{\ell}{q} \right) = \left(\dfrac{\ell}{q'} \right) = 1$ or $\left( \dfrac{\ell}{q} \right) = \left(\dfrac{\ell}{q'} \right) = -1$. Suppose first that $p \equiv 1 \pmod{4}$. If $\left( \dfrac{p}{q} \right) = 1$, then we are done, since the image of $p$ in $\Phi$ is non-trivial. If $\left( \dfrac{p}{q} \right) = -1$, then the image of either $p$ or $pq$ in $\Phi$ is non-trivial. Now, suppose that $p \equiv 3 \pmod{4}$. If there are at least two distinct prime divisors of $p^{2z} + 16$, then among those divisors, at least one $\ell$ must have $\left( \dfrac{\ell}{q} \right) = 1$, since $\left( \dfrac{ p^{2z} + 16 }{q} \right) = -1$. For these cases we have $\sum i_\ell + \dim_{\FF_2} \Phi \ge 4$. If there is only one odd prime dividing $p^{2z} + 16$, this will be covered in the following `exceptional case' \ref{subsection:exceptionalZmod4}.

So far, we have shown that for any cases of $d$, we obtain $\sum i_\ell + \dim_{\FF_2} \Phi \ge 4$ with a family of exceptions. Thus by Kramer's formula, we have $4 \mid C \cdot \lp \# \Sh(E/K) \rp^{1/2}$ for the curves not in the exceptional family.
\end{proof}

\begin{proposition}
Suppose that $\beta = (-1)^s2^m p^{m'}$ for some $s \in \lbr 0, 1 \rbr$ and $m, m' \geq 0$. By the considerations of the above subsection and by the above proposition, the remaining cases are further divided by the following three cases.
\begin{itemize}
\item
$s=1$, $m=2z$ for some $z=3,4$ or  odd $z \geq 5$ and $m'=0$,

\item
$s=1$, $m=0$ and $m'=2z$ for some $z \in \ZZ_{>0}$, or

\item
$s=1$, $m = 8$ and $m' = 2z$ for some odd $z \in \ZZ_{>0}$.
\end{itemize}
Furthermore by the above subsection, we assume for all odd prime $\ell$, $\ord_\ell \lp \beta + 16 \rp$ is either zero or odd. For each such case, we have $\dim_{\FF_2} \Sh(E/K)[2] \ge 1$, i.e., Theorem \ref{th:torgp_type_4} is true.
\end{proposition}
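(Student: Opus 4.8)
The plan is to reduce, as in the preceding proposition, to Kramer's formula (Theorem \ref{th:Kr}): since $\rk E(K) = 1$ and $\dim_{\FF_2} E(\QQ)[2] = 1$, it suffices to prove $\sum i_\ell + \dim_{\FF_2} \Phi \ge 4$, which forces $\dim_{\FF_2} \Sh(E/K)[2] \ge 1$ and hence $2 \mid \lp \# \Sh(E/K) \rp^{1/2}$. In each of the three listed cases the Tamagawa-number table of the previous subsection already gives $2 \mid C \cdot M$ (from $C_p$ even at the odd prime $p$ in cases two and three, and from $C_2$ even or $M$ even in case one), so this inequality yields $4 \mid u_K \cdot C \cdot M \cdot \lp \# \Sh(E/K) \rp^{1/2}$ and proves Theorem \ref{th:torgp_type_4}.

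First I would put $E$ into $2$-descent form. Writing $\beta = -w^2$ with $w = 2^z$, $w = p^z$, or $w = 2^4 p^z$ in cases one, two, and three respectively, a change of variables via $[w/2, 0, 0, 0]$ followed by $[1, -4, w, 0]$ carries \eqref{eq:type_4_only_beta} to
\begin{equation*}
y^2 = x^3 + (w^2 - 8) x^2 + 16 x,
\end{equation*}
so that $A = w^2 - 8$, $B = 16$, and $A^2 - 4B = w^2(w^2 - 16)$. Thus the generator of the group $G$ of Proposition \ref{prop:kernel_of_selmer} is the class of $w^2 - 16 = -(\beta + 16)$ in $\sqfr{\QQ}$, which I will use to test non-triviality of elements of $\Phi$. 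This model has $\Delta = 2^{12} w^2(w^2 - 16)$ and $c_4 = 16(w^4 - 16 w^2 + 16)$, from which one reads off $\Delta_\mathrm{min}$ and the reduction at the relevant primes: the prime $2$ is good in cases two and three, while in case one it must be treated separately (its behaviour depends on $z$), and $i_\infty = 1$ exactly when $w > 4$, which holds outside a couple of sporadic small curves.

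Next I would carry out the case analysis on $d$ in the style of the previous proof. The primes dividing $\Delta_\mathrm{min}$ lie among $2$, $p$, and the odd divisors of $\beta + 16$; the Heegner hypothesis forces each of them to split in $K = \QQ(\sqrt d)$, so that $\quadsym{d}{\ell} = 1$ and, via the sum-of-two-squares theorem, their residues modulo $8$ are pinned down. Splitting $d$ according to its factorisation ($d = -2$, $-q$, $-2q$, $-qq'$, with all remaining shapes giving $\sum i_\ell \ge 4$ at once) and invoking Proposition \ref{prop:computing_local_norm_indices} settles most subcases by local norm indices alone; in case one, where $i_2 = 0$, a few more subcases are borderline. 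For the borderline subcases I would compute the local images $\Ima \delta_\ell$ and $\Ima \delta_\ell^d$ of the $\phi$- and $\phi_d$-descent maps and produce an explicit class $b \in \sqfr{\QQ}$ — a suitable divisor of $\beta + 16$, or $p$, or a product such as $pq$ — lying in $\Sel^\phi(E/\QQ) \cap \Sel^{\phi_d}(E_d/\QQ)$ but outside $G$; by Propositions \ref{prop:str_of_Phi} and \ref{prop:kernel_of_selmer} this is a non-trivial element of $\Phi$, giving $\dim_{\FF_2} \Phi \ge 1$ and therefore $\sum i_\ell + \dim_{\FF_2} \Phi \ge 4$.

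The main difficulty is again the situation in which $\beta + 16$ has a single odd prime divisor, for then there is no second Selmer class to pair against and the descent cannot yield a non-trivial $\Phi$. Here, however — and this is why the statement admits no infinite exceptional family, in contrast with the previous proposition — the relevant factorisations $p^{2z} - 16 = (p^z - 4)(p^z + 4)$, $2^{2z} - 16 = 16(2^{z-2}-1)(2^{z-2}+1)$, and $2^8 p^{2z} - 16 = 16(4p^z - 1)(4p^z + 1)$ are into coprime factors, so $\beta + 16$ carries at least two distinct odd primes unless one factor collapses to a unit or a prime power; this happens only for finitely many sporadic curves (such as `24a3' and $\beta = -9$) or forces an even exponent already excluded by the hypothesis and absorbed by the Tamagawa numbers. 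I would dispose of the finitely many sporadic curves by direct computation, or by the isogeny-invariance argument of Proposition \ref{prop:isoginv}. A secondary, purely bookkeeping, obstacle is the recomputation of $\Ima \delta_2$ and of $i_\infty$ in case one (bad reduction at $2$, and small $w$), which must be handled apart from cases two and three.
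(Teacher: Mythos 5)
Your proposal follows the paper's approach exactly --- the paper's own proof here is the single line ``Proofs are similar to Proposition 5.2'', and your plan is a correct instantiation of it: the reduced model $y^2=x^3+(w^2-8)x^2+16x$, the identification of the generator $w^2-16=-(\beta+16)$ of $G$, the good reduction at $2$ in cases two and three, and above all the structural observation that $|\beta|-16$ factors into two coprime odd factors (which is precisely why, in contrast with the preceding proposition, no infinite exceptional family survives and only the sporadic curves `24a3' and $\beta=-9$ need separate care, the latter via isogeny invariance since direct computation alone cannot cover infinitely many fields $K$) all check out. One concrete caveat: your appeal to the sum-of-two-squares theorem to ``pin down the residues modulo $8$'' of the odd primes dividing $\Delta_\mathrm{min}$ does not transfer to this proposition, because $w^2-16$ is a \emph{difference} of squares rather than a sum, so its prime divisors are not forced to be $\equiv 1 \pmod{4}$; this does not break the method, but it means the local-image and reciprocity analysis cannot be copied verbatim from the previous proof and acquires additional subcases for primes $\equiv 3 \pmod{4}$ dividing $\beta+16$.
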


\begin{proof}
Proofs are similar to Proposition 5.2.
\end{proof}

\subsection{Exceptional case}
\label{subsection:exceptionalZmod4}

This family is parametrised by the following Weierstrass equation:
\begin{equation*}
E: y^2 + p^{z} xy - p^{z} y = x^3 - x^2,
\end{equation*}
where $p$ is an odd prime congruent to $3$ modulo $4$. We consider the cases when $p^{2z} + 16$ is an odd power of a prime, in other words, $p^{2z} + 16 = q^k$ for some odd prime $q$ and odd integer $k$. When $k > 1$, such Diophantine equation has only integer solution $p^{z} = 3$, $q = 5$, and $k = 2$, c.f. \cite{CCS}, Lemma 5.5. But this case corresponds to the curve `15a3', having torsion subgroup $\ZZ/2\ZZ \oplus \ZZ/4\ZZ$. So we can exclude it from our consideration, and we may assume $k = 1$, i.e., $p^{2z} + 16$ is a prime.

Here the discriminant $\Delta = p^{2z} \lp p^{2z} + 16 \rp = p^{2z}q$, which is the minimal discriminant. The conductor of the curve $E$ is $pq$, and $E(\QQ)_\mathrm{tors} = \ZZ/4\ZZ$.

Let $G$ be the unique subgroup of $E(\QQ)_\mathrm{tors}$ of order 2, and let $E'$ be the curve $E/G$. We can find a Weierstrass equation for $E'$ thanks to Vélu's formulae (cf. \cite{MMR}). The Weierstrass equation for $E'$ is given as follows:
\begin{equation*}\label{eq:1-Epr}
y^2 + p^z xy - p^z y = x^3 - x^2 -5x - (p^{2z} + 3),
\end{equation*}
with discriminant $\Delta' = p^{4z}(p^{2z} + 16)^2$. Factoring 2-torsion polynomial, we see that $E'$ contains the full 2-torsion subgroup in $E'(\QQ)$: their $x$-coordinates are: $3$, $-1$, and $-\lp p^{2z} + 1 \rp/4$. In particular, the weak Gross--Zagier conjecture is true for $E'$ (cf. \S \ref{section:torgp_type_2_4} and \S \ref{section:torgp_type_2_2}). Now the next corollary follows from the isogeny invariance of the Gross--Zagier conjecture.

\begin{corollary}[to Proposition \ref{prop:isoginv}]
The weak Gross--Zagier conjecture is true for the elliptic curve $E$ in the family and the quadratic field $K$ satisfying Heegner hypothesis.
\end{corollary}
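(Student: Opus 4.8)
\emph{Strategy.} The plan is to deduce the weak Gross--Zagier conjecture for $E$ from the already-established case of $E'$ by invoking the isogeny invariance of Proposition \ref{prop:isoginv}. Let $\theta \colon E \to E'$ be the $2$-isogeny with kernel $G$ constructed above, and let $\hat\theta \colon E' \to E$ be its dual. Since $E'$ contains the full $2$-torsion in $E'(\QQ)$ (its $x$-coordinates being $3$, $-1$, and $-(p^{2z}+1)/4$ as computed), the weak Gross--Zagier conjecture is already known for $E'$ by the results of \S\ref{section:torgp_type_2_4} and \S\ref{section:torgp_type_2_2}.

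With this in hand, I would apply Proposition \ref{prop:isoginv}(c) to the \emph{dual} isogeny $\hat\theta \colon E' \to E$, taking $E'$ as the source and $E$ as the target, after fixing modular parametrisations compatible with $\hat\theta$ so that the relation \eqref{eq:isoginv} is available. Because $\torgp \simeq \ZZ/4\ZZ$ is a $2$-group, the only prime at which anything must be verified is $p=2$: for every odd prime the required inequality $\ord_p \#\torgp \le \ord_p \lp u_K \cdot C \cdot M \cdot \lp \# \Sh(E/K) \rp^{1/2} \rp$ reads $0 \le \ord_p(\cdots)$ and is automatic. Hypothesis (ii) of the proposition at $p=2$ is precisely the $2$-part of the weak Gross--Zagier conjecture for $E'$, which we have just recorded. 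Granting hypothesis (i), the proposition then yields $\ord_2 \#\torgp \le \ord_2 \lp u_K \cdot C \cdot M \cdot \lp \# \Sh(E/K) \rp^{1/2} \rp$, that is $4 \mid u_K \cdot C \cdot M \cdot \lp \# \Sh(E/K) \rp^{1/2}$, which is the assertion of Theorem \ref{th:torgp_type_4} for the exceptional family.

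\emph{Main obstacle.} The one step requiring genuine work is hypothesis (i): that the $2$-primary torsion of $E'$ does not grow over $K$, i.e. $\ord_2 \# E'(K)_\mathrm{tors} = \ord_2 \# E'(\QQ)_\mathrm{tors}$. Since the full $2$-torsion of $E'$ is already rational, any growth would produce a $K$-rational point whose double is a rational $2$-power torsion point, and at the first step this means a $K$-rational point of order $4$ halving one of the three rational $2$-torsion points $T$. I would settle this with the classical halving criterion: writing $E'$ in the form $y^2 = (x-e_1)(x-e_2)(x-e_3)$, the point $(e_i,0)$ lies in $2E'(K)$ only if both differences $e_i - e_j$ and $e_i - e_k$ are squares in $K$. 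Over an imaginary quadratic field a \emph{positive} rational is a square precisely when it is already a square in $\QQ$, so the positive root-differences contribute no new divisibility, while each of the finitely many negative differences can be a square in at most one specific quadratic field. Computing these differences from the roots $3$, $-1$, $-(p^{2z}+1)/4$ and checking that the resulting short list of candidate fields either fails to be an imaginary quadratic field satisfying the Heegner hypothesis or is excluded directly then establishes (i). I expect this verification, rather than the formal application of Proposition \ref{prop:isoginv}(c), to be the crux; once it is in place the chain of implications above completes the proof of the corollary.
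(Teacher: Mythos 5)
Your proposal has the same skeleton as the paper's proof: the weak conjecture is known for $E'$ because $E'$ has full rational $2$-torsion, it is transferred to $E$ by Proposition \ref{prop:isoginv}(c) applied to the dual isogeny with compatible modular parametrisations, and the entire content is then hypothesis (i), that the $2$-primary torsion of $E'$ does not grow over $K$. Where you genuinely diverge is in how (i) is checked. The paper factors the $4$-division polynomial of $E'$, reads off the discriminants $p^{2z}q$, $4q$, $-4p^{2z}$ of its quadratic factors, concludes that a new point of order $4$ could only live over $\QQ(\sqrt{q})$ (real, irrelevant) or $\QQ(\sqrt{-1})$, and disposes of the latter \emph{without} excluding torsion growth, simply by noting $u_K=2$ there so that $4 \mid u_K \cdot C \cdot \lp \# \Sh(E/K) \rp^{1/2}$ regardless. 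Your halving criterion is sharper, since it demands that \emph{both} root differences at $e_i$ be squares in $K$. You leave the decisive arithmetic unexecuted, but it does go through: with $e_i \in \lbr 3, -1, -(p^{2z}+1)/4 \rbr$ the three pairs of differences are $\lp 4, (p^{2z}+13)/4 \rp$, $\lp -4, (p^{2z}-3)/4 \rp$ and $\lp -(p^{2z}+13)/4, -(p^{2z}-3)/4 \rp$; the first two each contain a positive rational that is never a square ($p^{2z}+13=s^2$ forces $p^z=6$, and $p^{2z}-3=s^2$ forces $p^z=2$), and the third would force $(p^{2z}+13)(p^{2z}-3)=(p^{2z}+5)^2-64$ to be a square, which has no solution with $p^{2z}\ge 9$. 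So your route, once written out, rules out growth over \emph{every} imaginary quadratic field, including $\QQ(\sqrt{-1})$, and never needs the $u_K$ escape hatch; the paper's version is cruder (a root of $f_i$ in $K$ is only necessary, not sufficient, for a $K$-point of order $4$) but requires less computation. Both are valid proofs of the corollary.
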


\begin{proof}
Take $\theta: E' \to E$ be the isogeny dual to $E \to E/G$ and take modular parametrisations respecting $\theta$, i.e., we first choose a modular parametrisation $\pi'$ of $E'$ and let $\pi = \theta \circ \pi'$ be the modular parametrisation for $E$. By Proposition \ref{prop:isoginv} (c), and by the remark just below the proposition, for a fixed $E$ in the family, the weak Gross--Zagier conjecture is true except possibly for at most 4 quadratic fields. Since we only concern 2-divisibility, let us try to figure out the quadratic fields satisfying $2 = \ord_2 E'(\QQ)_\mathrm{tors} < \ord_2 E'(K)_\mathrm{tors}$. If this inequality is satisfied, then $E'(K)_\mathrm{tors}$ must contain a point of exact order 4.

The $4$-torsion polynomial for $E'$ (i.e., the polynomial whose roots are the $x$-coordinates of the points in $E'[4](\overline\QQ)$) is given as follows:
\begin{equation*}
f_1(x) f_2(x) f_3(x) g(x)
\end{equation*}
where
\begin{itemize}
\item
$f_1(x) = 2x^2 + (p^{2z} + 4) x + (-p^{2z} + 2)$,

\item
$f_2(x) = x^2 - 6x - (p^{2z} + 7)$,

\item
$f_3(x) = x^2 + 2x + (p^{2z} + 1)$,

\item
$g(x) = \lp 4x + p^{2z} + 4 \rp \lp x + 1 \rp \lp x - 3 \rp$.
\end{itemize}
Evidently, the roots of $g(x)$ correspond to points in $E'[2]$. Discriminants $d_i$ of $f_i(x)$ are as follows:
\begin{itemize}
\item
$d_1 = p^{2z} \lp p^{2z} + 16 \rp = p^{2z} q$,

\item
$d_2 = 4 \lp p^{2z} + 16 \rp = 4q$,

\item
$d_3 = -4p^{2z}$.
\end{itemize}
Thus if $K = \QQ(\sqrt{d})$ is a quadratic field, the polynomials $f_i(x)$ do not have roots in $K$ unless $K = \QQ(\sqrt{-1})$ or $K = \QQ(\sqrt{q})$. Note that $\QQ(\sqrt{q})$ is a real quadratic field, which is not in our concern. If $K = \QQ(\sqrt{-1})$, then we have $u_K = 2$. As we already knew $2 \mid C_E \cdot \lp \# \Sh(E/K) \rp^{1/2}$, we have $4 \mid u_K \cdot C_E \cdot \lp \# \Sh(E/K) \rp^{1/2}$, and the weak Gross--Zagier conjecture is also true for this case.
\end{proof}


\section{$\torgp \simeq \ZZ/2\ZZ$}
\label{section:torgp_type_2}

\begin{theorem}\label{th:torgp_type_2}
If $E$ is an elliptic curve defined over $\QQ$, having rational torsion subgroup $\torgp$ isomorphic to $\ZZ/2\ZZ$, then the order $2 = \# \torgp$ divides $u_K \cdot C \cdot M \cdot \lp \# \Sh(E/K) \rp^{1/2}$.
\end{theorem}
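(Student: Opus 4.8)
The plan is to follow the same two-stage strategy used for $\torgp \simeq \ZZ/4\ZZ$ in Section \ref{section:torgp_type_4}, now with the much weaker goal of producing a single factor of $2$. Since $\torgp \simeq \ZZ/2\ZZ$, I write $E$ in the form \eqref{eq:Weierstrass_having_2_torsion}, $y^2 = x^3 + Ax^2 + Bx$, with rational $2$-torsion point $P = (0,0)$ and discriminant $\Delta = 2^4 B^2(A^2 - 4B)$, and run Tate's algorithm at each bad prime to determine when $2 \mid C$. The presence of the rational $2$-point pushes the bad reduction into types (multiplicative $\mathrm{I}_n$ and additive $\mathrm{I}_n^\ast$, $\mathrm{III}$, $\mathrm{III}^\ast$) whose Tamagawa numbers are frequently even, so that requiring $C$ to be odd forces $A$ and $B$ into a short explicit list. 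Exactly as in the Diophantine argument of Proposition \ref{prop:relations_of_S_and_T}, this isolates the curves with $C$ odd as a small number of one-parameter families together with finitely many sporadic curves, the latter checked directly in Sage.

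For every remaining curve with $C$ odd I invoke Kramer's formula. By Theorem \ref{th:Kr} and the discussion in Section \ref{subsection:Kramer}, since $\rk E(K) = 1$ and $E(\QQ)[2] \simeq \ZZ/2\ZZ$, it suffices to prove
\[
\sum_\ell i_\ell + \dim_{\FF_2}\Phi \ge 4,
\]
(the term $\dim_{\FF_2} NS' \ge 0$ only helps), which yields $\dim_{\FF_2}\Sh(E/K)[2] \ge 1$ and hence $2 \mid \lp \# \Sh(E/K) \rp^{1/2}$ by Kolyvagin's theorem. Here $i_\infty = 1$ when $\Delta_\mathrm{min} > 0$, and the local norm indices at the ramified odd primes and at $2$ are read off from Proposition \ref{prop:computing_local_norm_indices}; the Heegner hypothesis constrains the residues modulo $8$ of the primes dividing $\Delta_\mathrm{min}$, just as in the $\ZZ/4\ZZ$ computation. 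To control $\dim_{\FF_2}\Phi$ I use Propositions \ref{prop:str_of_Phi} and \ref{prop:kernel_of_selmer}: I perform a descent via the $2$-isogeny $\phi: E \to E/\langle P\rangle$ and its twist $\phi_d$, compute the local images $\Ima\delta_\ell$ and $\Ima\delta_\ell^d$, and exhibit a class $b \in \sqfr{\QQ}$ lying in $\Sel^\phi(E/\QQ) \cap \Sel^{\phi_d}(E_d/\QQ)$ but not in the subgroup $G$ generated by $A^2 - 4B$, so that $b$ is a nontrivial element of $\Phi$. Splitting into the cases $d = -2$, $-q$, $-2q$, $-qq'$ according to the ramification in $K$, as before, gives $\sum_\ell i_\ell + \dim_{\FF_2}\Phi \ge 4$ in all but a few borderline configurations.

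The borderline configurations are those where both the Tamagawa and the Selmer contributions fall just short — typically when $\Delta_\mathrm{min}$ carries a single odd prime divisor of the wrong residue class — and these I handle exactly as the exceptional family of Section \ref{subsection:exceptionalZmod4}. I pass through a rational $2$-isogeny $\theta$ to a curve $E'$ containing the full $2$-torsion subgroup (or otherwise with $4 \mid C'$), for which the weak Gross--Zagier conjecture is already established in Sections \ref{section:torgp_type_2_4} and \ref{section:torgp_type_2_2}, and transport the conclusion back to $E$ via Proposition \ref{prop:isoginv}(c). I expect the main obstacle to be precisely this last step: to apply the isogeny-invariance criterion one must verify the torsion hypothesis $\ord_2 \# E(K)_\mathrm{tors} = \ord_2 \# E(\QQ)_\mathrm{tors}$, i.e. rule out (or separately absorb) the finitely many quadratic fields over which a new $4$-torsion point appears. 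As in Section \ref{subsection:exceptionalZmod4} this is done by factoring the relevant division polynomials of $E'$ and comparing their discriminants against $\disc(K)$, the only genuinely delicate field being $K = \QQ(\sqrt{-1})$, which is absorbed by the extra factor $u_K = 2$.
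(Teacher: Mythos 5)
Your proposal follows the paper's proof of Theorem \ref{th:torgp_type_2} essentially step for step: Tate's algorithm reduces to the coprime case with $B \in \lbr 1,-1,16,-16 \rbr$ and $C$ odd, Kramer's formula combined with the $\phi$- and $\phi_d$-descents gives $\sum i_\ell + \dim_{\FF_2}\Phi \ge 4$ outside the families where $A^2 - 4B$ is a prime power, and those families are absorbed by the $2$-isogeny and isogeny-invariance argument of \S\ref{subsection:exceptionalZmod2}, including the check on torsion growth over $K$ and the $u_K = 2$ escape for $K = \QQ(\sqrt{-1})$. The only divergence is one of bookkeeping: the paper dispatches a handful of sporadic curves and the Neumann--Setzer family ($B = -16$, $A^2 + 64$ prime) by citing $2 \mid M$ (the latter from \cite{SW04}) rather than by the isogeny route, but the strategy is the same.
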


For such elliptic curves, we can find a Weierstrass model following Kubert \cite{Ku}:
\begin{equation}\label{eq:Weierstrass_basic_type_2}
y^2 = x^3 + Ax^2 + Bx,
\end{equation}
where $A, B \in \ZZ$. Note that $A$ and $B$ are not necessarily relatively prime. This elliptic curve has discriminant $\Delta = 16B^2 (A^2 - 4B)$ and $c_4 = 16(A^2 - 3B)$. Let $N$ be the conductor of $E$, and $\Delta_\mathrm{min}$ be the minimal discriminant of $E$.

\subsection{Tamagawa numbers}

The purpose of this subsection is to compute Tamagawa numbers of $E$ at various primes, in order to reduce the cases. Remaining cases will be dealt with in the subsequent subsections. More precisely, we show the following.

\begin{proposition}\label{prop:final_reduction_about_A_and_B}
Let $E$ be an elliptic curve defined by the equation \eqref{eq:Weierstrass_basic_type_2}.
\begin{enumerate}
    \item If $\gcd(A,B) \neq 1$, i.e., if there is a common prime dividing both $A$ and $B$, then $2 \mid C$.
    \item If $B \not\in \lbr 1, -1, 16, -16 \rbr$, then $2 \mid C$.
    \item If $p$ is an odd prime such that $\ord_p \lp A^2 - 4B \rp$ is even, then $2 \mid C_p$.
\end{enumerate}
\end{proposition}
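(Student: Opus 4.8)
The plan is to prove all three parts by applying Tate's algorithm at a carefully chosen prime and reading off the parity of the local Tamagawa number, exploiting two recurring mechanisms tied to the rational $2$-torsion point $P=(0,0)$ and to the factorisation $\Delta = 16B^2(A^2-4B)$. The first mechanism is that multiplicative reduction of type $\mathrm{I}_n$ has even $C_p$ whenever $n$ is even, since $C_p = n$ in the split case and $C_p = \gcd(2,n) = 2$ in the non-split case. The second is that, at an \emph{odd} prime $p$ of additive reduction, a rational point of order $2$ lying outside the identity component of the Néron model forces $2 \mid C_p$: the identity component is $\mathbb{G}_a$, which over $\overline{\FF}_p$ has no torsion prime to $p$, so the image of $P$ in $\Phi_p(\FF_p)$ is a nontrivial element of order $2$.

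For part (c), I would first observe that if $p \mid B$ then $p \mid A^2$, hence $p \mid A$, so this situation is subsumed by part (a); one may therefore assume $p \nmid B$ (and $\ord_p(A^2-4B)$ even and positive, the content-ful reading). Then $A^2 \equiv 4B \pmod p$ gives $c_4 = 16(A^2 - 3B) \equiv 16B \not\equiv 0 \pmod p$, so $E$ has multiplicative reduction of type $\mathrm{I}_n$ with $n = \ord_p \Delta = \ord_p(A^2-4B)$ even, and the first mechanism yields $2 \mid C_p$. Part (a) for an odd common prime $p$ is handled by the second mechanism: there $p\mid A$ and $p\mid B$ force the reduction to be $y^2 = x^3$ modulo $p$, whose unique singular point is $(0,0)$; hence $P=(0,0)$ reduces to a non-identity component and $2\mid C_p$.

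For part (b), take $\gcd(A,B)=1$ (otherwise invoke part (a)). If $B$ has an odd prime divisor $p$, then $p\nmid A$, so $\ord_p(A^2-4B)=0$ while $c_4 \equiv 16A^2 \not\equiv 0 \pmod p$; thus $E$ has multiplicative reduction $\mathrm{I}_n$ with $n = \ord_p\Delta = 2\ord_p B$ even, and again $2\mid C_p$. The remaining case is $B = \pm 2^k$ with $A$ odd, and here everything comes down to the prime $2$.

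The hard part will be precisely this $2$-adic analysis, which must reproduce $\{1,-1,16,-16\} = \{\pm 2^0, \pm 2^4\}$ as exactly the values with odd $C_2$. For $B = \pm 2^k$ with $A$ odd one computes $\ord_2\Delta = 4+2k$, $\ord_2 c_4 = 4$ (for $k\ge 1$) and $\ord_2 c_6 = 6$ (for $k \ge 4$); when $k \ge 5$ the model becomes non-minimal at $2$ and minimalising yields multiplicative reduction $\mathrm{I}_{2k-8}$ with even $C_2$, while $k=4$ minimalises all the way to good reduction ($C_2=1$), and the small cases $k=1,2,3$ together with $k=0$ (where $\ord_2\Delta = 4$) require running Tate's algorithm directly on the minimal model to pin down the reduction type and the parity of $C_2$. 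Carrying out these computations carefully -- tracking minimality through the congruences on $c_4$, $c_6$, $\Delta$ and distinguishing the additive types with odd Tamagawa number (such as those appearing for $B = \pm 1$) from the even cases -- is the main obstacle, since residue characteristic $2$ places us outside the uniform $(\ord c_4, \ord c_6, \ord\Delta)$ classification and forces an honest case-by-case application of the algorithm.
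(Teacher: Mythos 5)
Your overall route is the paper's: reduce everything to a prime-by-prime computation of $C_p$ via Tate's algorithm, using that type $\mathrm{I}_n$ with $n$ even has even local Tamagawa number, and that a rational $2$-torsion point forces $2\mid C_p$ at an odd additive prime. Your justification of the latter (the point cannot lie in $E_0(\QQ_p)$ because the nonsingular locus of an additive reduction is the additive group, which has no prime-to-$p$ torsion, while the kernel of reduction is pro-$p$) is a cleaner way to get part (a) at odd primes than the paper's explicit case table, and your parts (c) and (b)-away-from-$2$ agree with the paper. There are, however, genuine gaps. The most serious: part (a) when the common prime is $2$ is never treated. Your component-group mechanism is explicitly (and necessarily) restricted to odd $p$ --- at $p=2$ the group $\FF_2^{+}$ consists entirely of $2$-torsion and the formal group is pro-$2$, so a rational point of order $2$ may well lie in $E_0(\QQ_2)$ --- and your part (b) analysis assumes $\gcd(A,B)=1$ with $A$ odd. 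Hence a curve with $2\mid A$ and $2\mid B$ (e.g.\ $B=\pm 16$ with $A$ even) falls through every case you consider, yet this is exactly what the section needs in order to reduce to coprime $A,B$ afterwards. The paper covers it by running Tate's algorithm at $p=2$ as well, finding only types $\mathrm{III}$, $\mathrm{I}_k^{\ast}$, $\mathrm{III}^{\ast}$, all with even $C_2$.

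A second gap is that your argument for part (a) at odd $p$ silently assumes the displayed model is minimal at $p$, and without a normalisation this fails --- indeed the literal statement fails. If $\ord_p A\ge 2$ and $\ord_p B\ge 4$ the model is non-minimal at $p$: for instance $(A,B)=(25,625)$ defines the same curve as $(A,B)=(1,1)$, namely $y^2=x^3+x^2+x$ of discriminant $-48$, which has good reduction at $5$ and type $\mathrm{II}$ at $2$, so $C=C_2C_3=1$ is odd although $\gcd(A,B)=25$. The paper's proof opens by rescaling via $[p,0,0,0]$ until no prime has $\ord_p A\ge 2$ and $\ord_p B\ge 4$; after that normalisation one has $\ord_p c_4<4$, the model is minimal at every odd $p$, and your mechanism is valid. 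You must make this reduction explicit. Finally, the residue-characteristic-$2$ computations you defer for $B=\pm 2^k$, $k=1,2,3$ (together with the non-minimality claim for $k\ge 5$, which at $p=2$ does not follow from the valuations of $c_4$, $c_6$, $\Delta$ alone but needs Kraus's criterion or a direct run of Tate's algorithm) are precisely the content of the paper's second table; as written, part (b) is established only away from $2$. Your incidental claim that $k=4$ always minimalises to good reduction is also imprecise --- the paper shows this only for $A\equiv 1\pmod 4$, the case $A\equiv 3\pmod 4$ giving multiplicative reduction with even $C_2$ --- though this does not affect the truth of (b).
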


\begin{proof}
(a) Let $p$ be a prime dividing both $A$ and $B$. First of all, if both $\ord_p A \ge 2$ and $\ord_p B \ge 4$ are true, then we can make a change of variables via $[p,0,0,0]$ to get another equation of the same form as equation \eqref{eq:Weierstrass_basic_type_2} with $(A,B)$ replaced by $(A/p^2, B/p^4)$. Consequently, we assume either $\ord_p A <2$ or $\ord_p B <4$. Using Tate's algorithm, we find reduction types for $E$ modulo $p$ as summarised in the following.

\begin{center}
\begin{tabular}{|c|c|c|c|}
\hline
$\ord_p B$         & $\ord_p A$    & Reduction Types of $E$ modulo $p$ & Tamagawa Number $C_p$ \\ \hline\hline
1                  &               & $\mathrm{III}$                    & 2                     \\ \hline
2                  &               & $\mathrm{I}_k^\ast$ for some $k$  & even                  \\ \hline
\multirow{2}{*}{3} & 1             & $\mathrm{I}_k^\ast$ for some $k$  & even                  \\ \cline{2-4}
                   & $\ge 2$       & $\mathrm{III}^\ast$               & 2                     \\ \hline
$\ge 4$            & 1 (bindingly) & $\mathrm{I}_k^\ast$ for some $k$  & even                  \\ \hline
\end{tabular}
\end{center}

(b) Let $p$ be a prime such that $p \mid B$ but $p \nmid A$. Using Tate's algorithm, we have the following results.
\begin{center}
\begin{tabular}{|c|c|c|c|c|}
\hline
                         & $\ord_p B$ & $A \mod{4}$          & Reduction Types of $E$ modulo $p$                      & Tamagawa Number $C_p$ \\ \hline \hline
$p \neq 2$               &            &                      & $\mathrm{I}_n$ with $n = \ord_p \Delta = 2 \ord_p B$   & even                  \\ \hline
\multirow{6}{*}{$p = 2$} & 1          &                      & $\mathrm{III}$                                         & 2                     \\ \cline{2-5}
                         & 2          &                      & $\mathrm{I}_k$ for some $k$                            & even                  \\ \cline{2-5}
                         & $\ge 3$    & $-1$                 & $\mathrm{I}_k$ for some $k$                            & even                  \\ \cline{2-5}
                         & 3          & \multirow{3}{*}{$1$} & $\mathrm{III}^\ast$                                    & even                  \\ \cline{2-2} \cline{4-5}
                         & 4          &                      & $\mathrm{I}_0$ (good)                                  & 1                     \\ \cline{2-2} \cline{4-5}
                         & $\ge 5$    &                      & $\mathrm{I}_n$ with $n= \ord_p \Delta = 2\ord_p B - 8$ & even                  \\ \hline
\end{tabular}
\end{center}
In particular, if $B \not\in \lbr 1, -1, 16, -16 \rbr$, we always have $2 \mid C$.

(c) Let $p$ be an odd prime such that $\ord_p \lp A^2 - 4B \rp$ is an even positive integer. By (a), we assume $p \nmid AB$. Tate's algorithm tells us that in this case, $E$ has reduction of type $\mathrm{I}_{\ord_p \lp A^2 -4B \rp}$, and we have $2 \mid C_p$.
\end{proof}

\begin{proposition}\label{prop:final_reduction_about_A_and_B_2}
Let $E$ be an elliptic curve defined by the equation \eqref{eq:Weierstrass_basic_type_2}.
\begin{enumerate}
    \item Suppose that $B = 1$. If $A \equiv 0$ or $1 \pmod{4}$, then $C_2= 1$. If $A \equiv 3 \pmod{4}$, then $C_2 = 2$. When $A \equiv 2 \pmod{4}$, the situation is more complicated, and we summarise the value $C_2$ modulo $2$ according to $A \pmod{128}$ as follows.
    \begin{center}
\begin{tabular}{|c||c|c|c|c|c|c|c|c|c|c|c|c|c|c|c|c|}
\hline
$A \mod{128}$ & 2  & 6  & 10 & 14 & 18 & 22 & 26 & 30 & 34 & 38  & 42  & 46  & 50  & 54  & 58  & 62  \\ \hline
$C_2 \mod{2}$ & 0  & 0  & 1  & 0  & 0  & 0  & 1  & 1  & 0  & 0   & 1   & 0   & 0   & 0   & 1   & 1   \\ \hline\hline
$A \mod{128}$ & 66 & 70 & 74 & 78 & 82 & 86 & 90 & 94 & 98 & 102 & 106 & 110 & 114 & 118 & 122 & 126 \\ \hline
$C_2 \mod{2}$ & 0  & 0  & 1  & 0  & 0  & 0  & 1  & 1  & 0  & 0   & 1   & 0   & 0   & 0   & 1   &     \\ \hline
\end{tabular}
    \end{center}
    If $A \equiv 126 \pmod{128}$, then the parity of $C_2$ is the same as the parity of $\ord_2 \lp A+2 \rp$. Moreover, $E$ has good reduction modulo 2 if and only if $A \equiv 62 \pmod{128}$. In particular, $C_2$ is odd if and only if $A \equiv 0 \pmod{4}$; $A \equiv 1 \pmod{4}$; $A \equiv 10 \pmod{16}$; $A \equiv 62 \pmod{128}$; or $A \equiv 126 \pmod{128}$ and $\ord_2 \lp A + 2 \rp$ is odd.

    \item Suppose that $B = -1$. Then $C_2$ is even if and only if $A \equiv 0 \pmod{4}$.

\end{enumerate}
\end{proposition}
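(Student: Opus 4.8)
The plan is to run Tate's algorithm (\cite{advAEC}, \S IV.9) at the prime $2$ on the model \eqref{eq:Weierstrass_basic_type_2}, organised according to the residue of $A$ modulo a power of $2$. The relevant invariants are $c_4 = 16(A^2 - 3B)$, $c_6 = -32A(2A^2 - 9B)$, and $\Delta = 16B^2(A^2-4B)$, and the entire difficulty is governed by the size of $\ord_2 \Delta$. Indeed, whenever $\ord_2 \Delta < 12$ the model \eqref{eq:Weierstrass_basic_type_2} is automatically minimal at $2$, and then the reduction type and the parity of $C_2$ are pinned down after a bounded number of steps of the algorithm, so that the answer depends only on $A$ modulo a small power of $2$.

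I would first dispose of part (b). For $B=-1$ one computes $\ord_2(A^2+4) = 0, 2, 3$ according as $A$ is odd, $A \equiv 0 \pmod 4$, or $A \equiv 2 \pmod 4$; hence $\ord_2 \Delta \le 7 < 12$ in every case and \eqref{eq:Weierstrass_basic_type_2} is always minimal at $2$. A direct three-way run of Tate's algorithm (on $A \bmod 4$) then shows that $C_2$ is even exactly for the class $A \equiv 0 \pmod 4$, proving (b) with no minimality subtleties. The easy half of (a) is handled the same way: for $B=1$ and $A$ odd one has $\ord_2\Delta = 4$, and the algorithm separates $A \equiv 1 \pmod 4$ (where $C_2 = 1$) from $A \equiv 3 \pmod 4$ (where $C_2 = 2$); for $A \equiv 0 \pmod 4$ one has $\ord_2 \Delta = 6$, the model is minimal, and the algorithm returns $C_2 = 1$.

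The genuine obstacle is the class $B=1$, $A \equiv 2 \pmod 4$. Writing $A = 2A'$ with $A'$ odd, one finds $\ord_2(A^2-4) = 2 + \ord_2(A'-1) + \ord_2(A'+1) \ge 5$, so $\ord_2 \Delta \ge 9$ and may exceed $12$. In this range the \emph{numerical} non-minimality thresholds $\ord_2 c_4 \ge 4$, $\ord_2 c_6 \ge 6$, $\ord_2 \Delta \ge 12$ can all be met, yet at $p=2$ these conditions are not sufficient for non-minimality: whether \eqref{eq:Weierstrass_basic_type_2} genuinely descends under a transformation with $u=2$, and whether the descended model is then smooth or singular modulo $2$, is controlled by finer $2$-adic congruences on $A$. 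Pushing these congruences through the (at most one) minimalisation step and the ensuing run of Tate's algorithm is exactly what produces the table modulo $128$; in particular it explains why $\ord_2 \Delta = 12$, which occurs precisely for $A \equiv 62$ and $A \equiv 66 \pmod{128}$, yields good reduction only for $A \equiv 62 \pmod{128}$ (for $A \equiv 66$ the descent to a smooth model fails and the reduction stays bad). The class $A \equiv 126 \pmod{128}$ is special because there $\ord_2(A+2)$ is unbounded: after minimalisation the curve acquires split multiplicative reduction of type $\mathrm{I}_n$ with $n = \ord_2(A+2) - 6$, so the parity of $C_2 = n$ equals that of $\ord_2(A+2)$, rather than being fixed by a residue class.

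I expect the main obstacle to be precisely the treatment of minimality at $2$ in this last branch: one must either invoke Kraus-type minimality criteria or carry Tate's algorithm carefully across the borderline valuations, and then assemble the resulting finite collection of subcases into the stated table and its ``in particular'' summary. Since every subcase reduces to a finite congruence computation, I would confirm the full bookkeeping modulo $128$ with Sage, as is done elsewhere in the paper.
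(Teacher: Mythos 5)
Your proposal is correct and follows exactly the paper's approach: the paper's entire proof of this proposition is the single phrase ``Tate's algorithm,'' and your outline simply makes explicit the case division by residues of $A$, the minimality analysis at $2$ (via Kraus-type criteria, which correctly isolates $A\equiv 62$ versus $A\equiv 66 \pmod{128}$ and the $\mathrm{I}_n$ type with $n=\ord_2(A+2)-6$ for $A\equiv 126 \pmod{128}$), and the concluding Sage verification. Your spot-checks of the valuations $\ord_2\Delta$ and the resulting branches are all consistent with the stated table, so nothing further is needed.
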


\begin{proof}
Tate's algorithm.
\end{proof}

\begin{remark}
By Propositions \ref{prop:final_reduction_about_A_and_B} and \ref{prop:final_reduction_about_A_and_B_2}, we assume the following throughout this section; $E$ is an elliptic curve defined by the equation \eqref{eq:Weierstrass_basic_type_2} for relatively prime $A \in \ZZ$ and $B \in \lbr 1, -1, 16, -16 \rbr$ with discriminant $\Delta = 2^4 B^2 (A^2 - 4B)$, such that all odd prime divisors of $A^2 - 4B$ has odd exponent. Moreover,
\begin{itemize}
    \item when $B = 1$, we assume $A \equiv 0 \pmod{4}$, $A \equiv 1 \pmod{4}$, $A \equiv 10 \pmod{16}$, $A \equiv 62 \pmod{128}$, or $A \equiv 126 \pmod{128}$ and $\ord_2 \lp A + 2 \rp$ is odd;
    \item when $B = -1$, we assume $A \not\equiv 0 \pmod{4}$;
    \item when $B = \pm 16$, we assume $A \equiv 1 \pmod{4}$.
\end{itemize}
\end{remark}

\begin{remark}
We furthermore assume $\Delta >0$, by removing finitely many exceptional cases by explicit computation. As $\Delta = 16 B^2 (A^2 - 4B)$, we need to check the cases $(A,B) = (0,1)$, $(\pm 1, 1)$ and $(\pm n, 16)$ for $n = 0, 1, \cdots, 7$. This is easy with Sage Mathematics Software \cite{sagemath}.
\end{remark}

\subsection{$\lp \# \Sh(E/K) \rp^{1/2}$}

In this subsection, we shall see $2 \mid \lp \# \Sh(E/K) \rp^{1/2}$, for various remaining cases left from considerations about Tamagawa numbers. Our main job is to show $\sum i_\ell + \dim \Phi \ge 4$ (notations from subsection \ref{subsection:Kramer}). Then,
\begin{equation*}
\boxed{\sum i_\ell + \dim \Phi \ge 4} \Longrightarrow \boxed{ \dim_{\FF_2} \Sh(E/K)[2] \ge 1} \Longrightarrow \boxed{2 \mid \lp \# \Sh(E/K) \rp^{1/2}}.
\end{equation*}
The first implication follows from Kramer's theorem (see subsection \ref{subsection:Kramer}), and the last implication is due to Kolyvagin's theorem \cite{Kol}.

\begin{proposition}
Suppose that $B = -1$. By the considerations of the subsection above, we assume
\begin{itemize}
	\item $A \equiv 0 \pmod{4}$, and
	\item if $\ell$ is an odd prime dividing $A^2 - 4B = A^2 + 4$, then it has odd exponent.
\end{itemize}
Then we have $\sum i_\ell + \dim_{\FF_2} \Phi \ge 4$, i.e., $2 \mid \lp \# \Sh(E/K) \rp^{1/2}$, except for
\begin{itemize}
    \item `128b2' and `128d2', for which $2 \mid M$;
    \item a family of curves for which $A^2 + 4$ is a power of a prime number.
\end{itemize}
\end{proposition}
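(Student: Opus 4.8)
The plan is to adapt the $\ZZ/4\ZZ$ argument of Section \ref{section:torgp_type_4}: first bound $\sum_\ell i_\ell$ so as to cut the admissible imaginary quadratic fields $K = \QQ(\sqrt d)$ down to a short list, and then in the residual cases produce a nontrivial class in the everywhere-local norm group. By Proposition \ref{prop:str_of_Phi} we have $\Phi = \Sel^2(E/\QQ) \cap \Sel^2(E_d/\QQ)$, so such a class will be found by running a $\phi$-descent on $E \colon y^2 = x^3 + Ax^2 - x$ and the matching $\phi_d$-descent on its quadratic twist $E_d$. Before anything else I would record three structural facts. By the reductions of the previous subsection $A \not\equiv 0 \pmod 4$, whence $\ord_2 \Delta_\mathrm{min} > 0$ and $2$ is a prime of bad reduction; the Heegner hypothesis then forces $2$ to split, so $d \equiv 1 \pmod 8$ (in particular $d$ is odd) and $i_2 = 0$ by Proposition \ref{prop:computing_local_norm_indices}. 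Since $\Delta = 16(A^2+4) > 0$, we have $i_\infty = 1$. Finally, as $A^2 + 4 = A^2 + 2^2$ is a sum of two squares, no prime $\equiv 3 \pmod 4$ divides it to odd order; with the standing hypothesis that odd prime divisors occur to odd order, this forces every odd prime divisor $\ell$ of $A^2 + 4$ to satisfy $\ell \equiv 1 \pmod 4$. This congruence is precisely what will render the local conditions at the bad odd primes automatic.

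Next I would compute, following \cite{Go}, the local images $\Ima \delta_\ell \subset \sqfr{\QQ_\ell}$ of the $\phi$-descent for $\phi \colon E \to E' = E/E(\QQ)[2]$, exactly as in Section \ref{section:torgp_type_4}: $\{1\}$ at $\infty$ (so representatives must be positive), the units at odd $\ell \nmid \Delta$, all of $\sqfr{\QQ_\ell}$ at odd $\ell \mid \Delta$, and a two-element subgroup at $2$. This exhibits $\Sel^\phi(E/\QQ)$ as a subgroup of the classes generated by the odd prime divisors of $A^2 + 4$, whose image in $\Sel^2(E/\QQ)$ has kernel $G \cap \Sel^\phi(E/\QQ)$ with $G = \langle A^2 + 4 \rangle$, by Proposition \ref{prop:kernel_of_selmer}. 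Because $d$ is odd, the case analysis is by the number of odd prime divisors of $d$ alone. Using $i_\infty = 1$, $i_2 = 0$, and $i_p = \dim_{\FF_2} E[2](\FF_p) \in \{1, 2\}$ for a good ramified odd prime $p$ --- with $i_p = 2$ precisely when $\quadsym{A^2+4}{p} = 1$ --- one obtains $\sum_\ell i_\ell \ge 4$ whenever $d$ has at least three odd prime factors, or two odd prime factors one of which satisfies $i_p = 2$. This leaves the residual cases $d = -p$ and $d = -pp'$ with $i_p = i_{p'} = 1$.

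In each residual case I would build a nontrivial class in $\Phi$ from a single odd prime $\ell \mid A^2 + 4$ (so $\ell \equiv 1 \pmod 4$). The class of $\ell$ lies in $\Sel^\phi(E/\QQ)$: it is positive, a unit away from $\Delta$, admissible at the remaining bad odd primes, lies in the two-element group at $2$ because $\ell \equiv 1 \pmod 4$, and survives the local condition at $\ell$ itself. In the twisted descent the Heegner condition $\quadsym{d}{\ell} = 1$ together with $\ell \equiv 1 \pmod 4$ makes all local conditions automatic, so $\ell \in \Sel^{\phi_d}(E_d/\QQ)$ as well. Hence $\ell \in \Phi$, and by Proposition \ref{prop:kernel_of_selmer} it is nontrivial precisely when $\ell \notin G$, i.e. exactly when $A^2 + 4$ possesses a second distinct odd prime factor. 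This gives $\sum_\ell i_\ell + \dim_{\FF_2} \Phi \ge 4$, whence $2 \mid \lp \# \Sh(E/K) \rp^{1/2}$ by Kramer's formula (subsection \ref{subsection:Kramer}).

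The crux, and the origin of the two exceptions, is this final nontriviality. When $A^2 + 4$ is an odd prime power $\ell^k$ (forcing $A$ odd, $\ell \equiv 1 \pmod 4$, $k$ odd), the only candidate class is $\ell$ itself, which is $G$-equivalent to $A^2 + 4$; the construction collapses, yielding the exceptional family, which must be handled separately by isogeny invariance in the spirit of subsection \ref{subsection:exceptionalZmod4}. When $A^2 + 4$ is a power of $2$, i.e. $A = \pm 2$ (the curves `128b2' and `128d2'), there are no odd primes available, $\Phi$ is trivial, and $\sum_\ell i_\ell$ may fall short of $4$, so Kramer's formula is silent; for these two curves one instead checks with Sage that $2 \mid M$, which settles the weak Gross--Zagier conjecture directly. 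I expect the genuinely delicate part to be the honest determination of the local Selmer images at $2$ and at the ramifying primes, for $E$ and $E_d$ at once, so as to guarantee that $\ell$ survives in both the $\phi$- and $\phi_d$-Selmer groups simultaneously; the $d$-enumeration itself is routine by comparison.
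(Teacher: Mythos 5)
Your proposal follows the paper's own route essentially step for step: Kramer's formula, the forced congruence $d \equiv 1 \pmod 8$ coming from the bad prime $2$, the reduction to the residual cases $d = -q$ and $d = -qq'$ via $i_\infty = 1$ and $i_2 = 0$, the Goto-style local images for the $\phi$- and $\phi_d$-descents, and the production of a nontrivial class of $\Phi$ from a prime divisor of $A^2+4$, with the same two exceptional sets. (You also correctly read the standing hypothesis as $A \not\equiv 0 \pmod 4$; the ``$A \equiv 0$'' in the statement is a slip, cf.\ the Remark following Proposition \ref{prop:final_reduction_about_A_and_B_2} and the first line of the paper's proof.)

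The one concrete gap is the gloss ``$\ell \notin G$ \dots\ exactly when $A^2+4$ possesses a second distinct odd prime factor.'' That equivalence holds only when $A$ is odd. When $A \equiv 2 \pmod 4$ one has $\ord_2(A^2+4) = 3$, so $G = \langle A^2+4 \rangle = \langle 2m \rangle$ in $\sqfr{\QQ}$ with $m$ the odd part of $A^2+4$, and a single odd prime class $\ell$ can never equal $2m$ modulo squares; hence $\ell \notin G$ as soon as \emph{one} odd prime divides $A^2+4$, i.e.\ for every $A \equiv 2 \pmod 4$ except $A = \pm 2$. Read literally, your criterion declares the class of $\ell$ trivial for curves such as $A = 6$ (where $A^2+4 = 2^3 \cdot 5$), which are neither in the exceptional family ($A^2+4$ is not a prime power) nor covered by your construction, so an infinite subfamily falls through the crack. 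The repair is immediate --- compute $G$ correctly in that case, which in fact makes your construction work \emph{better} there --- and the paper instead handles $A \equiv 2 \pmod 4$ by using the class of the prime $2$ itself, which is admissible because $\Ima \delta_2 = \lbrace 1,2,5,10 \rbrace$ in that case and $2 \notin G = \langle 2m \rangle$ once $m > 1$. With that correction your argument is complete and coincides with the paper's.
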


\begin{remark}
The exceptional family will be dealt with in the next subsection \ref{subsection:exceptionalZmod2}.	
\end{remark}

\begin{proof}

Our elliptic curve $E$ is given by
\begin{equation}\label{eq:Weierstrass_B=-1}
y^2 = x^3 + Ax^2 - x,
\end{equation}
such that $A \equiv 1, 2, 3 \pmod{4}$. In any cases, the minimal discriminant of the curve becomes $\Delta_\mathrm{min} = \Delta = 2^4(A^2+4)$. In particular, the prime $2$ is always a bad one. Since $2$ must split comletely in $K$, we must have $d \equiv 1 \pmod{8}$. Since $i_q \ge 1$ for each prime divisor $q$ of $d$, we may assume that there are at most 2 prime divisors in $d$, as $i_\infty =1$ always. Glueing this with the fact that $d \equiv 1 \pmod{8}$, we have either $d = -q$ for an odd prime $q$ such that $q \equiv -1 \pmod{8}$ or $d = -qq'$, for distinct odd primes $q$ and $q'$ such that $q \equiv 1 \pmod{4}$ and $q' \equiv 3 \pmod{4}$ with either $(q,q') \equiv (1,-1) \pmod{8}$ or $(q,q') \equiv (5,-5) \pmod{8}$.

If $\ell$ is an odd prime dividing $\Delta$, i.e., $\ell \mid \lp A^2 + 4 \rp$, then by the ``sum of two squares'' theorem, we must have $\ell \equiv 1 \pmod{4}$, i.e., $\ell \equiv 1$ or $5 \pmod{8}$.

Now we compute the group $\Sel^\phi(E/\QQ)$. Note the following local images. Definitions for $\delta_\ell$ are the same as in \S \ref{section:torgp_type_4}.

\begin{itemize}
    \item $\Ima \delta_\infty = \lbrace 1 \rbrace$.
    \item $\Ima \delta_p = \ZZ_p \QQ_p^{\times 2} / \QQ_p^{\times 2}$, for odd primes $p \nmid \Delta$.
    \item $\Ima \delta_p = \sqfr{\QQ_p}$ for odd primes $p \mid \Delta$.
    \item $\Ima \delta_2 = \begin{cases}
    \lbrace 1, 5 \rbrace & \text{ if } A \equiv 1, 3 \pmod{4}, \\
    \lbrace 1, 2, 5, 10 \rbrace & \text{ if } A \equiv 2 \pmod{4}.
    \end{cases}$
\end{itemize}

Suppose that $d = -q$ with $q \equiv -1 \pmod{8}$ Since $\displaystyle \left( \frac{-q}{p} \right) = \left( -1 \right)^\frac{p-1}{2} \left( -1 \right)^{\frac{p-1}{2}\frac{q-1}{2}} \left( \frac{p}{q} \right)$, we have $\displaystyle \left( \frac{p}{q} \right) = 1$ for any odd prime $p \mid (A^2 +4)$. If $A \equiv 1, 3 \pmod{4}$, then $A^2 +4$ is odd, and as every prime divisor of $A^2 +4$ has odd exponent, we can conclude that $\displaystyle \left( \frac{A^2 + 4}{q} \right) = 1$ because the left hand side of the expression is the product of $\displaystyle \left( \frac{p}{q} \right)$ running over all primes $p \mid (A^2+4)$. Secondly, suppose that $A \equiv 2 \pmod{4}$. This means that there is an integer $k$ such that $A = 2+4k$, whence $A^2 + 4 = 16k^2 + 16k +8 = 2^3 (2k^2 + 2k +1)$, so $\ord_2 (A^2 + 4) = 3$. In this case, $\displaystyle \left( \frac{A^2+4}{q} \right) = \left( \frac{2}{q} \right) \prod_{\substack{p \text{ odd primes, } \\ p \mid A^2+4}} \left( \frac{p}{q} \right) = 1$, since $q \equiv -1 \pmod{8}$. Therefore, we always have $\displaystyle \left( \frac{A^2 + 4}{q} \right) = 1$, i.e., $i_q = 2$, whence $\sum i_\ell = 3$.

Now consider the Selmer group $\Sel^{\phi_d}(E_d/\QQ)$. The local images are given as follows.
\begin{itemize}
    \item $\Ima \delta_\infty^d = \lbrace 1 \rbrace$.
    \item $\Ima \delta_p^d = \ZZ_p \QQ_p^{\times 2} / \QQ_p^{\times 2}$, for odd primes $p \nmid \Delta q$.
    \item $\Ima \delta_p^d = \sqfr{\QQ_p}$ for odd primes $p \mid \Delta$.
    \item $\Ima \delta_q^d = \lbrace 1 \rbrace$.
    \item $\Ima \delta_2^d = \begin{cases}
    \lbrace 1, 5 \rbrace & \text{ if } A \equiv 1, 3 \pmod{4}, \\
    \lbrace 1, 2, 5, 10 \rbrace & \text{ if } A \equiv 2 \pmod{4}.
    \end{cases}$
\end{itemize}
If $A \equiv 2 \pmod{4}$ then $\ord_2 (A^2 +4) = 3$. As $2$ is a quadratic residue modulo $q$, and since $A^2 + 4$ must have at least one odd prime except for the cases $A = \pm 2$, the image of $2$ gives a nontrivial element in $\Phi$. When $A = \pm 2$, the curve is equal to `128b2' or `128d2'. In these cases $M = 2$. If $A \equiv 1$ or $3 \pmod{4}$ and if there are at least two prime divisors of $A^2 + 4$, then we can also find a nontrivial element in $\Phi$. If $A^2 +4$ is a power of a prime, then this will be dealt as exceptional cases. See \ref{subsection:exceptionalZmod2}.

Suppose that $d = -qq'$ with $q \equiv 1 \pmod{4}$ and $q' \equiv 3 \pmod{4}$. First note that we are reduced to the case that $\displaystyle \left( \frac{A^2 + 4}{q} \right) = \left( \frac{A^2+4}{q'} \right) = -1$, since otherwise we have $\sum i_\ell \ge 4$. Moreover, if $\ell \mid A$, for $\ell = q$ or $q'$ then we have $\quadsym{A^2 + 4}{\ell} = \quadsym{4}{\ell} = 1$, a contradiction. Now we impose the Heegner hypothesis. At first, since the prime $2$ must split completely in $K$, so thus $d \equiv 1 \pmod{8}$, and we have $(q, q') \equiv (1,  -1)$ or $\equiv (5,-5) \pmod{8}$. For odd primes $p$ dividing $\Delta$, we must have $p \equiv 1 \pmod{4}$ by `sum of two squares theorem', and thus we have to have either $\quadsym{p}{q} = \quadsym{p}{q'} = 1$ or $\quadsym{p}{q} = \quadsym{p}{q'} = -1$.

Local images for the Selmer group $\Sel^{\phi_d}(E_d/\QQ)$ are given as follows:
\begin{itemize}
    \item $\Ima \delta_\infty^d = \lbrace 1 \rbrace$;
    \item $\Ima \delta_p^d = \ZZ_p^\times \QQ_p^{\times 2} / \QQ_p^{\times 2}$, for odd primes $p \nmid \Delta q$ (including $p = q'$);
    \item $\Ima \delta_p^d = \sqfr{\QQ_p}$, for odd primes $p \mid \Delta q$;
    \item $\Ima \delta_2^d = \begin{cases}
    \lbrace 1, 5 \rbrace & \text{ when } A \equiv 1, 3 \pmod{4}, \\
    \lbrace 1, 2, 5, 10 \rbrace & \text{ when } A \equiv 2 \pmod{4}.
    \end{cases}$
\end{itemize}
Similar as above, if $A \equiv 2 \pmod{4}$, then the image of $2$ in $\Phi$ is a non-trivial element, so that $\dim_{\FF_2} \Phi \ge 1$. Now assume $A$ is odd. If $A^2 + 4$ have at least two distinct odd prime divisor, then the image of either one of them gives a non-trivial element in $\Phi$. If $A^2 +4$ is a power of a prime, then this will be dealt as exceptional cases. See \ref{subsection:exceptionalZmod2}.
\end{proof}

\begin{proposition}
Suppose that $B = 1$. By the considerations of the subsection above, we assume
\begin{itemize}
	\item $A \equiv 0 \pmod{4}$, $A \equiv 1 \pmod{4}$, $A \equiv 10 \pmod{16}$, $A \equiv 62 \pmod{128}$, or $A \equiv 126 \pmod{128}$ (in the last case we also assume $\ord_2 (A +2 )$ is odd); and
	\item if $\ell$ is an odd prime dividing $A^2 - 4B = A^2 - 4$, then it has odd exponent.
\end{itemize}
Then we have $\sum i_\ell + \dim_{\FF_2} \Phi \ge 4$, i.e., $2 \mid \lp \# \Sh(E/K) \rp^{1/2}$, except for `17a3', `32a3', and `80a2', for which $2 \mid M$.
\end{proposition}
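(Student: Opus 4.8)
The plan is to follow the strategy used for the case $B = -1$ treated above, adapted to the curve
\begin{equation*}
E : y^2 = x^3 + Ax^2 + x = x\lp x^2 + Ax + 1 \rp,
\end{equation*}
whose discriminant is $\Delta = 2^4\lp A^2 - 4 \rp$ and whose minimal discriminant is a divisor of this (with the power of $2$ governed by Proposition \ref{prop:final_reduction_about_A_and_B_2}). First I would record $i_\infty = 1$, using the standing assumption $\Delta > 0$ and Proposition \ref{prop:computing_local_norm_indices}(a). I would then use the Heegner hypothesis together with the lower bounds $i_q \ge 1$ for odd primes $q \mid d$ (Proposition \ref{prop:computing_local_norm_indices}(b)) to cut the possibilities for $d$ down to a handful of shapes. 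Since $2$ is a bad prime except when $A \equiv 62 \pmod{128}$, the splitting of $2$ forces $d \equiv 1 \pmod 8$ (hence $d$ odd) in all but that one sub-case; in the good-reduction sub-case $2$ may ramify, and I would compute $i_2$ from the Hilbert symbol $\lp \Delta_\mathrm{min}, d \rp_{\QQ_2}$ as in Proposition \ref{prop:computing_local_norm_indices}(c). After discarding the configurations for which $\sum i_\ell \ge 4$ already holds, I expect to be reduced to $d$ with at most two odd prime factors, i.e.\ the shapes $d = -q$, $d = -2q$, $d = -qq'$ (and $d = -2$ in the good-reduction sub-case), to be handled one at a time.

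The heart of the argument is then to produce a nontrivial class in $\Phi = \Sel^2(E/\QQ) \cap \Sel^2(E_d/\QQ)$ via Proposition \ref{prop:str_of_Phi}. Following \cite{Go}, I would tabulate the local images $\Ima \delta_\ell \subset \sqfr{\QQ_\ell}$ of the $\phi$-descent for $E$, where $\phi : E \to E/\langle (0,0) \rangle$, together with the images $\Ima \delta_\ell^d$ for the twist $E_d$; the only nonroutine images occur at $2$, at the primes dividing $d$, and at the odd primes dividing $\Delta$. By Proposition \ref{prop:kernel_of_selmer}, a class $b \in \sqfr{\QQ}$ lying in $\Sel^\phi(E/\QQ) \cap \Sel^{\phi_d}(E_d/\QQ)$ but outside $G = \langle A^2 - 4 \rangle$ yields the desired nontrivial element of $\Phi$, giving $\sum i_\ell + \dim_{\FF_2}\Phi \ge 4$ and hence $2 \mid \lp \# \Sh(E/K) \rp^{1/2}$ by Theorem \ref{th:Kr}. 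The candidates for $b$ are built from the prime divisors of $A - 2$, of $A + 2$, and of $2$; deciding which candidate survives amounts to controlling the Legendre symbols $\quadsym{\ell}{q}$, which I would do by combining quadratic reciprocity, the Heegner splitting condition $\quadsym{d}{\ell} = 1$ for $\ell \mid \Delta_\mathrm{min}$, and the $2$-adic congruence on $A$ imposed by the standing assumptions.

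The decisive new feature, and the step I expect to be the main obstacle, is that $A^2 - 4 = (A-2)(A+2)$ is a \emph{difference} of squares rather than a sum. In the $B = -1$ case the ``sum of two squares'' theorem pinned every odd prime divisor of $A^2 + 4$ to $\ell \equiv 1 \pmod 4$, which aligned the symbols $\quadsym{\ell}{q}$ automatically; here no such congruence is available, so I must carry the factorization $(A-2)(A+2)$ through the computation, treat both residues of $\ell$ modulo $4$, and perform a correspondingly finer $2$-adic analysis across the several admissible classes of $A$ modulo $128$. The compensating advantage is that this same factorization also controls the degenerate case: two integers differing by $4$ cannot both be large powers of a single prime, so $A^2 - 4$ is a prime power only for finitely many $A$. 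This is precisely why the exceptional set here is finite, in contrast with the infinite exceptional family encountered for $B = -1$.

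Finally, when $A^2 - 4$ has too few distinct odd prime divisors to furnish a class $b \notin G$, the construction of a $\Phi$-element breaks down, and these residual curves must be examined directly. By the previous remark there are only finitely many such $A$; I would enumerate them with \cite{sagemath} and verify that the survivors are exactly `17a3', `32a3', and `80a2', for each of which one checks $2 \mid M$, so that the divisibility $2 \mid u_K \cdot C \cdot M \cdot \lp \# \Sh(E/K) \rp^{1/2}$ holds regardless.
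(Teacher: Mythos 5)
Your overall architecture is the same as the paper's: the paper disposes of this proposition by grouping $A$ into three categories according to the behaviour at $2$ (good reduction when $A \equiv 62 \pmod{128}$, the class $A \equiv 126 \pmod{128}$, and the rest) and then running the $B=-1$ template --- local norm indices to reduce to $d = -q$, $d = -2q$, $d=-qq'$, then a $\phi$-descent \`a la \cite{Go} to exhibit a class of $\Phi$ outside $G = \langle A^2-4\rangle$. Your setup of $i_\infty$, the constraint $d \equiv 1 \pmod 8$ away from the good-reduction subcase, and the use of Propositions \ref{prop:str_of_Phi} and \ref{prop:kernel_of_selmer} all match.

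There is, however, a genuine gap in your treatment of the residual cases. You justify the finiteness of the exceptional set by observing that $A^2-4=(A-2)(A+2)$ is a prime power for only finitely many $A$, and then assert that the curves for which no class $b \notin G$ can be produced are covered ``by the previous remark.'' These are not the same set. The descent can a priori fail whenever $A^2-4$ has at most one \emph{odd} prime divisor, i.e.\ $A^2-4 = 2^j\ell^k$, and there are infinitely many such $A$ inside your admissible congruence classes: for instance $A = 2^{n}-2$ with $2^{n-2}-1$ prime gives $A+2 = 2^{n}$, $A-2 = 4(2^{n-2}-1)$, and for odd $n \ge 7$ this lands in the class $A \equiv 126 \pmod{128}$ with $\ord_2(A+2)$ odd (e.g.\ $A=126$, $A^2-4 = 2^9\cdot 31$). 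So the set you propose to ``enumerate with Sage'' is not known to be finite by your remark, and the enumeration plan is circular. The repair is to show that for these one-odd-prime curves the single prime $\ell$ (or the class of $2$, or $2\ell$) still yields a nontrivial element of $\Phi$ --- e.g.\ when $\ord_2(A^2-4)$ is odd one has $\ell \notin G$ --- and that the genuine failures are confined to the finitely many $A$ with $A^2-4 = \pm 2^j$ or with $\ell \in G$ and all $2$-adic candidates excluded locally (which is what singles out `17a3', `32a3', `80a2'). This verification at the prime $2$ and at $\ell$, together with the alignment of the symbols $\quadsym{\ell}{q}$ without the sum-of-two-squares theorem, is precisely the content of the proof, and your proposal defers it rather than supplying it.
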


\begin{proof}

The main difficulty to prove this proposition is due to a large number of cases for $A$. The key to overcome is to group the various cases into 3 categories: (i) $E$ has good reduction modulo $2$, i.e., $A \equiv 62 \pmod{128}$, (ii) $A \equiv 126 \pmod{128}$, and (iii) the remaining cases. After this, proofs are nothing special.
\end{proof}

\begin{proposition}
Suppose that $B = 16$. By the considerations of the subsection above, we assume
\begin{itemize}
	\item $A \equiv 1 \pmod{4}$, and
	\item if $\ell$ is an odd prime dividing $A^2 - 4B = A^2 - 64$, then it has odd exponent.
\end{itemize}
Then we have $\sum i_\ell + \dim_{\FF_2} \Phi \ge 4$, i.e., $2 \mid \lp \# \Sh(E/K) \rp^{1/2}$, except for `17a4', for which $2 \mid M$.
\end{proposition}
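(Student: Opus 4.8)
The plan is to mirror the $2$-descent and Kramer's-formula argument of the $B=-1$ case, exploiting the factorisation $A^2-4B=A^2-64=(A-8)(A+8)$ that is special to $B=16$. First I would assemble the local data. Since $A\equiv 1\pmod 4$ and $\ord_2 B=4$, the curve has good reduction at $2$, the minimal discriminant is $\Delta_\mathrm{min}=A^2-64$, and the standing assumption $\Delta>0$ gives $i_\infty=1$. The factors $A-8$ and $A+8$ are odd and coprime, so $E':=E/\langle(0,0)\rangle$ has the model $y^2=x\lp x-(A-8)\rp\lp x-(A+8)\rp$ and thus carries the full rational $2$-torsion; the classes of $A-8$ and $A+8$ in $\sqfr{\QQ}$ coincide modulo $G=\langle A^2-64\rangle$, and since $\torgp\simeq\ZZ/2\ZZ$ forbids a $4$-torsion point neither factor is a square. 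By Proposition \ref{prop:kernel_of_selmer} this common class is therefore the natural nontrivial candidate for $\Phi$.

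The one genuinely new structural input is that $A^2-64$ is a prime power only finitely often: coprimality of $A\pm 8$ forces one of them to equal $\pm 1$, and together with $A\equiv 1\pmod 4$ and $A^2-64>0$ this leaves only $A=9$, i.e. the curve `17a4', for which $M=4$. This is exactly why, unlike the $B=\pm 1$ cases, there is no infinite exceptional family here. Outside `17a4' I may thus assume $A^2-64$ has at least two distinct prime divisors, one dividing $A-8$ and one dividing $A+8$. I would then bound $\sum i_\ell$: as $i_\infty=1$, as $i_q\ge 1$ for each odd $q\mid d$, and as $i_2\in\{1,2\}$ when $2$ ramifies, the inequality $\sum i_\ell\ge 4$ holds automatically unless $d$ has the shape $-2$, $-q$, $-2q$, or $-qq'$. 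For these shapes the Heegner hypothesis does the work: the splitting conditions $\quadsym{d}{\ell}=1$ for $\ell\mid A^2-64$, fed through quadratic reciprocity and the hypothesis that each such $\ell$ occurs to an odd power, force the governing Legendre symbols to be trivial; for example when $q\equiv 3\pmod 4$ one gets $\quadsym{A^2-64}{q}=1$, hence $i_q=2$ and $\sum i_\ell=3$, so that only a single nontrivial element of $\Phi$ is needed.

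To produce that element I would compute, following \cite{Go} and exactly as in the $B=-1$ proof, the local images $\Ima\delta_\ell$ of $\Sel^\phi(E/\QQ)$ and $\Ima\delta^d_\ell$ of $\Sel^{\phi_d}(E_d/\QQ)$: unramified classes at good odd primes, all of $\sqfr{\QQ_\ell}$ at odd $\ell\mid\Delta$, the order-two image $\{1,5\}$ at the good prime $2$, and the twisted images at the ramified primes $\ell\mid d$. When $A>8$ the representative $A-8$ is positive, so its image in $H^1(\QQ,E[2])$ restricts trivially at $\infty$; one then checks that it lies in $\Sel^\phi(E/\QQ)$ (it is supported on primes dividing $A^2-64$, is a $2$-adic unit congruent to $1$ or $5\bmod 8$, and clears the archimedean condition) and, using the Heegner-forced symbols above, also in $\Sel^{\phi_d}(E_d/\QQ)$. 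Since it avoids $G$, this yields $\dim_{\FF_2}\Phi\ge 1$ and hence $\sum i_\ell+\dim_{\FF_2}\Phi\ge 4$.

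The hard part will be the archimedean and $2$-adic bookkeeping when $A<-8$. Then both $A\pm 8$ are negative, the real image of the $\phi_d$-descent on $E_d$ is trivial, and the $\langle(0,0)\rangle$-valued witness fails the condition at $\infty$; worse, the positive twist $d(A-8)$ that clears $\infty$ acquires a ramified prime $q\mid d$ and so fails to be unramified for $E$ at $q$. I therefore expect to pass to the full $2$-descent on the full-$2$-torsion curve $E'$ (equivalently, to combine the $\phi$- with the $\hat\phi$-descent) in order to locate a witness valued in one of the remaining $2$-torsion lines, or else to fall back on the isogeny-invariance of the conjecture (Proposition \ref{prop:isoginv}) through $E'$, for which the Main Theorem is already known by \S\ref{section:torgp_type_2_2}. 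Organising this sign-dependent choice uniformly across the few shapes of $d$, and clearing the finitely many remaining small-$|A|$ configurations together with `17a4' via the recorded divisibility $2\mid M$, is where the remaining effort concentrates.
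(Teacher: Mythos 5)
Your overall architecture is the right one and matches what the paper intends by ``Proofs are similar as above'': bound $\sum i_\ell$ using the Heegner condition and reciprocity, then manufacture one nontrivial element of $\Phi$ from a class supported on $A^2-64$, with Proposition~\ref{prop:kernel_of_selmer} guaranteeing nontriviality. Your identification of the special structure of $B=16$ is correct and is the right key: $A^2-64=(A-8)(A+8)$ with odd coprime factors, so $E'=E/\langle(0,0)\rangle$ has full rational $2$-torsion, the prime-power case collapses to $A\in\{9,7,-7,-9\}$ and then (via $A\equiv 1\pmod 4$ and $\Delta>0$) to `17a4' alone, and $A\pm 8$ cannot be squares because that would force a rational $4$-torsion point. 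One simplification you missed: $A-8$ and $A+8$ are the images of the rational $2$-torsion points $(A-8,0),(A+8,0)\in E'(\QQ)$ under the connecting map, so their membership in $\Sel^\phi(E/\QQ)$ is automatic and needs no local verification; only membership in $\Sel^{\phi_d}(E_d/\QQ)$ has to be checked prime by prime.

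The genuine gap is exactly the one you flag but do not close: the case $A<-8$ (i.e.\ $A\le -11$, which occurs, e.g.\ for conductor $57$). There both roots $(A\pm 8)d$ of $E'_d$ are nonnegative, so $\Ima\delta^d_\infty=\lbrace 1\rbrace$ and the negative classes $A\pm 8$ fail at $\infty$; and the positive representatives $8-A$ and $-A-8$ are $\equiv 3\pmod 4$ (since $A\equiv 1\pmod 4$), so they fall outside $\Ima\delta_2=\lbrace 1,5\rbrace$ at the good prime $2$. When, say, $d=-q$ with $q\equiv 3\pmod 4$ one really does get $\sum i_\ell\le 3$, so a witness is required, and if both $8-A$ and $-A-8$ are prime there is no class supported on $\lbrace -1,2\rbrace\cup\lbrace \ell: \ell\mid A^2-64\rbrace$ that simultaneously clears $\infty$ and $2$ except the elements of $G$. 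So for this subfamily the $\phi$-descent picture alone cannot produce $\dim_{\FF_2}\Phi\ge 1$, and one must either bring in the $\phi'$-descent/full $2$-descent (equivalently the $NS'$ term of Theorem~\ref{th:Kr}, which the paper discards), or run the isogeny-invariance argument of Proposition~\ref{prop:isoginv} through $E'$, which has $E'(\QQ)[2]\simeq\ZZ/2\ZZ\oplus\ZZ/2\ZZ$ and is covered by \S\ref{section:torgp_type_2_2} --- but that route additionally requires controlling $\ord_2\#E'(K)_\mathrm{tors}$ versus $\ord_2\#E'(\QQ)_\mathrm{tors}$ via the $4$-division polynomial, as in \S\ref{subsection:exceptionalZmod4} and \S\ref{subsection:exceptionalZmod2}. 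Since the paper itself supplies no argument for $B=16$, your reconstruction is as complete as the text, but as a proof it remains open precisely on this negative-$A$ branch.
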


\begin{proof}
Proofs are similar as above.
\end{proof}

\begin{proposition}
Suppose that $B = -16$. By the considerations of the subsection above, we assume
\begin{itemize}
	\item $A \equiv 1 \pmod{4}$, and
	\item if $\ell$ is an odd prime dividing $A^2 - 4B = A^2 + 64$, then it has odd exponent.
\end{itemize}
Then we have $\sum i_\ell + \dim_{\FF_2} \Phi \ge 4$, i.e., $2 \mid \lp \# \Sh(E/K) \rp^{1/2}$, except for
\begin{itemize}
    \item $A = 15$, in this case the curve is `272b2' having $C_2 = 2$;
    \item the family characterised by the condition that $A^2 + 64$ is a prime, having $M=2$ for any curve in this family.
\end{itemize}
\end{proposition}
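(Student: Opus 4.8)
The plan is to adapt the $B=-1$ argument to the curve $E\colon y^2 = x^3 + Ax^2 - 16x$. First I would fix the reduction at $2$: since $\ord_2 B = 4$ and $A \equiv 1 \pmod 4$, the table in Proposition \ref{prop:final_reduction_about_A_and_B}(b) gives good reduction at $2$, and dividing the model by $u = 2$ yields the minimal discriminant $\Delta_\mathrm{min} = A^2 + 64$. As $A$ is odd, $A^2 \equiv 1 \pmod 8$, hence $\Delta_\mathrm{min} \equiv 1 \pmod 8$, so $\Delta_\mathrm{min}$ is a square in $\QQ_2^\times$. The crucial consequence is that the Hilbert symbol $\lp \Delta_\mathrm{min}, d \rp_{\QQ_2}$ equals $+1$ for every $d$; by Proposition \ref{prop:computing_local_norm_indices}(c) this forces $i_2 = 2$ whenever $2$ ramifies in $K$. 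Writing $A^2 + 64 = A^2 + 8^2$ with $\gcd(A,8)=1$, the sum-of-two-squares theorem shows every odd prime divisor $\ell$ of $\Delta_\mathrm{min}$ satisfies $\ell \equiv 1 \pmod 4$, and by hypothesis each occurs to odd exponent; these facts govern all the Legendre symbols below.

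Next I would run the case analysis on $d$ using $i_\infty = 1$, the bound $i_q \ge 1$ for each odd prime $q \mid d$ (such $q$ is automatically good for $E$ by the Heegner hypothesis, and carries the rational $2$-torsion, so $\dim_{\FF_2}E[2](\FF_q)\ge 1$), and $i_2 = 2$ when $2$ ramifies. If $2$ ramifies (that is, $d \not\equiv 1 \pmod 4$) and $d$ has any odd prime divisor, then $i_\infty + i_2 + i_q \ge 4$ and we are done; the sole surviving ramified value is $d = -2$. If $2$ is unramified ($d \equiv 1 \pmod 4$) then $i_2 = 0$, and three odd prime divisors already force $\sum i_\ell \ge 4$, leaving $d = -q$ with $q \equiv 3 \pmod 4$ and $d = -qq'$. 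For $d = -q$ the Heegner condition $\quadsym{-q}{\ell}=1$ at each bad $\ell$, together with $\ell \equiv 1 \pmod 4$ and quadratic reciprocity, gives $\quadsym{\ell}{q}=1$ and hence $\quadsym{A^2+64}{q}=1$, i.e. $i_q = 2$; so again $\sum i_\ell = 3$. Thus in every remaining case ($d = -2$; $d = -q$ with $q \equiv 3 \pmod 4$; $d = -qq'$ with $i_q = i_{q'} = 1$) it suffices to exhibit $\dim_{\FF_2}\Phi \ge 1$.

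To produce a nontrivial element of $\Phi$ I would carry out the $\phi$-descent of \cite{Go} for $\phi\colon E \to E/E(\QQ)[2]$, computing the local images $\Ima\delta_\ell \subset \sqfr{\QQ_\ell}$ of $\Sel^\phi(E/\QQ)$ and the twisted images $\Ima\delta_\ell^d$ of $\Sel^{\phi_d}(E_d/\QQ)$, exactly as in the odd-prime-power and $B=-1$ computations; since $2$ is now good, its image is the small unramified class (of the shape $\lbr 1, 5 \rbr$) rather than being automatically large, while at odd $\ell$ the image is $\sqfr{\QQ_\ell}$ for bad $\ell$ and the unramified classes otherwise. By Proposition \ref{prop:str_of_Phi} a class lies in $\Phi$ once it lies in both Selmer groups, and by Proposition \ref{prop:kernel_of_selmer} it is nontrivial iff it is not the class of $A^2 - 4B = A^2 + 64$, the generator of $G$. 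Since all bad primes occur to odd exponent, $A^2 + 64$ is congruent modulo squares to the product of its distinct odd prime divisors; hence the class of a single bad prime $\ell$ (adjusted by $q$ where necessary) is a valid candidate precisely when $A^2 + 64$ has at least two distinct prime factors, and the Heegner plus sum-of-two-squares constraints make it a local norm at every place. This yields $\dim_{\FF_2}\Phi \ge 1$, whence $\sum i_\ell + \dim_{\FF_2}\Phi \ge 4$ and $2 \mid \lp \# \Sh(E/K) \rp^{1/2}$ by Kramer's formula (Theorem \ref{th:Kr}).

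Finally I would treat the two exceptions. When $A^2 + 64$ is a single prime (the Diophantine reduction ruling out higher odd prime powers being analogous to the one invoked in \S\ref{subsection:exceptionalZmod4}) there is no second prime with which to form a class outside $G$, so the descent produces nothing new; but this is exactly the family in which the Manin constant equals $2$, and $2 \mid M$ then settles the divisibility. The exceptional curve `272b2' is handled separately because its descent fails yet $C_2 = 2$ by Tate's algorithm, so $2 \mid C$ directly. The main obstacle is the descent bookkeeping of the third paragraph: computing every local image $\Ima\delta_\ell$ and $\Ima\delta_\ell^d$ for this model, and then threading the Heegner quadratic-residue conditions through the $\ell \equiv 1 \pmod 4$ constraint so that the class of a bad prime is simultaneously everywhere a local norm and distinct from the class of $A^2 + 64$ — a verification that must be organised carefully across the subcases of $d$ and the residue of $A$ modulo $4$.
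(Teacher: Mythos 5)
Your proposal is correct and follows essentially the same route the paper intends: the paper's own proof of this proposition is literally ``Proofs are similar'' (to the $B=-1$ and $\ZZ/4\ZZ$ arguments, where $2$ is a good prime), and your adaptation — good reduction at $2$ with $\Delta_\mathrm{min} = A^2+64 \equiv 1 \pmod{8}$ forcing $i_2=2$ whenever $2$ ramifies, the sum-of-two-squares constraint $\ell \equiv 1 \pmod 4$ on bad primes, the case split on $d$, and the $\phi$-descent producing a nontrivial class of $\Phi$ out of a second prime factor of $A^2+64$ — is exactly the argument that remark points to, with the Neumann--Setzer family ($M=2$, via Stein--Watkins) as the residual exception. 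The only quibble is that $d=-1$ also survives your local-norm-index count (the paper omits this case in the analogous propositions as well); it is disposed of either by the same descent or simply by $u_K=2$.
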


\begin{proof}
Proofs are similar. The exceptional family here is called Neumann--Setzer family, and $M =2$ can be found in \cite{SW04}.
\end{proof}


\subsection{Exceptional case}
\label{subsection:exceptionalZmod2}

In this case we deal with the cases where $A^2 + 4$ is a power of an odd prime. By Lemma 5.4 of Cao--Chu--Shiu \cite{CCS}, then $A^2 + 4$ is a prime unless either $A = 2$ or $A = 11$. For those two non-prime cases, the corresponding curves are `128d2' and `80b4', and both of them have Manin constant 2. Excluding these cases, we assume $A^2 + 4$ is a prime.

This family is parametrised by the following Weierstrass equation: $y^2 = x^3 + Ax^2 - x$, where $A$ is an integer not divisible by $4$, and $A^2 + 4 = p$ is an odd prime. It has discriminant $\Delta=16(A^2+4) = 16p$, which is the minimal discriminant. The conductor of the curve $E$ is $4p$ and $E(\QQ)_\mathrm{tors} = \ZZ/2\ZZ$.

Let $G = E(\QQ)_\mathrm{tors} \simeq \ZZ/2\ZZ$, and consider the curve $E' := E/G$. By Vélu's formula, we can find a Weierstrass equation for $E'$. This is given as follows:
\begin{equation*}
y^2 = x^3 + Ax^2 + 4x + 4A
\end{equation*}
with discriminant $\Delta' = -2^8 (A^2 + 4)^2 = -2^8 p^2$. As the $2$-torsion polynomial for $E'$ is given by
\begin{equation*}
4(x^2 + 4)(x+A),
\end{equation*}
we must have a rational 2-torsion point $P = (-A, 0) \in E'(\QQ)$, i.e., $\ZZ/2\ZZ \subseteq E'(\QQ)_\mathrm{tors}$. If $E'(\QQ)_\mathrm{tors} \supsetneq \ZZ/2\ZZ$, the weak Gross--Zagier conjecture is proved in the above sections. So we assume $E'(\QQ) = \ZZ/2\ZZ$. Making a change of variables $x \mapsto x' = x+ A$, we get another Weierstrass equation
\begin{equation*}
y^2 = x^3 - 2Ax^2 + (A^2 + 4)x.
\end{equation*}
By Tate's algorithm, we know that $C_p = 2$. Thus the weak Gross--Zagier conjecture is true for $E'$ unconditionally.

Now we consider $E'(K)_\mathrm{tors}$ for quadratic field $K$ satisfying Heegner hypothesis. If $\ord_2 E'(K)_\mathrm{tors} > \ord_2 E'(\QQ)_\mathrm{tors}$, then $E'(K)$ must contain either $\ZZ/2\ZZ \oplus \ZZ/2\ZZ$ or $\ZZ/4\ZZ$. For the first case, the 2-torsion polynomial of $E'$ must split into linear factors in $K$. As the polynomial is $4(x^2 + 4)(x+A)$, this happens if and only if $K=\QQ(\sqrt{-1})$. But in this case $u_K = 2$, and the weak conjecture is also true for $E$.

Now suppose that $E(K)_\mathrm{tors} \ge \ZZ/4\ZZ$. By Lemma 13 in \cite{GJT}, we must have $A^2 + 4 = s^2$ for some $s \in \QQ$. But since $A^2 + 4 = p$ is a prime, we cannot have this case. Therefore, we have the following corollary to Proposition \ref{prop:isoginv}.

\begin{corollary}
The weak Gross--Zagier conjecture is true for $E$ in this family and for any quadratic field $K$ satisfying Heegner hypothesis.
\end{corollary}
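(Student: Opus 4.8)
The plan is to deduce the corollary from the isogeny invariance of the Gross--Zagier conjecture, specifically Proposition \ref{prop:isoginv}(c), applied to the dual isogeny $\theta \colon E' \to E$ with modular parametrisations chosen so that $\pi = \theta \circ \pi'$. Since $\torgp \simeq \ZZ/2\ZZ$, the only prime whose divisibility we must establish is $p = 2$; thus it suffices to verify the two hypotheses of Proposition \ref{prop:isoginv}(c) at $p = 2$, with source curve $E'$ and target curve $E$.

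Hypothesis (ii) is already secured: we showed above, via the equation $y^2 = x^3 - 2Ax^2 + (A^2+4)x$, that $C_p = 2$ (where $p = A^2 + 4$) and hence $2 \mid C'$, while $\# E'(\QQ)_\mathrm{tors} = 2$, so the weak Gross--Zagier conjecture holds for $E'$ unconditionally, i.e. $\ord_2 \# E'(\QQ)_\mathrm{tors} \le \ord_2 \lp u_K \cdot C' \cdot M' \cdot \lp \# \Sh(E'/K) \rp^{1/2} \rp$. Consequently the whole task reduces to checking hypothesis (i), namely
\[
\ord_2 \# E'(K)_\mathrm{tors} = \ord_2 \# E'(\QQ)_\mathrm{tors} = 1
\]
for every imaginary quadratic field $K$ satisfying the Heegner hypothesis for $E$.

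This is exactly the torsion-growth analysis carried out in the preceding paragraphs, which I would invoke directly. If the $2$-part of $E'(K)_\mathrm{tors}$ were strictly larger than that of $E'(\QQ)_\mathrm{tors}$, then $E'(K)$ would contain $\ZZ/2\ZZ \oplus \ZZ/2\ZZ$ or $\ZZ/4\ZZ$. The first possibility forces the $2$-division polynomial $4(x^2+4)(x+A)$ to split over $K$, i.e. $K = \QQ(\sqrt{-1})$; the second forces $A^2 + 4$ to be a rational square by Lemma 13 of \cite{GJT}, which is impossible because $A^2 + 4 = p$ is prime. Hence hypothesis (i) holds for all Heegner-admissible $K$ except possibly $\QQ(\sqrt{-1})$, and Proposition \ref{prop:isoginv}(c) then yields $2 = \# \torgp \mid u_K \cdot C \cdot M \cdot \lp \# \Sh(E/K) \rp^{1/2}$ for those $K$.

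The only residual point --- which I expect to be the sole subtlety rather than a genuine obstacle --- is the field $K = \QQ(\sqrt{-1})$. Here $2 \mid N = 4p$ and $2$ ramifies in $\QQ(\sqrt{-1})$, so this field does not satisfy the Heegner hypothesis for $E$ and may be discarded outright; alternatively, even if one wished to include it, $u_K = 2$ already contributes the required factor of $2$ to $u_K \cdot C \cdot M \cdot \lp \# \Sh(E/K) \rp^{1/2}$, so the conclusion survives. Either way the divisibility holds for every admissible $K$, completing the proof of the corollary.
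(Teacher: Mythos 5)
Your proposal is correct and follows essentially the same route as the paper: reduce to Proposition \ref{prop:isoginv}(c) via the dual isogeny $E' \to E$, use $C_p = 2$ for $E'$ to secure hypothesis (ii), and rule out $2$-torsion growth over $K$ via the $2$-division polynomial and Lemma 13 of \cite{GJT}. Your extra observation that $K = \QQ(\sqrt{-1})$ is already excluded by the Heegner hypothesis (since $2 \mid N = 4p$ ramifies there) is a valid small shortcut; the paper instead keeps that case and absorbs it with $u_K = 2$, exactly as in your fallback.
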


\part{$E(\QQ)_\mathrm{tors}$ has a point of order 3}

\section{Preliminaries for Part 2}
\label{section:preliminaries_part_2}

\subsection{Optimal curves}
\label{subsection:optimalcurves}

For a positive integer $N$, let $X_1(N)$ and $X_0(N)$ denote the usual modular curves defined over $\QQ$. Let $\mathcal{C}$ denote an isogeny class of elliptic curves defined over $\QQ$ of conductor $N$. For $i=0,1$, there is a unique curve $E_i \in \mathcal{C}$ and a parametrisation $\pi_i: X_i(N) \to E_i$ such that for any $E \in \mathcal{C}$ and parametrisation $\pi_i': X_i(N) \to E$, there is an isogeny $\phi_i:E_i \to E$ such that $\phi_i \circ \pi_i= \pi_i'$.  For $i=0,1$, the curve $E_i$ is called the $X_i(N)$-\emph{optimal curve}.

In \cite{BY}, the authors proved the following theorem, which was conjectured by Stein and Watkins \cite{SW02}.

\begin{theorem}[\cite{BY}, Theorem 1.1]\label{th:Byeon--Yhee}
For $i=0,1$, let $E_i$ be the $X_i(N)$-optimal curve of an isogeny class $\mathcal{C}$ of elliptic curves defined over $\QQ$ of conductor $N$. If there is an elliptic curve $E \in \mathcal{C}$ given by $y^2+axy+y=x^3$ with discriminant $\Delta = a^3-27=(a-3)(a^2+3a+9)$, where $a$ is an integer such that no prime factors of $a-3$ are congruent to $1$ modulo $6$ and $a^2+3a+9$ is a power of a prime number, then $E_0$ and $E_1$ differ by a 3-isogeny, which means that there is an isogeny $\pi: E_0 \to E_1$ with $3 \mid \deg(\pi)$.
\end{theorem}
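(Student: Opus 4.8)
The plan is to compare $E_0$ and $E_1$ through the \emph{Shimura subgroup} $\Sigma := \ker\bigl(\pi^\ast\colon J_0(N) \to J_1(N)\bigr)$, where $\pi^\ast$ is induced by the degeneracy covering $X_1(N) \to X_0(N)$. Writing $\phi_0\colon J_0(N) \twoheadrightarrow E_0$ and $\phi_1\colon J_1(N) \twoheadrightarrow E_1$ for the optimal parametrisations, the optimality of $E_0$ (connectedness of $\ker\phi_0$) lets me factor the nonzero Hecke-equivariant map $\phi_1 \circ \pi^\ast$ as $\chi \circ \phi_0$ for a unique isogeny $\chi\colon E_0 \to E_1$. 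Since $\pi^\ast$ annihilates $\Sigma$, one has the elementary inclusion $\phi_0(\Sigma) \subseteq \ker\chi$, whence $\#\phi_0(\Sigma) \mid \deg\chi$. Thus it suffices to prove that $3 \mid \#\phi_0(\Sigma)$: this alone produces the desired isogeny $\chi\colon E_0 \to E_1$ with $3 \mid \deg\chi$.

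Next I would pin down the isogeny class and locate the multiplicative $3$-torsion. The model $y^2 + axy + y = x^3$ carries the rational point $(0,0)$ of order $3$, giving a cyclic $3$-isogeny $\phi\colon E \to E' := E/\langle (0,0)\rangle$; by Cartier duality $\ker\hat\phi \cong \mu_3$, so $E'$ carries a subgroup scheme of multiplicative type $\mu_3$ whereas $E$ carries the étale $\ZZ/3\ZZ$. Using the factorisation $\Delta = (a-3)(a^2+3a+9)$ together with $\gcd(a-3,a^2+3a+9)\mid 27$, I would show under the hypotheses — $a^2+3a+9$ a prime power and no prime factor of $a-3$ congruent to $1\pmod 6$ — that $\mathcal{C} = \{E,E'\}$, with $E'$ gaining no new rational $3$-torsion (so there is no $9$-isogeny) and no isogenies of other degree. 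Because the Shimura subgroup is of multiplicative type, the member of $\mathcal{C}$ that receives its $3$-part is the one carrying $\mu_3$, namely $E_0 = E'$, and the asserted map is then the degree-$3$ dual $\hat\phi\colon E' \to E = E_1$.

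The decisive point is to show that the $\mu_3$ in $E' = E_0$ really is the image $\phi_0(\Sigma[3])$ of the Shimura subgroup, i.e.\ that $3 \mid \#\phi_0(\Sigma)$. The rational $3$-torsion on $E$ forces a mod-$3$ Eisenstein congruence for the newform attached to $\mathcal{C}$, and $\Sigma$ is exactly the receptacle for Eisenstein congruences of multiplicative type; the plan is to realise this congruence as an inclusion $\mu_3 \hookrightarrow \Sigma$ and to check that it survives the projection $\phi_0$, i.e.\ that $\Sigma[3] \not\subseteq \ker\phi_0$. This is where the two arithmetic conditions enter: the condition that no prime factor of $a-3$ is $\equiv 1 \pmod 6$ guarantees that the congruence is genuinely of multiplicative (rather than split or trivial) type at the primes dividing $a-3$, while the prime-power condition on $a^2+3a+9$ keeps the conductor contribution of the second factor clean so that the $3$-part of $\Sigma$ injects into $E_0$. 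Concretely I would extract the $3$-part of $\Sigma$ from the explicit description of the Shimura subgroup of Ling and Oesterlé in terms of the cusps and the local behaviour at each $p \mid N$.

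The main obstacle is exactly this last survival statement: translating the rational $3$-torsion point of $E$ into the nonvanishing of $\phi_0(\Sigma)$, and verifying that the hypotheses on $a$ are precisely what keeps the $3$-part of the Shimura subgroup nontrivial and prevents it from dying in the passage to the optimal quotient $E_0$. By contrast, the factorisation of $\Delta$, the enumeration of $\mathcal{C}$, and the dictionary identifying the $\mu_3$-bearing curve as the $X_0(N)$-optimal one are routine once the Shimura-subgroup framework is in place.
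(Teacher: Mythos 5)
First, a point of reference: the paper does not prove this statement at all --- it is quoted verbatim from Byeon--Yhee \cite{BY} and used as a black box (only its contrapositive is invoked, in \S 9) --- so there is no in-paper proof to compare against. On its own merits, your opening reduction is sound: the factorisation $\phi_1\circ\pi^\ast=\chi\circ\phi_0$ through the connected kernel of the optimal quotient, the inclusion $\phi_0(\Sigma)\subseteq\ker\chi$, and the resulting implication $3\mid\#\phi_0(\Sigma)\Rightarrow 3\mid\deg\chi$ are all correct, and this Shimura-subgroup framework (with $\Sigma$ of multiplicative type) is indeed the Vatsal-style machinery on which \cite{BY} is built.

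There are two problems, one local and one fatal. The local one: your description of the isogeny class is wrong. Since $b=1$ is a cube, Hadano's theorem (Theorem 9.2 of this paper) shows that $E'=E/\langle(0,0)\rangle$ always acquires a \emph{new} rational point of order $3$ (necessarily generating a subgroup different from $\ker\hat\phi\cong\mu_3$, which has no nontrivial rational points), so the \'etale chain continues $E\to E'\to E''$ and $\calC$ contains at least three curves, not two; for instance $a=0$ satisfies both hypotheses and yields the class 27a with four curves. Consequently both $E'$ and $E''$ carry a $\mu_3$, and ``the curve carrying $\mu_3$'' does not single out $E_0$. This is not fatal only because your first-paragraph reduction never actually needs to identify $E_0$ and $E_1$ within the chain. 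The fatal problem is that the entire content of the theorem is the step you defer: proving that the two arithmetic hypotheses --- no prime factor of $a-3$ congruent to $1\bmod 6$, and $a^2+3a+9$ a prime power --- force $3\mid\#\phi_0(\Sigma)$. You correctly sense that this must be extracted from the Ling--Oesterl\'e description of $\Sigma$ and the local component groups at the primes dividing $a-3$ and at the prime underlying $a^2+3a+9$, but you give no argument connecting the congruence condition mod $6$ to the survival of the $\mu_3$ under $\phi_0$, and you explicitly label this ``the main obstacle.'' As written, the proposal is a reasonable strategy whose decisive step --- the only step where the hypotheses on $a$ are actually used --- is missing.
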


For any $E \in \mathcal{C}$, we let $E_{\ZZ}$ be the Néron model over $\ZZ$ and $\omega$ be a Néron differential on $E$. Let $\phi:E \to E'$ be an isogeny. We say that $\phi$ is \emph{étale} if the extension $E_{\ZZ} \to E'_{\ZZ}$ to Néron models is étale. If $\phi:E \to E'$ is an isogeny over $\QQ$, then we have $\phi^*(\omega')=n\omega$ for some non-zero integer $n=n_{\phi}$, where $\omega'$ is a  Néron differential on $E'$. The isogeny $\phi$ is étale if and only if $n_{\phi}=\pm 1$. If $\phi:E \to E$ is the multiplication by an integer $m$, then $\phi^*(\omega')=m\omega$. Thus if $\phi$ is any isogeny of degree $p$ for a prime number $p$, we must have $n_{\phi}=1$ or $n_{\phi}=p$. If $\phi'$ denotes the dual isogeny, then $\phi' \circ \phi=[p]$ is the multiplication by $p$ mapping. So precisely one of $\phi$ and $\phi'$ is étale.

In \cite{St}, Stevens proved that in every isogeny class $\mathcal{C}$ of elliptic curves defined over $\QQ$, there exists a unique curve $E_\mathrm{min} \in \mathcal{C}$ such that for every $E \in \mathcal{C}$, there is an étale isogeny $\phi: E_\mathrm{min} \to E$. The curve $E_\mathrm{min}$ is called the \emph{(étale) minimal curve} in $\mathcal{C}$. Stevens conjectured that $E_\mathrm{min}=E_1$ and recently Vatsal proved the following theorem.

\begin{theorem}[\cite{Va}, Theorem 1.10]\label{th:Vatsal}
Suppose that the isogeny class $\mathcal{C}$ consists of semi-stable curves. The étale isogeny $\phi: E_\mathrm{min} \to E_1$ has degree a power of two.
\end{theorem}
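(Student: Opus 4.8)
The plan is to show that $\deg(\phi)$ has no odd prime divisor, so that it is forced to be a power of $2$. First I would factor $\phi: E_\mathrm{min} \to E_1$ as a composition $\phi = \phi_k \circ \cdots \circ \phi_1$ of isogenies of prime degree through intermediate curves of $\mathcal{C}$. Since $n_\phi = \prod_i n_{\phi_i}$, since each $n_{\phi_i}$ lies in $\{1, \deg\phi_i\}$, and since $\phi$ is étale so that $|n_\phi| = 1$, every factor must satisfy $n_{\phi_i} = 1$ and is therefore itself étale. Thus an odd prime $\ell$ dividing $\deg(\phi)$ would produce an \emph{étale} isogeny of degree $\ell$ among the $\phi_i$, and it suffices to rule out any such isogeny in the chain joining the étale-minimal curve to the $X_1(N)$-optimal curve.

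Next I would translate the problem into a comparison of period lattices. Writing $E_\mathrm{min}(\CC) = \CC/\Lambda_\mathrm{min}$ and $E_1(\CC) = \CC/\Lambda_1$, the fact that $\phi$ is étale means $\phi^* \omega_1 = \pm\,\omega_\mathrm{min}$, hence $\Lambda_\mathrm{min} \subseteq \Lambda_1$ with index exactly $\deg(\phi)$. On the other hand, $X_1(N)$-optimality of $E_1$ (connectedness of the kernel of $J_1(N) \to E_1$) pins down $\Lambda_1$ in terms of the newform $f$ attached to $\mathcal{C}$: one has $\Lambda_1 = c_1 \cdot \Lambda_f$, where $\Lambda_f$ is the period lattice of $2\pi i f\,d\tau$ on $X_1(N)$ and $c_1$ is the $X_1(N)$-Manin constant. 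Moreover, since $E_\mathrm{min}$ is the source of étale isogenies to every member of $\mathcal{C}$, one gets $\Lambda_\mathrm{min} = \bigcap_{E \in \mathcal{C}} \Lambda_E$. Proving the theorem then amounts to showing that $\Lambda_\mathrm{min}$ and $\Lambda_1$ agree after tensoring with $\ZZ_\ell$ for every odd prime $\ell$.

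The heart of the argument, and the step I expect to be the main obstacle, is to control this odd part using semistability. I would analyze a hypothetical étale $\ell$-isogeny step $\psi$ (with $\ell$ odd) through its kernel $\Ker\psi$: at each prime of multiplicative reduction the Tate-curve uniformization classifies $\Ker\psi$ as étale, multiplicative, or mixed, and the semistability hypothesis makes this local analysis uniform across the class. Globally, the discrepancy between the $X_1(N)$-optimal and the étale-minimal curve is governed by subgroups of multiplicative type inside the modular Jacobian, concentrated in the Shimura subgroup $\Sigma = \Ker(J_0(N) \to J_1(N))$; the essential input is that $\Sigma$ is Eisenstein and of $\mu$-type, and that for a semistable isogeny class its odd part cannot separate $E_\mathrm{min}$ from $E_1$. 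Establishing precisely this — that any genuine odd-degree discrepancy is obstructed by the $\mu$-type, Eisenstein nature of $\Sigma$ — is the crux; once it is in hand, only the prime $2$ can contribute to $[\Lambda_1 : \Lambda_\mathrm{min}]$, and $\deg(\phi)$ is a power of $2$ as claimed.
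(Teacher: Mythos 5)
The paper does not prove this statement: it is imported verbatim from Vatsal's article (\cite{Va}, Theorem 1.10) and used as a black box, so there is no internal proof to compare against. Judged on its own, your proposal is a plausible reconstruction of the \emph{shape} of Vatsal's argument, but it contains a genuine gap: the entire content of the theorem is concentrated in the step you yourself label ``the crux,'' and that step is asserted rather than proved. Reducing to prime-degree \'etale factors via $n_\phi = \prod_i n_{\phi_i}$ is fine (though you should first observe that an \'etale isogeny is necessarily cyclic, since writing $\phi = [m]\circ\psi$ gives $n_\phi = m\cdot n_\psi$, forcing $m=1$; only then does the kernel admit a Galois-stable filtration by prime-index steps). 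The lattice reformulation $\Lambda_{\mathrm{min}} \subseteq \Lambda_1$ with index $\deg\phi$ is also standard, going back to Stevens. But the claim that ``any genuine odd-degree discrepancy is obstructed by the $\mu$-type, Eisenstein nature of $\Sigma$'' is not a lemma you can wave at: the Shimura subgroup $\Sigma = \Ker(J_0(N)\to J_1(N))$ directly controls the difference between the $X_0(N)$- and $X_1(N)$-optimal curves, whereas the theorem compares $E_1$ with the \'etale-\emph{minimal} curve, which a priori sits below every curve in the class, not just below $E_0$ and $E_1$. Bridging that mismatch is exactly where Vatsal's work lies: one must analyze, prime by prime of multiplicative reduction, the multiplicative-type subgroups of the N\'eron models across the whole isogeny class and show (using semistability, Mazur's Eisenstein ideal results, and the structure of component groups) that at odd primes $\ell$ the $\ell$-part of every kernel in the class is either wholly \'etale or wholly of multiplicative type in a way that forces $\Lambda_{\mathrm{min}}\otimes\ZZ_\ell = \Lambda_1\otimes\ZZ_\ell$. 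None of that analysis appears in your sketch.

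In short: your outline names the right ingredients (semistability, Tate uniformization of the kernel locally, Eisenstein/$\mu$-type subgroups of the modular Jacobian) but does not supply the argument that makes them work together, and it slightly misidentifies the controlling object ($\Sigma$ versus the multiplicative subgroups relevant to $E_{\mathrm{min}}$). As a proof it is incomplete; as a reading guide to \cite{Va} it is reasonable. For the purposes of this paper the correct move is simply to cite Vatsal, which is what the authors do.
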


\subsection{Cassels' theorem}

Let $F$ be a number field with absolute Galois group $G_F$, $E$ and $E'$ be elliptic curves defined over $F$, and $\phi: E \to E'$ be an isogeny defined over $F$ with dual isogeny $\phi': E' \to E$. For various places $v$ of $F$, let $F_v$ denote the completion of $F$ with respect to the place $v$. The sizes of the $\phi$-Selmer group and the $\phi'$-Selmer group are related by the following theorem of Cassels in \cite{Ca2}.

\begin{theorem}[\cite{Ca2} or \cite{KS}, Theorem 1]\label{th:Cassels}
Suppose $\phi$ is an isogeny from $E$ to $E'$ over $F$. Let $C_{\mathfrak{q}}$ and $C'_{\mathfrak{q}}$ be Tamagawa numbers of $E$ and $E'$ at a finite place $\mathfrak{q}$ of $F$, respectively. Then we have
\begin{equation}
\frac{\# \Sel^{\phi}(E/F)}{\# \Sel^{\phi'}(E'/F) }=
\frac{\# E(F)[\phi] \cdot \prod_{v} \int_{E'(F_{v})} \left|\omega' \right|_{v} \cdot \prod_{\mathfrak{q}} C'_{\mathfrak{q}}}
{\# E'(F)[\phi']  \cdot \prod_{v} \int_{E(F_{v})} \left|\omega \right|_{v} \cdot \prod_{\mathfrak{q}} C_{\mathfrak{q}}},
\end{equation}
where $v$ runs through the infinite places, and $\mathfrak{q}$ runs through the finite places.
\end{theorem}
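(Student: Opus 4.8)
The plan is to express the ratio of the two Selmer orders as a product of purely local factors by means of global (Poitou--Tate) duality, and then to evaluate each local factor by a Haar-measure computation. First I would record the two Kummer sequences: for each place $v$ of $F$ the connecting maps give injections $\delta_v : E'(F_v)/\phi E(F_v) \hookrightarrow H^1(F_v, E[\phi])$ and $\delta'_v : E(F_v)/\phi' E'(F_v) \hookrightarrow H^1(F_v, E'[\phi'])$, and by definition $\Sel^\phi(E/F)$ and $\Sel^{\phi'}(E'/F)$ are cut out inside $H^1(F,E[\phi])$ and $H^1(F,E'[\phi'])$ by the local conditions $\Ima\delta_v$ and $\Ima\delta'_v$ respectively. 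The Weil pairing identifies $E'[\phi']$ with the Cartier dual of $E[\phi]$, so local Tate duality furnishes a perfect pairing between $H^1(F_v,E[\phi])$ and $H^1(F_v,E'[\phi'])$. The first key step is to prove that under this pairing $\Ima\delta_v$ and $\Ima\delta'_v$ are exact orthogonal complements of one another; this is the standard compatibility of the Weil pairing with the cup product, and it is what makes $\Sel^\phi$ and $\Sel^{\phi'}$ a \emph{dual} pair of Selmer groups.

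Granting this self-duality, I would invoke the global duality formula for Selmer groups attached to a finite Galois module and its Cartier dual equipped with mutually dual local conditions (Wiles' formula, equivalently the Poitou--Tate nine-term exact sequence). It yields
\begin{equation*}
\frac{\#\Sel^\phi(E/F)}{\#\Sel^{\phi'}(E'/F)} = \frac{\#E(F)[\phi]}{\#E'(F)[\phi']}\cdot\prod_v \frac{\#\lp E'(F_v)/\phi E(F_v)\rp}{\#E(F_v)[\phi]},
\end{equation*}
the product running over all places of $F$. This already produces the global torsion quotient $\#E(F)[\phi]/\#E'(F)[\phi']$ and reduces everything to the local factors $n_v := \#\lp E'(F_v)/\phi E(F_v)\rp / \#E(F_v)[\phi]$.

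Next I would compute $n_v$ by a change-of-variables argument for the Haar measures $|\omega|_v$ and $|\omega'|_v$. Since $\phi^*\omega'$ and $\omega$ are both generators of the one-dimensional space of invariant differentials, they differ by a scalar, say $\phi^*\omega' = c_\phi\,\omega$ with $c_\phi\in F^\times$. Pushing the measure $|\phi^*\omega'|_v$ forward along the $\#E(F_v)[\phi]$-to-one local homomorphism $\phi : E(F_v) \to E'(F_v)$ (whose image is an open subgroup of index $\#(E'(F_v)/\phi E(F_v))$) gives
\begin{equation*}
n_v = \frac{1}{|c_\phi|_v}\cdot\frac{\int_{E'(F_v)}|\omega'|_v}{\int_{E(F_v)}|\omega|_v}.
\end{equation*}
Taking the product over all $v$, the factors $|c_\phi|_v$ disappear by the product formula $\prod_v|c_\phi|_v = 1$, so $\prod_v n_v = \lp\prod_v\int_{E'(F_v)}|\omega'|_v\rp / \lp\prod_v\int_{E(F_v)}|\omega|_v\rp$.

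Finally I would split this product of local volumes into its archimedean and non-archimedean parts. The archimedean parts are retained as the period integrals $\int_{E(F_v)}|\omega|_v$. At a finite place $\mathfrak{q}$ the standard Tate volume formula (cf. \cite{AEC}) gives $\int_{E(F_\mathfrak{q})}|\omega|_\mathfrak{q} = C_\mathfrak{q}\cdot \#\tilde E_{ns}(k_\mathfrak{q})/\#k_\mathfrak{q}$, and the factor $\#\tilde E_{ns}(k_\mathfrak{q})/\#k_\mathfrak{q}$ depends only on the trace of Frobenius, which is an isogeny invariant; hence these factors cancel between $E$ and $E'$ and the non-archimedean ratio collapses to $\prod_\mathfrak{q} C'_\mathfrak{q}/C_\mathfrak{q}$. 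Assembling the pieces gives exactly the asserted formula. The main obstacle is the very first step---establishing the exact orthogonality of the local conditions and correctly invoking global Poitou--Tate duality; once that framework is in place, the local measure computation, the product-formula cancellation, and the Tamagawa conversion are all routine. (Alternatively one may follow Cassels' original route through the exact sequences $0 \to E'(F)/\phi E(F) \to \Sel^\phi(E/F) \to \Sh(E/F)[\phi] \to 0$ together with the Cassels--Tate pairing to match the $\Sh$-contributions, but this is appreciably more laborious.)
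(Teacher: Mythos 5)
The paper offers no proof of this statement: it is imported verbatim from the literature (Cassels' \emph{Arithmetic on curves of genus 1, VIII}, in the reformulation of Kloosterman--Schaefer, Theorem 1), so there is no internal argument to compare yours against. Your outline is, in substance, the standard modern proof of that cited result (Schaefer's argument): the Greenberg--Wiles/Poitou--Tate formula applied to the Cartier-dual pair $E[\phi]$, $E'[\phi']$ with the mutually orthogonal local conditions $\Ima\delta_v$, $\Ima\delta'_v$, followed by the Haar-measure evaluation of $\#\lp E'(F_v)/\phi E(F_v)\rp/\#E(F_v)[\phi]$, the product-formula cancellation of $|c_\phi|_v$, and the conversion of non-archimedean volumes into Tamagawa numbers times the isogeny-invariant factor $\#\tilde E_{ns}(k_\mathfrak{q})/\#k_\mathfrak{q}$. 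All the intermediate claims you state are correct, and you correctly identify the orthogonality of the local images under the local Tate pairing as the non-routine input. The one point where your sketch is slightly glib is the last step: the identity $\int_{E(F_\mathfrak{q})}|\omega|_\mathfrak{q}=C_\mathfrak{q}\cdot\#\tilde E_{ns}(k_\mathfrak{q})/\#k_\mathfrak{q}$ holds when $\omega$ is a local N\'eron differential at $\mathfrak{q}$; for a general global invariant differential there is a discrepancy $|\omega/\omega^{0}_\mathfrak{q}|_\mathfrak{q}$ at each finite place, and since the theorem as stated retains only the archimedean integrals, you must either take $\omega,\omega'$ to be (locally) N\'eron differentials throughout, as the paper implicitly does, or carry the discrepancy factors and observe that they are absorbed by the same product-formula cancellation you already used for $c_\phi$. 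With that normalization made explicit, your plan is a faithful reconstruction of the proof in the cited sources; Cassels' original route (via the exact sequences linking $\Sel^\phi$, $\Sel^{\phi'}$ and $\Sh$, and the Cassels--Tate pairing), which you mention as an alternative, is indeed the more laborious of the two.
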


\section{$\torgp \simeq \ZZ/2\ZZ \oplus \ZZ/6\ZZ$}
\label{section:torgp_type_2_6}

In this section, we prove the Main Theorem for the cases when $\torgp$ is isomorphic to $\ZZ/2\ZZ \oplus \ZZ/6\ZZ$.

\begin{lemma}\label{lem:torsionpoints_generating_component_group}
Let $E$ be an elliptic curve defined over $\QQ$ given by $y^2+a_1xy+a_3y=x^3+a_2x^2+a_4x+a_6$ with $a_i \in \ZZ$, and $P$ be a torsion point of $E(\QQ)$ of a prime order $\ell$. Suppose that $E$ has bad reduction at $p$, having Weierstrass equation of the form $y^2+\overline{a_1} xy =x^3+\overline{a_2}{x}^2$ over $\FF_p$, where $\overline{a_i} = a_i \pmod{p}$. If the point $P$ goes to $(0,0)$ in the reduced curve, then $\ell$ divides $C_p$.
\end{lemma}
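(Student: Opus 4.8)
The plan is to realise $\ell$ as the order of a nontrivial element in the quotient group $E(\QQ_p)/E_0(\QQ_p)$, where $E_0(\QQ_p)$ denotes the subgroup of points reducing to a nonsingular point of the reduced curve. Recall (cf. \cite{advAEC}, \S IV) that the local Tamagawa number satisfies $C_p = [E(\QQ_p) : E_0(\QQ_p)]$. Thus it will suffice to produce an element of exact order $\ell$ in this quotient, after which Lagrange's theorem gives $\ell \mid C_p$.

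First I would confirm that the reduced curve $\tilde{E} : y^2 + \overline{a_1} xy = x^3 + \overline{a_2} x^2$ over $\FF_p$ is singular precisely at $(0,0)$. Writing $F(x,y) = y^2 + \overline{a_1} xy - x^3 - \overline{a_2} x^2$, one checks directly that $F(0,0) = 0$, $F_x(0,0) = 0$, and $F_y(0,0) = 0$, so $(0,0)$ is a singular point; this is consistent with the hypothesis that $E$ has bad reduction at $p$, and identifies $(0,0)$ as the singular point to which $P$ reduces.

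Next, since $P \in E(\QQ) \subset E(\QQ_p)$ reduces to the singular point $(0,0)$ of $\tilde{E}$, by definition $P \notin E_0(\QQ_p)$, so its image $\overline{P}$ in $E(\QQ_p)/E_0(\QQ_p)$ is nontrivial. On the other hand, $\ell \overline{P} = \overline{\ell P} = \overline{O} = 0$ because $P$ has order $\ell$, so the order of $\overline{P}$ divides $\ell$; as $\ell$ is prime and $\overline{P} \neq 0$, the order of $\overline{P}$ is exactly $\ell$. Hence $\ell$ divides $\#\bigl( E(\QQ_p)/E_0(\QQ_p) \bigr) = C_p$, which is the claim.

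I expect no serious obstacle: the argument is essentially the observation that a rational torsion point landing in a nontrivial component contributes its order to the component group. The only points requiring care are the identification $C_p = [E(\QQ_p):E_0(\QQ_p)]$ together with the characterisation of $E_0(\QQ_p)$ as the preimage of the smooth locus under reduction, and the remark that, because $P$ is $\QQ$-rational (hence $\QQ_p$-rational), the component it meets is automatically $\FF_p$-rational, so its image genuinely lies in the group of order $C_p$ rather than in the larger geometric component group; thus no Galois-invariance issue intervenes.
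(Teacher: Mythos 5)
Your argument is correct and is essentially identical to the paper's proof: both identify $C_p$ with the index $[E(\QQ_p):E_0(\QQ_p)]$, note that $P$ reduces to the singular point and hence represents a nontrivial class in the quotient, and conclude that this class has exact order $\ell$ since $\ell$ is prime. The extra verifications you include (that $(0,0)$ is the singular point, and that no Galois-invariance issue arises since $P$ is $\QQ$-rational) are sound and merely make explicit what the paper leaves implicit.
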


\begin{proof}
Let $E_0(\QQ_p)$ be the group of $\QQ_p$-rational points of $E$ which become non-singular points in the reduced curve modulo $p$. Since $P$ becomes singular, the class $P + E_0(\QQ_p) \in E(\QQ_p)/E_0(\QQ_p)$ is non-trivial. Since $[\ell] P = O$, the identity element in $E(\QQ)$, the order of the element $P + E_0(\QQ_p)$ is exactly $\ell$ in $E(\QQ_p)/E_0(\QQ_p)$. Thus $\ell \mid C_p = \left[ E(\QQ_p) : E_0(\QQ_p) \right]$.
\end{proof}

\begin{theorem}
Suppose that $\torgp$ is isomorphic to $\ZZ/2\ZZ \oplus \ZZ/6\ZZ$. Then the order $12 = \# \torgp$ divides the Tamagawa number $C$ of $E$.
\end{theorem}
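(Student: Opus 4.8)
The plan is to establish the two congruences $4 \mid C$ and $3 \mid C$ separately; since $\gcd(4,3)=1$ and $C = \prod_{p \mid N} C_p$ is an integer, together they give $12 \mid C$. The decomposition $\ZZ/2\ZZ \oplus \ZZ/6\ZZ \simeq E(\QQ)[2] \oplus \ZZ/3\ZZ$ shows that $E$ carries the full $2$-torsion $E[2] \subseteq E(\QQ)$ together with a rational point $T$ of order $3$, and each of these will account for one of the two congruences.

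For $4 \mid C$: because $E[2] \subseteq E(\QQ)$, the curve admits a Weierstrass model $y^2 = x(x+a)(x+b)$ exactly as in \S\ref{section:torgp_type_2_2}. The local computations of Propositions \ref{prop:prime_dividing_a_xor_b}, \ref{prop:prime_dividing_c}, and \ref{prop:commonprime_dividing_a_and_b}, together with the final reduction step behind Theorem \ref{th:torgp_type_2_2}, depend only on this model and not on the torsion being \emph{exactly} $\ZZ/2\ZZ \oplus \ZZ/2\ZZ$. Hence they apply verbatim and yield $4 \mid C$ with the only possible exceptions `17a2' and `32a2'. But both of these curves have $\torgp \simeq \ZZ/2\ZZ \oplus \ZZ/2\ZZ$ and carry no rational $3$-torsion, so they do not lie in the present family; therefore $4 \mid C$ without exception.

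For $3 \mid C$: here I would invoke Lemma \ref{lem:torsionpoints_generating_component_group} with $\ell = 3$ and the point $T$. The goal is to produce a prime $p$ of multiplicative reduction at which $T$ reduces to the node $(0,0)$ of the reduced equation; the Lemma then delivers $3 \mid C_p$, hence $3 \mid C$. To locate such a prime I would use the explicit one-parameter parametrisation of the family from Kubert \cite{Ku}, factor the discriminant, and isolate the factor cut out by the $3$-isogeny $E \to E/\langle T\rangle$: for a prime $p$ dividing that factor the reduction is of multiplicative type $\mathrm{I}_n$ and $T$ specialises to the singular point, so the hypotheses of the Lemma hold. One must additionally check the finitely many small primes (chiefly $p=2,3$) and parameter values, by hand or with Sage \cite{sagemath}, to confirm that the required multiplicative prime genuinely exists for every curve in the family.

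The main obstacle is the divisibility $3 \mid C$. Unlike the $2$-part, a rational $3$-torsion point alone does \emph{not} force $3 \mid C$ --- this is precisely the difficulty that makes the case $\torgp \simeq \ZZ/3\ZZ$ of \S\ref{section:torgp_type_3} require Cassels' and Manin-constant techniques. What rescues the present case is the extra rigidity imposed by the coexisting full $2$-torsion, which constrains the parameter and rules out additive or potentially-good reduction at the relevant prime, so that $T$ reliably lands on the node. Verifying this uniformly over the family, and confirming that no sporadic curve escapes with $3 \nmid C$, is the technical heart of the argument.
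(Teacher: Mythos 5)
Your decomposition into $4 \mid C$ and $3 \mid C$ is a legitimate alternative to the paper's strategy (the paper instead works prime by prime with the order-$6$ point $P=(0,0)$ of the Kubert model, extracting $6 \mid C_p$ at split multiplicative primes and a further factor of $2$ at a prime dividing $u$ when needed), but as written your argument has a genuine gap exactly where you locate ``the technical heart'': the divisibility $3 \mid C$ is never actually established. You correctly observe that a rational $3$-torsion point alone does not force $3 \mid C$ --- that is the whole difficulty of \S\ref{section:torgp_type_3} --- and you then assert that the coexisting full $2$-torsion supplies enough rigidity, but you give no argument for this. The paper's proof of this theorem consists essentially of nothing but that verification: it writes $u/v=(T-3S)(T+3S)/(2S(5S-T))$, factors $\Delta = v^6(v+u)^3u^2(9v+u)$ in terms of $S$ and $T$, and runs a case analysis on the parities of $S,T$ and on the degenerate configurations $S-T=\pm1$, $S=1$, $|5S-T|=2^A$, $|S-T|=2^B$, in each case exhibiting explicit primes dividing $v$ or $v+u$ at which $P=(0,0)$ reduces to the node of a split multiplicative fibre (giving $6 \mid C_p$ via Lemma \ref{lem:torsionpoints_generating_component_group} and Corollary 15.2.1 of \cite{AEC}), plus, when only one such prime exists, a prime $r \mid u$ with $r \neq 3$ at which $[3]P$ is singular, giving $2 \mid C_r$. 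Until you carry out an analogous analysis showing that the relevant factor of $\Delta$ always admits a usable prime divisor (and is never $\pm 1$, nor swallowed by powers of $2$ and $3$), the claim $3 \mid C$ is an announcement rather than a proof.

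A secondary, fixable issue: for $4 \mid C$ you import the exception list consisting of `17a2' and `32a2' from Theorem \ref{th:torgp_type_2_2}. The local propositions of \S\ref{section:torgp_type_2_2} do apply to any curve with a model $y^2=x(x+a)(x+b)$, but the finite list of exceptions there was compiled for curves whose torsion is exactly $\ZZ/2\ZZ\oplus\ZZ/2\ZZ$; enlarging the family to all curves with full rational $2$-torsion could in principle introduce new exceptional parameter values (compare `15a3', with torsion $\ZZ/2\ZZ\oplus\ZZ/4\ZZ$, for which the paper only asserts $C\cdot M=8$ rather than a clean divisibility of $C$). So the transfer is plausible but not ``verbatim''; it needs a short check that the finitely many failures of the case analysis in \S\ref{section:torgp_type_2_2} contain no curve with a rational point of order $3$.
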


\begin{proof}
From \cite{Ku}, Table 3, elliptic curves defined over $\QQ$ having torsion subgroup $\ZZ/2\ZZ \oplus \ZZ/6\ZZ$ are parametrized as follows:
\begin{equation}
y^2+ (u-v) xy - uv(v+u) y=x^3 - v(v+u)x^2,
\end{equation}
with $u,v \in \ZZ$, $\gcd(u,v)=1$ and $\displaystyle{\frac{u}{v}=\frac{(T-3S)(T+3S)}{2S(5S-T)}}$ for a pair of relatively prime integers $S,T \in \ZZ$, with $S > 0$. In the expression of $\dfrac{u}{v}$, the numerator $(T-3S)(T+3S)$ and the denominator $2S(5S-T)$ of the right hand side are relatively prime outside $2$. Originally Kubert used a parametrisation seemingly different from the current one, but with some routine computations, readers may see that they are in fact equivalent. The discriminant $\Delta$ of the above equation is given by
\begin{eqnarray*}
\Delta &=& v^6 (v+u)^3u^2(9v+u)\\
&=&2^6S^6(5S-T)^6 (S-T)^6 (T-3S)^2 (T+3S)^2 (9S-T)^2.
\end{eqnarray*}
Let $P=(0,0)$ be a torsion point of $E(\QQ)$ of order $6$. One can easily check that $\Delta$ is minimal at every prime $p \mid v$ because $p$ cannot divide $c_4=(u + 3 v)(u^3 + 9 u^2 v + 3 u v^2 + 3 v^3)$. Similarly, $\Delta$ is minimal at every prime $q \mid (v+u)$ and $r \mid u$ possibly except for $q=2$ and $r=3$.

Suppose either $S$ or $T$ is even. As $\gcd(S,T) = 1$, then the other should be odd. In this case $(T-3S)(T+3S)$ and $2S(5S-T)$ are relatively prime, and thus $u = (T-3S)(T+3S)$ and $v = 2S(5S-T)$. Suppose now that $S-T \neq \pm 1$. In this case there are two distinct primes $p \mid 2S(5S-T)$, (in fact, $p \mid v$) and $q \mid (S-T)$ (in fact, $q \mid (v+u)$ and $q$ is odd). Modulo these primes $p$ and $q$, the curve $E$ has split multiplicative reduction. By \cite{AEC}, Appendix C, Corollary 15.2.1, we have $6 \mid C_p$ and $6 \mid C_q$. So $36 \mid C$.

Now assume $S-T = \pm 1$. In this case we can find two distinct primes $p = 2$ and $q$ dividing $v = 2S(5S-T)$. Similar as above, modulo these primes $p=2$ and $q$, the curve $E$ has split multiplicative reduction. By \cite{AEC}, Appendix C, Corollary 15.2.1, we have $6 \mid C_p$ and $6 \mid C_q$. So $36 \mid C$.

Now we assume that both $S$ and $T$ are odd. If $S=1$, then with the condition $\Delta \neq 0$, there is an odd prime $p \mid (5-T)(1-T)$ (in fact, $p \mid v(v+u)$) and a prime $r \mid (T-3)(T+3)$ (in fact, $r \mid u$ and $r \neq 3$, $p$). Since $E$ has split multiplicative reduction modulo $p$, by \cite{AEC} Appendix C, Corollary 15.2.1, we have $6 \mid C_p$. Since $E$ has bad reduction modulo $r$, where the reduced equation is given by the following form $y^2+\overline{a_1}xy=x^3+\overline{a_2}x^2$ modulo $r$, by applying Lemma \ref{lem:torsionpoints_generating_component_group} to the point $[3]P=(uv, uv^2)$ of order 2, we have $2 \mid C_r$. So $12 \mid C$.

If $S \neq 1$, then there is an odd prime $p \mid S$ (in fact, $p \mid v$) at which $E$ has split multiplicative reduction. By \cite{AEC}, Appendix C, Corollary 15.2.1, we have $6 \mid C_p$. When $(5S-T)(S-T)$ has an odd prime factor $q$ (in fact, $q|v(v+u)$ and  $q \neq p$), then $E$ has split multiplicative reduction modulo $q$. Similarly, we have $6 \mid C_q$. So $36 \mid C$.

Suppose that $|5S-T|=2^A$ and $|S-T|=2^B$. From the condition that
$S$ is odd and $S \neq 1$, one can find that either $A=2$ or $B=2$ (and not both). If $A=2$, we have $T=5S\pm 4$. Substituting this into $(T-3S)(T+3S)$, we can find an odd prime $r \mid (S\pm 2)(2S\pm 1)$ (in fact, $r \mid u$ and $r\neq p$) at which $E$ has bad reduction $y^2+\overline{a_1}xy=x^3+\overline{a_2}x^2$. Note that if $(S\pm 2)(2S\pm 1)$ is a power of $3$, then we must have $S = 1, 2$ or $5$. As the cases $S = 1$ or $2$ are dealt in the above paragraphs, we can choose $r \neq 3$ if $S\neq 5$. Moreover, if $S=5$ and $T=5S-4$, then $\ord_3 \Delta < 12$, so $\Delta$ is also minimal at $r=3$. By applying Lemma \ref{lem:torsionpoints_generating_component_group} for the point $[3]P=(uv, uv^2)$ of order 2, we have $2 \mid C_r$. So $12 \mid C$. If $B=2$, we have $T=S\pm 4$. There is a prime $r \mid (S \mp 2)(S \pm 1)$ (in fact, $r \mid u$  and $r \nmid 3p$) at which $E$ has bad reduction with reduced equation $y^2+\overline{a_1}xy=x^3+\overline{a_2}x^2$. By applying Lemma \ref{lem:torsionpoints_generating_component_group} for the
point $[3]P=(uv, uv^2)$ of order 2, we have $2 \mid C_r$. So $12 \mid C$.
\end{proof}


\section{$\torgp \simeq \ZZ/3\ZZ$}
\label{section:torgp_type_3}

In this section, we prove the Main Theorem for the cases when $\torgp$ is isomorphic to $\ZZ/3\ZZ$. More precisely, we show the following theorem.

\begin{theorem}\label{th:torgp_type_3}
Suppose that $\torgp$ is isomorphic to $\ZZ/3\ZZ$. Then the order $3 = \# \torgp$ divides $u_k \cdot C \cdot M \cdot \lp \# \Sh(E/K) \rp^{1/2}$.
\end{theorem}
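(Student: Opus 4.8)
The plan is to follow the same three-stage strategy used for the $2$-power torsion cases: parametrise, reduce via Tamagawa numbers, and treat the residual exceptional family with a finer invariant. First I would put $E$ in Kubert's normal form $y^2 + a_1 xy + a_3 y = x^3$, so that the rational point $P = (0,0)$ has order $3$ and the discriminant factors as $\Delta = a_3^3(a_1^3 - 27 a_3)$. As in \S\ref{section:torgp_type_2_4} and \S\ref{section:torgp_type_2_6}, I would run Tate's algorithm at the bad primes; the decisive tool is Lemma \ref{lem:torsionpoints_generating_component_group}, which gives $3 \mid C_p$ at any prime $p$ of split multiplicative reduction at which $P$ reduces to the node. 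Running through the primes dividing $a_3$ and $a_1^3 - 27 a_3$ shows $3 \mid C$ for all but a thin set of curves, which after the usual integral normalisation collapse into the one-parameter family $y^2 + axy + y = x^3$ with $\Delta = (a-3)(a^2 + 3a + 9)$ subject to $a^2 + 3a + 9$ being a prime power --- precisely the family appearing in the Byeon--Yhee theorem (Theorem \ref{th:Byeon--Yhee}).

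For the exceptional family I would first dispose of $K = \QQ(\sqrt{-3})$, where $u_K = 3$ makes the divisibility automatic. For the remaining fields I would exploit the two finer invariants flagged in the introduction, in parallel. The first route is Cassels' formula. Let $\phi \colon E \to E' = E/\langle P\rangle$ be the quotient $3$-isogeny with dual $\phi'$, and apply Theorem \ref{th:Cassels} over $F = K$. Because every bad prime splits in $K$ by the Heegner hypothesis, the local Tamagawa quotient $\prod_{\mathfrak{q}} C'_{\mathfrak{q}}/\prod_{\mathfrak{q}} C_{\mathfrak{q}}$ and the ratio of local periods contribute a controlled power of $3$ to $\#\Sel^{\phi}(E/K)/\#\Sel^{\phi'}(E'/K)$. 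Feeding this into the descent exact sequence $0 \to E'(K)/\phi E(K) \to \Sel^\phi(E/K) \to \Sh(E/K)[\phi] \to 0$ and its $\phi'$-analogue, and using that $\rk E(K) = 1$ with $E(K)[3] \supseteq \ZZ/3\ZZ$, forces $\dim_{\FF_3}\Sh(E/K)[3] \ge 1$; since the $3$-primary part of $\Sh(E/K)$ has square order, this already yields $3 \mid \lp \# \Sh(E/K) \rp^{1/2}$, exactly as in the $2$-torsion argument of \S\ref{subsection:Kramer}.

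The second route handles curves on which the Cassels computation is inconclusive, by pinning down the Manin constant. Here Theorem \ref{th:Byeon--Yhee} applies: for these curves the $X_0(N)$- and $X_1(N)$-optimal curves differ by a $3$-isogeny. Since $\pi^*\omega = M \cdot 2\pi i f(\tau)\,d\tau$ and an isogeny respecting the two parametrisations multiplies the Néron differential by its non-étale factor, the non-$X_0$-optimal curve in the pair carries a factor of $3$ in its Manin constant; Vatsal's Theorem \ref{th:Vatsal} ensures that in the semi-stable class this $3$-isogeny is the unique non-$2$-power isogeny, so the factor cannot migrate elsewhere. If this non-optimal curve is $E$ itself I conclude $3 \mid M$ directly; otherwise it is the $3$-isogenous partner, and I would transfer the conjecture back to $E$ through the isogeny invariance of Proposition \ref{prop:isoginv}(c), after checking the matching of torsion orders that its hypotheses require.

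The main obstacle is precisely this exceptional family. The delicate points are: verifying that the Cassels computation over $K$ genuinely produces a surplus factor of $3$ --- tracking the period ratio and the split-prime Tamagawa factors carefully, rather than over $\QQ$ --- and correctly attributing the factor of $3$ in the Manin constant to $E$ versus its $3$-isogenous partner, which is where Vatsal's theorem and an explicit reading of which curve is $X_0(N)$-optimal become essential. Everything outside this family is routine Tate's-algorithm bookkeeping.
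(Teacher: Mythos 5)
Your toolkit is the right one---Tate's algorithm with Lemma \ref{lem:torsionpoints_generating_component_group}, Cassels' formula, and the Byeon--Yhee/Vatsal optimality results---but the way you partition the curves among these tools contains a genuine error. The Tamagawa-number reduction for $E$ itself only gets you to $b=1$ in the normal form $y^2+axy+by=x^3$: at a prime $p\mid b$ the point $P=(0,0)$ reduces to the node and Lemma \ref{lem:torsionpoints_generating_component_group} gives $3\mid C_p$, but at a prime $p\mid(a^3-27b)$ with $p\nmid b$ the point $(0,0)$ stays nonsingular (the partial derivative $2y+axy+b$ equals $b\not\equiv 0$ there), so $C_p$ is just $\ord_p\Delta$, $1$ or $2$, and need not be divisible by $3$. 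Hence the residual set is the \emph{entire} one-parameter family $y^2+axy+y=x^3$, not merely those $a$ with $a^2+3a+9$ a prime power: that prime-power condition is invisible in $C_E$. It governs instead the Tamagawa numbers of the quotient curve $E'=E/\langle P\rangle$, whose Hadano equation $y^2+(a+6)xy+(a^2+3a+9)y=x^3$ (Theorem \ref{th:Hadano}) is what places the primes dividing $a^2+3a+9$ where Lemma \ref{lem:torsionpoints_generating_component_group} applies---and $\prod_\frakq C'_\frakq$ is exactly what Cassels' formula feeds on. Consequently your two routes are aimed at the wrong subfamilies: on the Byeon--Yhee family the Cassels computation is \emph{always} inconclusive (a single split prime gives only $\ord_3\prod_\frakq C'_\frakq\ge 2$, hence $\dim_{\FF_3}\Sel^{\phi}(E/K)\ge 2$, which is entirely absorbed by $E(K)/3E(K)\simeq(\ZZ/3\ZZ)^2$), whereas off that family (but still $b=1$) the Cassels route is the only thing that works---and your plan asserts, falsely, that $3\mid C$ there.

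The repaired architecture, which is the paper's, runs the implication in the opposite direction. For $b=1$, Hadano's theorem gives an étale chain $E\to E'\to E''$ of $3$-isogenies; Kenku's bound plus Stevens/Vatsal force $E=E_{\mathrm{min}}=E_1$ (after disposing of the length-four chain `27a'); and if $E\neq E_0$ the dual of the étale Shimura covering contributes the factor $3$ to $M_E$ directly---so your fallback through Proposition \ref{prop:isoginv}(c) for the case ``$E$ is the $X_0(N)$-optimal member of the pair'' never arises, since $E$ is always the $X_1(N)$-optimal one. If instead $E=E_0=E_1$, the \emph{contrapositive} of Theorem \ref{th:Byeon--Yhee} places $E$ off the exceptional family: either $a^2+3a+9$ has two distinct prime divisors, or some $p\mid(a-3)$ satisfies $p\equiv 1\pmod 3$. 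Only then is Theorem \ref{th:Cassels} invoked, with two contributing primes each counted twice because they split in $K$, yielding $\ord_3\prod_\frakq C'_\frakq\ge 4$, hence $\dim_{\FF_3}\Sel^{\phi}(E/K)\ge 4$, $\dim_{\FF_3}\Sel^{3}(E/K)\ge 4$, and finally $\dim_{\FF_3}\Sh(E/K)[3]\ge 2$. Your sketch cannot be completed as written without this re-routing and without the quotient curve entering the Cassels computation explicitly.
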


\subsection{Tamagawa numbers}

Let $E$ be an elliptic curve defined over $\QQ$ with a rational torsion point of order 3. We can take a Weierstrass equation for $E$ of the following form:\begin{equation}
y^2+axy+by=x^3,
\end{equation}
with $a,b \in \ZZ$, $b >0$, and such that there is no prime number $q$ satisfying both $q \mid a$ and $q^3 \mid b$. The discriminant of $\Delta$ of $E$ is given by $\Delta={b}^3({a}^3-27b)$, which is the minimal discriminant $\Delta_\mathrm{min}$ for $E$. Let $T=\{P:=(0,0), (0,-b), O \}$ be the rational torsion subgroup of order 3.


Suppose first that $b \neq 1$. Then there is a prime $p \mid b$, and Lemma \ref{lem:torsionpoints_generating_component_group} or Tate's algorithm shows $3 \mid C_p$. So we assume $b = 1$ in the sequel.

\subsection{Manin constants}

We introduce an useful theorem by T. Hadano.

\begin{theorem}[\cite{Ha}, Theorem 1.1]\label{th:Hadano}
The quotient curve $E':=E/T$ has a rational point of order 3 if and only if $b$ is a cube $t^3$ with $t>0$. Moreover the curve $E'$ is given by the equation
\begin{equation}
y^2+(a+6t)xy+(a^2+3at+9t^2)ty=x^3
\end{equation}
with discriminant $\Delta' = t^3(a^2+3at+9t^2)^3(a-3t)^3$.
\end{theorem}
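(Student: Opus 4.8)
The plan is to realize $E'$ as an explicit quotient and then read off precisely when it acquires a rational point of order $3$. First I would apply Vélu's formulae to the cyclic subgroup $T = \langle (0,0) \rangle$ of $E \colon y^2 + axy + by = x^3$ to produce a Weierstrass model for the quotient isogeny $\phi \colon E \to E' = E/T$. A direct computation gives
\begin{equation*}
E' \colon y^2 + axy + by = x^3 - 5ab\,x - \lp a^3 b + 7 b^2 \rp,
\end{equation*}
together with the dual $3$-isogeny $\phi' \colon E' \to E$, whose kernel is the rational cyclic subgroup $\phi(E[3]) \cong E[3]/T$. Thus $E'$ always carries a rational $3$-\emph{isogeny}, and the content of the theorem is to decide when it carries a rational $3$-torsion \emph{point}.

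Second, I would compute the $3$-division polynomial $\psi_3'$ of this model and factor it as (linear)$\times$(cubic) in $x$. The linear factor is the common $x$-coordinate of the two nonzero points of $\ker\phi'$; that subgroup is Galois-stable but, carrying the mod-$3$ cyclotomic character, contributes no rational point. The remaining three order-$3$ subgroups have $x$-coordinates among the roots of the cubic factor, and a rational root there yields a genuine rational $3$-torsion point, its $y$-coordinate being forced rational by the trivial Galois action on $\phi'(E'[3]) = T$. Conceptually, $E'[3]$ sits in an extension of Galois modules
\begin{equation*}
0 \longrightarrow \mu_3 \longrightarrow E'[3] \longrightarrow \ZZ/3\ZZ \longrightarrow 0,
\end{equation*}
whose class lies in $H^1(\QQ,\mu_3) \cong \QQ^\times / \QQ^{\times 3}$ and equals the class of $b$; the extension splits, equivalently $E'$ gains a rational point of order $3$, exactly when $b$ is a perfect cube $t^3$. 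The transparent special case $a=0$, where $E'$ reduces to $Y^2 = x^3 - \tfrac{27}{4}b^2$ and $\psi_3'$ to $3x\lp x^3 - 27 b^2 \rp$, already displays the mechanism: the root $x=0$ gives the cyclotomic subgroup with irrational $y$, while a rational point comes from $x^3 = 27 b^2$, which is solvable over $\QQ$ iff $b$ is a cube.

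Third, granting $b = t^3$, I would exhibit the rational $3$-torsion point $P'$ on the Vélu model---its $x$-coordinate being the rational root of the cubic factor---and move $P'$ to the origin as a flex by a change of variables of type $[1,\, t(a+3t),\, 3t,\, w]$ for a suitable $w$. This clears the $x$-, $x^2$-, and constant terms and puts $E'$ in the Kubert form $y^2 + a'xy + b'y = x^3$ with $a' = a+6t$ and $b' = \lp a^2+3at+9t^2 \rp t$, as asserted. The discriminant formula is then immediate from the identity $\Delta = b^3(a^3 - 27b)$ recorded for this family, namely
\begin{equation*}
\Delta' = (b')^3 \lp (a')^3 - 27 b' \rp = t^3 \lp a^2+3at+9t^2 \rp^3 (a-3t)^3,
\end{equation*}
since $(a+6t)^3 - 27\lp a^2+3at+9t^2 \rp t = (a-3t)^3$.

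The main obstacle is the \emph{only if} direction: showing that a rational point of order $3$ on $E'$ forces $b$ to be a cube. This is exactly the claim that the cubic factor of $\psi_3'$ has a rational root iff $b \in \QQ^{\times 3}$---equivalently that the extension class above is the class of $b$ modulo cubes---and pinning this down for general $a$, rather than only for the clean case $a=0$, is where the real work lies. Minor care is also needed to keep the change of variables integral and to confirm that the rational $x$-coordinate genuinely lifts to a rational point rather than to a conjugate pair.
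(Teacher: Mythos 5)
The paper does not prove this statement at all: it is imported verbatim as Theorem~1.1 of Hadano's paper \cite{Ha}, so there is no internal argument to compare yours against. Judged on its own terms, your computational scaffolding is correct — the Vélu model $y^2+axy+by=x^3-5abx-(a^3b+7b^2)$ checks out, the two nonzero points of $\ker\phi'$ do share one rational $x$-coordinate so that $\psi_3'$ factors as (linear)$\times$(cubic) over $\QQ$, your argument that a rational root of the cubic forces a rational $y$ (via $\phi'(P)\in T$ being rational and nonzero) is sound, and the identity $(a+6t)^3-27t(a^2+3at+9t^2)=(a-3t)^3$ together with $\Delta=b^3(a^3-27b)$ for Kubert forms gives the stated discriminant.

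The genuine gap is the one you flag yourself: the entire content of the ``if and only if'' is the claim that the class of the extension $0\to\mu_3\to E'[3]\to\ZZ/3\ZZ\to 0$ in $H^1(\QQ,\mu_3)\cong\QQ^\times/\QQ^{\times 3}$ is $[b]$ — equivalently, that the cubic factor of $\psi_3'$ acquires a rational root precisely when $b\in\QQ^{\times 3}$. You verify this only in the special case $a=0$ and then assert it in general; but a priori the class could be any expression in $a$ and $b$ (e.g.\ a class depending on $a^3-27b$ as well), and nothing in your argument rules that out. Until that class is actually computed — say by exhibiting the cubic factor of $\psi_3'$ explicitly and showing its splitting field is $\QQ(b^{1/3})$, or by an explicit cocycle computation on $E'[3]$ — neither direction of the equivalence is established, so the proposal is a correct plan rather than a proof. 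Secondarily, the ``if'' direction also leaves the normalizing change of variables $[1,\,t(a+3t),\,3t,\,w]$ unverified (``for a suitable $w$''); this is a finite computation, but as written the Kubert form $y^2+(a+6t)xy+(a^2+3at+9t^2)ty=x^3$ is only asserted, not derived.
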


Let $E'$ be the curve $E/T$. Since $b=1=1^3$, Theorem 9.2 says that $E'$ also has a rational point of order 3. Thus we have a `chain' $E \to E' \to E''$ of elliptic curves and isogenies of degree 3. Each isogeny in the above chain is étale because its kernel is isomorphic to $\ZZ/3\ZZ$ (the group scheme) since each kernel of the isogenies $E \to E'$ and $E' \to E''$ consists of $\QQ$-rational points of order $3$.

It follows from \cite{Ke}, Proof of Theorem 2, such a chain in the isogeny class of $E$ must have length at most $4$. However, we can readily check that if there is a chain of exact length $4$, then it must be identical to the chain $\text{`27a4'} \to \text{`27a3'} \to \text{`27a1'} \to \text{`27a2'}$, and in this case we can check that $3 \mid M$.

Denote by $\calC$ the rational isogeny class of the curve $E$, and let $E_\mathrm{min}$, $E_0$, and $E_1$ be the étale minimal curve, the $X_0(N)$-optimal curve, and the $X_1(N)$-optimal curve in the isogeny class $\calC$, respectively (cf. see subsection \ref{subsection:optimalcurves}).
Ignoring the above case, we have $E = E_\mathrm{min}$, the étale minimal curve in $\calC$, up to isogeny of degree prime to $3$. Thus, by Vatsal's theorem \ref{th:Vatsal}, we have $E = E_\mathrm{min} = E_1$. Since we have a canonical étale isogeny $E_1 \to E_0$ called the Shimura covering (cf. Remark 1.8 in \cite{Va}) having degree divisible by 3, if $E \neq E_0$ then we have $3 \mid M$. So we assume here and thereafter that $E = E_\mathrm{min} = E_1 = E_0$.
Thus, by (the contrapositive statement of) Theorem \ref{th:Byeon--Yhee}, either there are at least two distinct primes dividing $a^2+3a+9$ or there is a prime $p$ such that $p \mid (a-3)$ and $p \equiv 1 \pmod{3}$.

\subsection{$\lp \# \Sh(E/K) \rp^{1/2}$}

Now Theorem \ref{th:torgp_type_3} is reduced to the following proposition.


\begin{proposition}
Let $E$ be an elliptic curve of conductor $N$ defined by a minimal Weierstrass equation $y^2+ a xy+y=x^3$ with $a \in \ZZ$. Let $K$ be an imaginary quadratic field satisfying the Heegner hypothesis with $\disc(K) \neq -3$, i.e., $u_K \neq 3$ such that $E(K)$ has rank 1. Suppose that either (i) there are at least two distinct primes dividing $a^2+3a+9$; or (ii) there is a prime $p$ such that $p \mid ( a-3) $ and $p \equiv 1 \pmod{3}$. Then $3$ divides $C \cdot \lp \# \Sh(E/K) \rp^{1/2}$.
\end{proposition}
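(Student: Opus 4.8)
The plan is to exploit the rational $3$-isogeny attached to the order-$3$ torsion point, together with Cassels' formula and the splitting of bad primes. Write $T=\{(0,0),(0,-1),O\}$ for the rational subgroup of order $3$, let $\phi\colon E\to E':=E/T$ be the quotient isogeny and $\phi'\colon E'\to E$ its dual, so that $\phi'\circ\phi=[3]$. Since the odd part of the Cassels--Tate pairing is alternating and $\Sh(E/K)$ is finite, $\Sh(E/K)[3^\infty]$ has square order; hence it suffices to prove that $3\mid C$ or $\Sh(E/K)[3]\neq 0$, and because $\Sh(E/K)[\phi]\subseteq\Sh(E/K)[3]$ (as $\phi'\phi=[3]$), it is enough to produce a nonzero class in $\Sh(E/K)[\phi]$ whenever $3\nmid C$. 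I would therefore assume $3\nmid C$ throughout and use the descent exact sequence
\begin{equation*}
0\longrightarrow E'(K)/\phi E(K)\longrightarrow \Sel^\phi(E/K)\longrightarrow \Sh(E/K)[\phi]\longrightarrow 0,
\end{equation*}
so that $\Sh(E/K)[\phi]\neq 0$ exactly when $\#\Sel^\phi(E/K)>\#\lp E'(K)/\phi E(K)\rp$.

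Next I would apply Cassels' theorem (Theorem \ref{th:Cassels}) over $K$ to the pair $\phi,\phi'$ and read off $\#\Sel^\phi(E/K)/\#\Sel^{\phi'}(E'/K)$ as a product of three local contributions. The torsion factor is $\#E(K)[\phi]/\#E'(K)[\phi']=3$: indeed $\ker\phi=T$ is rational of order $3$, whereas $\ker\phi'\cong\mu_3$ by the Weil pairing, and $\mu_3\not\subset K$ because $\disc(K)\neq -3$, so $E'(K)[\phi']$ is trivial. The archimedean factor is $3^{-1}$: since $E=E_\mathrm{min}=E_1$ (established above), $\phi$ is étale, $\phi^*\omega'=\omega$, and the single complex place contributes $\int_{E'(\CC)}|\omega'|\,/\int_{E(\CC)}|\omega|=(\deg\phi)^{-1}=3^{-1}$. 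The remaining factor is $\prod_\mathfrak{q}C'_\mathfrak{q}/C_\mathfrak{q}$, and by the Heegner hypothesis every bad prime $p\mid N$ splits in $K$, so this equals $\lp\prod_{p\mid N}C'_p/C_p\rp^{2}$, an even power of $3$.

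The heart of the matter is the local Tamagawa analysis. By Hadano's theorem (Theorem \ref{th:Hadano}) with $t=1$, the curve $E'$ has discriminant $\Delta'=(a^2+3a+9)^3(a-3)^3$, while $\Delta_\mathrm{min}=(a-3)(a^2+3a+9)$. Thus at a prime $p$ where $E$ has multiplicative reduction $\mathrm{I}_m$ the curve $E'$ has reduction $\mathrm{I}_{3m}$, and when that reduction is \emph{split} one gets $C_p=m$ and $C'_p=3m$, contributing a factor $3$ to $\prod_{p\mid N}C'_p/C_p$ (the standing assumption $3\nmid C$ forces $3\nmid m=C_p$, so the factor is genuine). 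Every prime $p\neq 3$ dividing $a^2+3a+9$ is $\equiv 1\pmod 3$, since $x^2+3x+9$ has discriminant $-27$ and so has a root modulo $p$ only when $-3$ is a square; the condition $p\equiv 1\pmod 3$ in hypothesis (ii), respectively the presence of two such primes in hypothesis (i), is exactly what I would use to guarantee split multiplicative reduction and hence the requisite factors of $3$. Feeding this back through Cassels' identity together with the Mordell--Weil index $\#\lp E'(K)/\phi E(K)\rp$ --- whose value I would pin down using $\rk E(K)=1$, $E(K)[3]\cong\ZZ/3$ (valid since $\disc(K)\neq -3$ and $E(K)_\mathrm{tors}=E(\QQ)_\mathrm{tors}$ for all but finitely many $K$, cf.\ the remark after Proposition \ref{prop:isoginv}), and $\phi'\phi=[3]$ --- should force $\#\Sel^\phi(E/K)>\#\lp E'(K)/\phi E(K)\rp$, i.e.\ $\Sh(E/K)[\phi]\neq 0$.

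The main obstacle is precisely this final bookkeeping of $3$-adic valuations. Two points need care. First, one must establish that the multiplicative reduction is genuinely \emph{split} at the primes singled out by (i) and (ii) --- this is where $p\equiv 1\pmod 3$ is indispensable, a non-split prime contributing nothing --- and must separate the factors of $3$ that already sit inside $C$ from those that are forced onto the Selmer side. Second, one must control $\ord_3\#\lp E'(K)/\phi E(K)\rp$, equivalently the scalar by which $\phi$ acts on the rank-one free part of the Mordell--Weil group, since the factor $3$ of $\deg\phi$ may a priori sit on $\phi$ or on $\phi'$. I expect the two hypotheses to play genuinely different roles here: (i) supplies two split primes, hence a surplus $3^2$ that survives however the Mordell--Weil index distributes, whereas (ii) supplies only a single factor and must be combined with the optimality $E=E_0$ to exclude the unfavourable distribution.
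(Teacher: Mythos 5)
Your overall strategy is the paper's: apply Cassels' theorem to the rational $3$-isogeny $\phi\colon E\to E'=E/T$, use $\#E(K)[\phi]/\#E'(K)[\phi']=3$ and the archimedean factor $1$ or $3^{-1}$, and exploit the Heegner hypothesis to square the Tamagawa contribution $\prod_{p\mid N}C'_p/C_p$. But the proposal has a concrete gap in case (ii). You assert that hypothesis (ii) ``supplies only a single factor'' of $3$ and must therefore be rescued by the optimality $E=E_0$ and a delicate analysis of how $\deg\phi=3$ distributes between $\phi$ and $\phi'$ on the free part of the Mordell--Weil group. That rescue is neither carried out nor needed: since $a^2+3a+9\ge 7$ for every integer $a$, it always has a prime divisor $q$, and $q\neq p$ because $\gcd(a-3,a^2+3a+9)$ divides $27$ while $p\equiv 1\pmod 3$. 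By Hadano's theorem $E'$ carries a rational point of order $3$ and $\Delta'=(a-3)^3(a^2+3a+9)^3$, so Lemma \ref{lem:torsionpoints_generating_component_group} applied to $E'$ at $q$ gives $3\mid C'_q$, while the prime $p\mid(a-3)$ with $p\equiv 1\pmod 3$ gives split multiplicative reduction of $E'$ at $p$ with $3\mid\ord_p\Delta'$, hence $3\mid C'_p$. Thus in \emph{both} cases two rational primes contribute, each doubled by splitting in $K$, and $\ord_3\bigl(\prod_\frakq C'_\frakq\bigr)\ge 4$.

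The second weak point is the final bookkeeping, which you leave at ``should force.'' Your target inequality $\#\Sel^\phi(E/K)>\#\bigl(E'(K)/\phi E(K)\bigr)$ is awkward because $\#\bigl(E'(K)/\phi E(K)\bigr)$ can a priori be as large as $9$ (it divides $\#\bigl(E(K)/3E(K)\bigr)=9$ since $E'(K)[\phi']=0$), and controlling it is exactly the Mordell--Weil index problem you flag. The paper sidesteps this entirely: from $E'(K)[\phi']=0$ the map $H^1(K,E[\phi])\to H^1(K,E[3])$ is injective, so $\dim_{\FF_3}\Sel^3(E/K)\ge\dim_{\FF_3}\Sel^\phi(E/K)\ge 4$; combined with $\dim_{\FF_3}E(K)/3E(K)=2$ (rank $1$ plus the rational $3$-torsion, using $u_K\neq 3$) the $3$-descent sequence yields $\dim_{\FF_3}\Sh(E/K)[3]\ge 2$ directly. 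If you want to keep your $\Sh[\phi]$ formulation you must actually bound $\ord_3\#\bigl(E'(K)/\phi E(K)\bigr)$; as written, the argument does not close.
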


\begin{proof}
Let $\phi$ be an isogeny defined over $\QQ$ of degree 3 from E to the quotient curve $E'$ of E by the torsion subgroup $T=\{P, 2P, O\}$ and $\phi': E' \to E$ be its dual isogeny. Since $\# E(K)[\phi] =3$ and $\# E(K)[\phi']  = 1$, we have $\dfrac{\# E(K)[\phi] }{\# E'(K)[\phi']}=3$. Since $K$ is an imaginary quadratic field, by Theorem 1.2 in \cite{DD}, we have
\begin{equation*}
\prod_{\nu |
\infty}\frac{\int_{E'(K_{\nu})}|\omega'|_{\nu}}{\int_{E(K_{\nu})}|\omega|_{\nu}}
=\frac{\int_{E'(\CC)}|\omega'|}{\int_{E(\CC)}|\omega|}=
3^{-1}|\phi^*(\omega')/\omega| =1 \text{ or } 3^{-1}.
\end{equation*}

Assume that $3 \nmid C$, whence $3 \nmid \prod_\frakq C_\frakq$. By Theorem \ref{th:Cassels}, we have
\begin{equation}\label{eq:lowerbound_of_Sel_pi}
\dim_{\FF_3} \Sel^{\phi}(E/K) \ge \ord_3 \left( \prod_\frakq C'_\frakq \right).
\end{equation}

Suppose that there are at least two distinct primes dividing $a^2+3a+9$. Then at least one of them, say $p$, is not $3$. By Hadano's theorem \ref{th:Hadano} and Lemma \ref{lem:torsionpoints_generating_component_group}, we have $3 \mid C'_{p}=C'_{\mathfrak{p}}=C'_{\overline{\mathfrak{p}}}$ and $3 \mid C'_{q}=C'_{\mathfrak{q}}=C'_{\overline{\mathfrak{q}}}$. Thus from \eqref{eq:lowerbound_of_Sel_pi}, we have $\dim_{\FF_3} \Sel^{\phi}(E/K) \ge 4$.

Suppose that there is a prime $p$ such $p \mid (a-3)$ and $p \equiv 1 \pmod{3}$. Then there is at least one prime $q \neq p$ such that $q \mid (a^2+3a+9)$. By Theorem \ref{th:Hadano} and Lemma \ref{lem:torsionpoints_generating_component_group}, we have $3 \mid C'_{q}=C'_{\mathfrak{q}}=C'_{\bar{\mathfrak{q}}}$. Since $E'$ has split multiplicative reduction at $p$ and $3 \mid \ord_p(\Delta')=-\ord_p(j')$, where $\Delta'$ and $j'$ are the discriminant and the $j$-invariant of $E'$ respectively, we have $3 \mid C'_{p}=C'_{\mathfrak{p}}=C'_{\overline{\mathfrak{p}}}$ by \cite{AEC}, Appendix C, Corollary 15.2.1.
Thus from \eqref{eq:lowerbound_of_Sel_pi}, we have that $\dim_{\FF_3}\Sel^{\phi}(E/K) \geq 4$.

From the following short exact sequence of $G_K$-modules $0 \to E[\phi] \to  E[3] \xrightarrow{\phi} E'[\phi'] \rightarrow 0$, we have the following long exact sequence: $\cdots \rightarrow H^0(G_K, E'[\phi']) \rightarrow H^1(G_K, E[\phi]) \xrightarrow{\imath} H^1(G_K, E[3]) \rightarrow \cdots$. Since $E'(K)[\phi']=0$, $\imath$ is injective and thus $\dim_{\FF_3} \Sel^{3}(E/K) \geq \dim_{\FF_3} \Sel^{\phi}(E/K)$. Thus we conclude that
for the two cases,
\begin{equation}\label{eq:lowerbound_of_Sel_3}
\dim_{\FF_3} \Sel^{3}(E/K) \geq 4.
\end{equation}

From the condition that $E(K)$ has rank 1, we have $E(K)/3E(K) \simeq \ZZ/3\ZZ \oplus \ZZ/3\ZZ$. So the following descent exact sequence $0 \rightarrow E(K)/3E(K) \rightarrow \Sel^{3}(E/K) \rightarrow \Sh(E/K)[3] \rightarrow 0$ and equation \eqref{eq:lowerbound_of_Sel_3} imply that $\dim_{\FF_3} \Sh(E/K)[3] \geq 2$, and therefore $3 \mid \lp \# \Sh(E/K)[3] \rp^{1/2}$.
\end{proof}

\hfill

\noindent Dongho Byeon\\
Department of Mathematical Sciences, Seoul National University,\\
1 Gwanak-ro, Gwanak-gu, Seoul, South Korea,\\
E-mail: \url{dhbyeon@snu.ac.kr}\\

\noindent Taekyung Kim\\
Department of Mathematical Sciences, Seoul National University,\\
1 Gwanak-ro, Gwanak-gu, Seoul, South Korea,\\
E-mail: \url{Taekyung.Kim.Maths@gmail.com}\\

\noindent Donggeon Yhee\\
School of Mathematics and Statistics, University of Sheffield,\\
Room K28, Hicks Building, Hounsfield Road, Sheffield, S3 7RH, United Kingdom,\\
E-mail: \url{dgyhee@gmail.com}


\begin{thebibliography}{99}

\bibitem{ARS} A. Agashe, K. Ribet, W. Stein, \emph{The Manin constant}, Pure and Applied Mathematics Quarterly, \textbf{2} (2006), pp. 617 -- 636.


\bibitem{BY} D. Byeon, D. Yhee, \emph{Optimal curves differing by a 3-isogeny}, Acta Arithmetica, \textbf{158} (2013), 219 -- 227.

\bibitem{BK} D. Byeon, T. Kim, \emph{Optimal curves differing by a 5-isogeny}, Acta Arithmetica, \textbf{165} (2014),351 -- 359.

\bibitem{CCS} Z. Cao, C. I. Chu, W. C. Shiu, \emph{The exponential diophantine equation $AX^2 + BY^2 = \lambda k^Z$ and its applications}, Taiwanese Journal of Mathematics, \textbf{12} (2008), 1015 -- 1034.

\bibitem{Ca1}
J. W. S. Cassels, \emph{Arithmetic on curves of genus 1, IV, Proof of the Hauptvermutung}, Journal für die reine und angewandte Mathematik, \textbf{211} (1962), 95--112.

\bibitem{Ca2}
------------, \emph{Arithmetic on curves of genus 1, VIII, On the conjectures of Birch and Swinnerton-Dyer}, Journal für die reine und angewandte Mathematik, \textbf{217} (1965), 180 -- 189.

\bibitem{Cr} J. Cremona, \emph{Elliptic curve data}, available at \url{http://homepages.warwick.ac.uk/~masgaj/ftp/data/}.


\bibitem{DD}  T. Dokchitser, V. Dokchitser, \emph{Local invariants of isogenous elliptic curves}, Transactions of the American Mathematical Society, \textbf{367} (2015), 4339 -- 4358.

\bibitem{GJT}
E. González-Jiménez, J. M. Tornero, \emph{Torsion of rational elliptic curves over quadratic fields II}, Revista de la Real Academia de Ciencias Exactas, Físicas y Naturales, Serie A. Matemática, to appear.

\bibitem{Go} T. Goto, \emph{A study on the Selmer groups of elliptic curves with a rational 2-torsion}, Doctoral thesis, Kyushu University, 2002.

\bibitem{GZ} B. Gross, D. Zagier, \emph{Heegner points and derivatives of $L$-series}, Inventiones Mathematicae, \textbf{84} (1986), 225 -- 320.

\bibitem{Ha} T. Hadano, \emph{Elliptic curves with torsion point}, Nagoya Mathematical Journal, \textbf{66} (1977), 99 -- 108.

\bibitem{Ke} M. Kenku, \emph{On the number of $\QQ$-isomorphism classes of elliptic curves in each $\QQ$-isogeny class}, Journal of Number Theory, \textbf{15} (1982), 199 -- 202.

\bibitem{KS} R. Kloosterman, E. F. Schaefer, \emph{Selmer groups of elliptic curves that can be arbitrarily large}, Journal of Number Theory, \textbf {99} (2003), 148--163.

\bibitem{Kol} V. A. Kolyvagin, \emph{On the Mordell--Weil group and the Shafarevich--Tate group of modular elliptic curves}, Proceedings of the International Congress of Mathematicians, Kyoto, Japan (1990).

\bibitem{Kr} K. Kramer, \emph{Arithmetic of elliptic curves upon quadratic extension}, Transactions of American Mathematical Society, \textbf{264} (1981), 121 -- 135.

\bibitem{Ku} D. Kubert, \emph{Universal bounds on the torsion of elliptic curves}, Proceedings of London Mathematical Society, third series, \textbf{33} (1976), 193 -- 237.

\bibitem{Lo} D. Lorenzini, \emph{Torsion and Tamagawa numbers}, Annales de L'Institut Fourier, \textbf{61} (2011), 1995 -- 2037.

\bibitem{Ma72} B. Mazur, \emph{Rational points in abelian varieties with values in towers of number fields}, Inventiones Mathematicae, \textbf{18} (1972), 183 -- 266.

\bibitem{Ma77} ------------, \emph{Modular curves and the Eisenstein Ideal}, Publications Math\'ematiques de l'Institut Hautes \'Etudes Scientifiques, \textbf{47} (1977), 33 -- 186.

\bibitem{Ma78} ------------, \emph{Rational isogenies of prime degree}, Inventiones Mathematicae, \textbf{44} (1978), 129 -- 162.

\bibitem{MR} B. Mazur, M. Rapport, \emph{Behavior of the Néron model of the jacobian of $X_0(N)$ at bad primes}, appendix to \cite{Ma77}.

\bibitem{MO} J.-F. Mestre, J. Oesterlé, \emph{Courbes de Weil semi-stables de discriminant une puissance $m$-i\`{e}me}, Journal für die reine und angewandte Mathematik, \textbf{400} (1989), 173 -- 184.

\bibitem{MMR}
J. M. Miret, R. Moreno, A. Rio, \emph{Generalization of Vélu’s formulae for isogenies between elliptic curves}, Publicacions Matemàtiques (2007), 147--163.

\bibitem{Naj}
F. Najman, \emph{The number of twists with large torsion of an elliptic curve}, Revista de la Real Academia de Ciencias Exactas, Físicas y Naturales. Serie A. Matemáticas, to appear.

\bibitem{advAEC} J. Silverman, \emph{Advanced topics in the arithmetic of elliptic curves}, Graduate Texts in Mathematics \textbf{151}, Springer, 1994.

\bibitem{AEC} ------------, \emph{The arithmetic of elliptic curves}, 2nd ed., Graduate Texts in Mathematics \textbf{106}, Springer, 2009.

\bibitem{SW02} W. Stein, M. Watkins, \emph{A database of elliptic curves---first report}, in: Algorithmic Number Theory (Sydney, 2002), Lecture Notes in Computer Science, \textbf{2369}, Springer, 2002, 267 -- 275. Online from \url{http://modular.math.washington.edu/papers/stein-watkins/}.

\bibitem{SW04} ------------, \emph{Modular parametrizations of Neumann--Setzer elliptic curves}, International Mathematics Research Notices (2004), no. 27, 1395 -- 1405.

\bibitem{sagemath} W. Stein et al., \emph{Sage Mathematics Software (Version 6.1.1)}, The Sage Development Team, 2014, \url{http://www.sagemath.org}.

\bibitem{St} G. Stevens, \emph{Stickelberger elements and modular parametrizations of elliptic curves}, Inventiones Mathematicae, \textbf{98} (1989), 75--106.

\bibitem{Va} V. Vatsal, \emph{Multiplicative subgroup of $J_0(N)$ and applications to elliptic curves}, Journal de l'Institut de Mathématiques de Jussieu, \textbf{4} (2005), 281--316.


\end{thebibliography}
\end{document}